\title[Inverse intensity estimation]{Nonparametric intensity estimation from noisy observations of a Poisson process under unknown error distribution}
\author{Martin Kroll}
\thanks{Resarch for this article was performed while I was PhD student at Universit{\"a}t Mannheim. Financial support by the Deutsche Forschungsgemeinschaft (DFG) through the Research Training Group RTG 1953 is gratefully acknowledged. I am indepted to my supervisors Jan Johannes and Martin Schlather for fruitful discussions and helpful comments on the paper.}
\address{ENSAE-ParisTech CREST}
\email{martin.kroll@ensae.fr}
\date{\today}
\keywords{}
\begin{document}

\begin{abstract}
We consider the nonparametric estimation of the intensity function of a Poisson point process in a circular model from indirect observations $N_1,\ldots,N_n$.
These observations emerge from hidden point process realizations with the target intensity through contamination with additive error.
In case that the error distribution can only be estimated from an additional sample $Y_1,\ldots,Y_m$
we derive minimax rates of convergence with respect to the sample sizes $n$ and $m$ under abstract smoothness conditions and propose an orthonormal series estimator which attains the optimal rate of convergence.
The performance of the estimator depends on the correct specification of a dimension parameter whose optimal choice relies on smoothness characteristics of both the intensity and the error density.
We propose a data-driven choice of the dimension parameter based on model selection and show that the adaptive estimator attains the minimax optimal rate.
\end{abstract}

\maketitle

\section{Introduction}

Point process models are used in a wide variety of applications, including, amongst others, stochastic geometry~\cite{stoyan2013stochastic}, extreme value theory~\cite{resnick1987extreme}, and queueing theory~\cite{bremaud1981point}.
Each realization of a point process is a random set of points $\{ x_j \}$ which can alternatively be represented as an $\N_0$-valued random measure $\sum_{j} \delta_{x_j}$ where $\delta_\bullet$ denotes the Dirac measure concentrated at $\bullet$.
Poisson point processes (PPPs) are of particular importance since they serve as the elementary building blocks for more complex point process models.
Let $\X$ be a locally compact second countable Hausdorff space, $\Xs$ the corresponding Borel $\sigma$-field and $\Lambda$ a locally finite measure on the measurable space $(\X,\Xs)$, i.e., $\Lambda(C) < \infty$ for all relatively compact sets $C$ in $\Xs$.
A random set of points $N=\{ x_j \}$ from $\X$ (resp. the random measure $N=\sum_{j} \delta_{x_j}$) is called \emph{Poisson point process} with \emph{intensity measure} $\Lambda$ if \emph{(i)} the number $N_C = \vert N \cap C \vert$ of points located in $C$ follows a Poisson distribution with parameter $\Lambda(C)$ for all relatively compact $C \in \Xs$, and \emph{(ii)} for all $n \in \N$ and disjoint sets $A_1,\ldots,A_n \in \Xs$, the random variables $N_{A_1},\ldots,N_{A_n}$ are independent. 
It is well-known that the distribution of a PPP is completely determined by its intensity measure.
Hence, from a statistical point of view, the (nonparametric) estimation of the intensity measure or its Radon-Nikodym derivative (the \emph{intensity function}) with respect to some dominating measure from observations of the point process is of fundamental importance.

Inference and testing problems for Poisson and more general point processes have been tackled in a wide range of scenarios. The monographs~\cite{karr1991point} and~\cite{kutoyants1998statistical} offer a comprehensive overview and discuss both parametric and nonparametric methods.
From a methodological point of view, our approach in this paper is related to the article~\cite{reynaud2003adaptive} where the estimation of the intensity function from \emph{direct} observations was studied using concentration inequalities.

Other approaches to nonparametric intensity estimation from direct observations, without making a claim to be exhaustive, can be found in~\cite{baraud2009estimating} (where the performance of a histogram estimator under Hellinger loss is analysed),~\cite{birge2007model} (using a testing approach to model selection),~\cite{gregoire2000convergence} (using a minimum complexity estimator in the Aalen model), and~\cite{patil2004counting} (suggesting a wavelet estimator in the multiplicative intensity model).

Theoretical work on intensity estimation has recently been motivated by applications to genomic data.
The model considered in the article~\cite{bigot2013intensity} is motivated by data arising throughout the processing of DNA ChIP-seq data.
The article~\cite{sansonnet2014wavelet} takes its motivation from the analysis of genomic data as well.
In addition, let us mention two further articles where the development of nonparametric statistical methods for the analysis of point processes was inspired by applications from biology:
first, motivated through DNA sequencing techniques, the article~\cite{shen2012change} introduces a change-point model for nonhomogeneous Poisson processes occurring in molecular biology.
Second, the article~\cite{zhang2010nonparametric} considered the nonparametric inference of Cox process data by means of a kernel type estimator.

Usually one aims to estimate the intensity function $\lambda$ from \emph{direct} observations $\widetilde N_1,\ldots,\widetilde N_n$ where 
\begin{equation}\label{eq:gen:hidden}
\widetilde N_i = \sum_{j} \delta_{x_{ij}}
\end{equation}
are realizations of a PPP with the target intensity $\lambda$.
In this paper, however, we assume that we are interested in the nonparametric estimation of the intensity function $\lambda$ without having access to the observations in~\eqref{eq:gen:hidden}.
Instead, we are in the setup of a \emph{Poisson inverse problem}~\cite{antoniadis2006poisson} where we can only observe $N_1,\ldots,N_n$ given through
\begin{equation}\label{eq:gen:obs}
N_i = \sum_{j} \delta_{y_{ij}}.
\end{equation}
The \emph{indirect} observations $N_i$ are related to the hidden $\widetilde N_i$ by the identity $y_{ij} = x_{ij} + \epsilon_{ij} - \lfloor x_{ij} + \epsilon_{ij} \rfloor$.
The definition of the $y_{ij}$ as the fractional part of the additively contaminated $x_{ij}$ yields a circular model by means of the usual topological identification of the interval $[0,1)$ and the circle of perimeter $1$.

In contrast to our approach, the few existing papers on Poisson inverse problems \cite{cavalier2002poisson,antoniadis2006poisson,bigot2013intensity} assume the error distribution to be known.
This conservative assumption is also standard in research articles dealing with classical deconvolution problems~\cite{meister2009deconvolution}.
If the error density is unknown, even identifiability of the statistical model is not guaranteed.
Several remedies have been introduced to overcome this problem:
for instance, it is possible to impose additional assumptions on the statistical model (e.g.,~\cite{schwarz2010consistent} which deals with blind convolution under additive Gaussian noise with unknown variance).
Alternatively, one can consider a framework with panel data~\cite{neumann2007deconvolution}.
Finally, one can assume the availability of an additional sample from the error density (e.g.,~\cite{diggle1993fourier,johannes2009deconvolution,comte2010pointwise,comte2011data-driven}) to guarantee identifiability and enable inference.
In this paper, we will stick to this last option.

Let us assume that the errors $\epsilon_{ij}$ in the general model~\eqref{eq:gen:obs} are i.i.d.\ $\sim f$ for some \emph{unknown} error density $f$.
We will study the resulting model and consider the nonparametric estimation of the intensity function from observations
\begin{equation}\label{eq:obs}
N_1,\ldots,N_n  \quad \text{i.i.d.} \qquad \text{and} \qquad Y_1,\ldots,Y_m \quad \text{i.i.d.} \sim f
\end{equation}
where the $N_i$ are given as in~\eqref{eq:gen:obs}.
A natural aim here is to detect optimal rates of convergence in terms of the sample sizes $n$ and $m$ and to construct adaptive estimators attaining these rates.
Note that the observation of $n$ i.i.d.\,processes $N_1,\ldots,N_n$ with intensity $\lambda$ is equivalent to the observation of one process $N$ with intensity $n\lambda$, and both directions of this equivalence can easily be made rigorous.
In order to obtain $N$ from the $N_1,\ldots,N_n$, put $N = N_1 \cup \ldots \cup N_n$ (denoting by $\cup$ the set-theoretic union of point processes; this shows the infinite divisibility of Poisson point processes).
For the other direction, given $N$ and $n$, it suffices to assign every point $x \in N$ to one of the processes $N_1,\ldots,N_n$ with equal probability.

From a methodological point of view, our approach is inspired by the one conducted in~\cite{johannes2013adaptive}.
We consider orthonormal series estimators of the form
\begin{equation*}\label{eq:ons:fou:est}
\widehat \lambda_k = \sum_{0 \leq \vert j \vert \leq k} \widehat{\fou{\lambda}}_j \e_j
\end{equation*}
where $\e_j(\cdot) = \exp(2\pi ij \cdot)$ and $\widehat{\fou{\lambda}}_j$ is an appropriate estimator of the Fourier coefficient ${\fou{\lambda}}_j$ corresponding to the basis function $\e_j(\cdot)$ (see Section~\ref{s:meth} for details). Of course, this estimator is motivated by the $\LL^2$-convergent representation $\lambda = \sum_{j \in \Z} \fou{\lambda}_j \e_j$ for square-integrable $\lambda$.
It turns out that the performance of the estimator $\widehat \lambda_k$ crucially depends on the choice of the dimension parameter $k$ and that its optimal value depends on smoothness characteristics of the intensity that are usually not available in practice.
In order to choose $k$ in a completely data-driven manner, we follow an approach based on model selection (see \cite{barron1999risk,comte2015estimation}) and select the dimension parameter as the minimizer of a penalized contrast criterion.
For the theoretical analysis of the adaptive estimator we need Talagrand type concentration inequalities tailored to the framework with PPP observations which cannot be directly transferred from results applied in the usual density estimation or deconvolution frameworks (see Remark~2.2 in~\cite{kroll2016concentration}).
These inequalities have already been derived in a separate manuscript~\cite{kroll2016concentration}, and we only state the necessary consequences of these results in the appendix.
The article is organized as follows: in Section~\ref{s:meth} we introduce our methodological approach.
In Section~\ref{S:MINIMAX} we study the nonparametric estimation problem from a minimax point of view.
Section~\ref{S:ADAP} considers adaptive estimation of the intensity for the Poisson model.
Proofs are given in Section~\ref{s:proofs:minimax} and~\ref{s:proofs:adap:poisson}.
\section{Methodology}\label{s:meth}
\subsection{Notation}

Throughout this work we assume that the intensity $\lambda$ and the density $f$ belong to the space $\LL^2 = \LL^2([0,1), \dd x)$ of square-integrable functions on the interval $[0,1)$.
Let $\{\e_j \}_{j \in \Z}$ be the \emph{complex trigonometric basis} of $\LL^2$ given by $\e_j(t) =\exp(2\pi i j t)$.
The Fourier coefficients of a function $g \in \LL^2$ are denoted as follows:
\begin{equation*}
\fou{g}_j = \int_0^1 g(t) \e_j(-t)\dd t.
\end{equation*}
For a strictly positive symmetric sequence $\omega=\seqomega$ we introduce the weighted norm $\lVert \cdot \rVert_\omega$ defined via $\lVert g \rVert_\omega^2 = \sum_{j \in \Z} \omega_j \lvert \fou{g}_j \rvert^2$.
The corresponding scalar product is denoted with $\langle g,h \rangle_\omega = \sum_{j \in \Z} \omega_j \fou{g}_j \conj{\fou{h}}_j$.
Throughout the paper, we use the notation $a(n,m) \lesssim b(n,m)$ if $a(n,m) \leq C \cdot b(n,m)$ for some numerical constant $C$ independent of $n$ and $m$.
\subsection{The minimax point of view}
We evaluate the performance of an arbitrary estimator $\widetilde \lambda$ of $\lambda$ by means of the mean integrated weighted squared loss $\E [\Vert \widetilde \lambda - \lambda\Vert_\omega^2]$.
We take up the minimax point of view and consider the \emph{maximum risk} defined by
\begin{equation}\label{eq:risk}
	\sup_{\lambda \in \Lambda} \sup_{f \in \Fc} \E [\Vert \widetilde \lambda - \lambda\Vert_\omega^2]
\end{equation}
where $\Lambda$ and $\Fc$ are classes of potential intensity functions $\lambda$ and densities $f$, respectively.
The \emph{minimax risk} is defined via
\begin{equation*}
	\inf_{\widetilde \lambda} \sup_{\lambda \in \Lambda} \sup_{f \in \Fc} \E [\Vert \widetilde \lambda - \lambda\Vert_\omega^2]
\end{equation*}
where the infimum is taken over all estimators $\widetilde \lambda$ of $\lambda$.
An estimator $\lambda^\ast$ is called \emph{rate optimal} if
\begin{equation*}
	\sup_{\lambda \in \Lambda} \sup_{f \in \Fc} \E [\Vert \lambda^\ast - \lambda\Vert_\omega^2]	\lesssim \inf_{\widetilde \lambda} \sup_{\lambda \in \Lambda} \sup_{f \in \Fc} \E [\Vert \widetilde \lambda - \lambda\Vert_\omega^2].
\end{equation*}
By allowing for general weight sequences $\omega$, we can treat both the estimation of $\lambda$ (in this case, $\omega \equiv 1$) as well as the estimation of derivatives (take $\omega_j = j^{2s}$ for $\vert j \vert \geq 1$ for the $s$-th derivative).
The classes $\Lambda$ of intensity functions and $\Fc$ of densities to be considered in this article will be specified in Section~\ref{S:MINIMAX} below where we derive lower bounds on the minimax risk for these specific choices and prove that this lower bound is attained up to a numerical constant by a suitably defined orthonormal series estimator.

\subsection{Sequence space representation}

Under the considered model, the observed point processes $N_1,\ldots,N_n$ in~\eqref{eq:obs} are generated from independent Poisson point processes $\widetilde N_1,\ldots,\widetilde N_n$ with intensity function $\lambda$ by independent random contaminations of the individual points.
We emphasize again that the (unobserved) contaminations are assumed to follow a probability law given by an unknown density $f$ and are to be understood additively modulo $1$.
Thus, the observations $N_i$ under the Poisson model are given by
\begin{equation*}
	N_i = \sum_j \delta_{x_{ij} + \epsilon_{ij} - \lfloor x_{ij} + \epsilon_{ij} \rfloor} 
\end{equation*}
where $\widetilde N_i = \sum_j \delta_{x_{ij}}$ is the realization of a Poisson point process with intensity function $\lambda$ and the errors $\epsilon_{ij}$ are i.i.d. $\sim f$.
Note that each $N_i$ is again a realization of a Poisson point process whose intensity function is given by the circular convolution $\lambda \star f$ modulo 1 of $\lambda$ with the error density $f$. More precisely, $\ell = \lambda \star f$ is given by the formula

\begin{equation*}\label{eq:ell}
	\ell(t) = \int_0^1 \lambda((t-\epsilon) - \lfloor t-\epsilon \rfloor) f(\epsilon)\dd \epsilon, \qquad t \in [0,1).
\end{equation*}
By the convolution theorem, we have $\fou{\ell}_j = \fou{\lambda}_j \cdot \fou{f}_j$ for all $j \in \Z$.
From Campbell's theorem (cf.~\cite{serfozo2009basics}, Chapter~3, Theorem~24) it can be deduced that for measurable functions $g$ we have
$\E [ \int_0^1 g(t)\dd N_i(t) ] = \int_0^1 \ell(t)g(t) \dd t$
provided that the integral on the right-hand side exists.
Exploiting this equation for $g(t)=\e_j(-t)$ and setting
\begin{equation}\label{eq:def:ellhat}
	\widehat{\fou{\ell}}_j = \frac 1 n \sum_{i=1}^n \int_0^1 \e_j(-t)\dd N_i(t)
\end{equation}
we thus obtain that $\E \widehat{\fou{\ell}}_j   = \fou{\lambda}_j \cdot \fou{f}_j$ for all $j \in \Z$.
More precisely, we have
\begin{equation}\label{eq:seq:I}
	\widehat{\fou{\ell}}_j = \fou{\lambda}_j\cdot \fou{f}_j + \xi_j \qquad \text{for all } j \in \Z
\end{equation}
where $\xi_j=\widehat{\fou{\ell}}_j - \E  \widehat{\fou{\ell}}_j =\frac 1 n \sum_{i=1}^{n} [\int_0^1 \e_j(-t)\dd N_i(t) - \int_0^1 \ell(t)\e_j(-t)\dd t ].$

\subsection{Orthonormal series estimator}
In view of~\eqref{eq:seq:I} and the fact that $\E \xi_j = 0$, a natural estimator of $\lambda$ is given by
\begin{equation}\label{eq:est}
	\widehat \lambda_k = \sum_{0 \leq \abs{j} \leq k} \frac{\widehat{\fou{\ell}}_j}{\widehat{\fou{f}}_j} \1_{\Omega_j}\e_j
\end{equation}
with $\widehat{\fou{\ell}}_j$ as defined in~\eqref{eq:def:ellhat}, $\widehat{\fou{f}}_j\defeq\frac{1}{m} \sum_{i=1}^{m} \e_j(-Y_i)$ and $\Omega_j\defeq \{ \vert \widehat{\fou{f}}_j \vert^2 \geq 1/m \}$.
Note that $\fou{f}_j$ in~\eqref{eq:seq:I} 
is not directly available and thus has to be estimated from the sample $Y_1,\ldots,Y_m$ in~\eqref{eq:obs}.
The additional threshold occurring in the definition of $\widehat \lambda_k$ through the indicator function over the set $\Omega_j$ compensates for 'too small' absolute values of $\widehat{\fou{f}}_j$ and is imposed in order to avoid unstable behaviour of the estimator.
The optimal choice $\knast$ of the dimension parameter in the minimax framework will be determined in Section~\ref{S:MINIMAX} and depends on the classes $\Lambda$ and $\Fc$.
The data-driven choice of the dimension parameter is discussed in Section~\ref{S:ADAP}.
\section{Minimax theory}\label{S:MINIMAX}

\subsection{Model assumptions}
Let $\gamma=(\gamma_j)_{j \in \Z}$ and $\alpha = (\alpha_j)_{j \in \Z}$ be strictly positive symmetric sequences and fix $r > 0, d \geq 1$.
In this section, we derive minimax rates of convergence concerning the maximum risk defined in~\eqref{eq:risk} with respect to the classes
\begin{equation*}
	\Lambda_\gamma^r \defeq \{ \lambda \in \LL^2 \colon \lambda \geq 0 \text{ and }   \sum_{j \in \Z} \gamma_j \vert \fou{\lambda}_j \vert^2 \eqdef \norm{\lambda}_\gamma^2 \leq r \}
\end{equation*}
and
\begin{equation*}
	\Fc_\alpha^d \defeq \{ f \in \LL^2 \colon f \geq 0, \fou{f}_0=1 \text{ and } d^{-1} \leq \abs{\fou{f}_j}^2 / \alpha_j \leq d \}
\end{equation*}
of intensity functions and error densities, respectively.
We now state some regularity conditions imposed on the sequences $\gamma$ and $\alpha$.

\begin{ass}\label{ass:seq}
	$\gamma = (\gamma_j)_{j \in \Z}$, $\alpha = (\alpha_j)_{j \in \Z}$ and $\omega = (\omega_j)_{j \in \Z}$ are strictly positive symmetric sequences such that $\gamma_0=\omega_0=\alpha_0=1$, $\gamma_j \geq 1$ for all $j \in \Z$ and the sequences $(\omega_n/\gamma_n)_{n \in \Nzero}$ and $(\alpha_n)_{n \in \Nzero}$ are both non-increasing.
	Finally, $\rho \defeq \sum_{j \in \mathbb Z} \alpha_j < \infty$.
\end{ass}

\subsection{Minimax lower bounds}

The following two theorems provide minimax lower bounds in terms of the sample sizes $n$ and $m$ in~\eqref{eq:obs}, respectively.
To state our results, we put
\begin{align*}
  \Psi_n &= \min_{k \in \N_0} \max \left\lbrace \frac{\omega_{k}}{\gamma_{k}}, \sum_{0 \leq \abs{j} \leq k} \frac{\omega_j}{n\alpha_j} \right\rbrace, \quad n \in \N,\\
  \Phi_m &= \max_{k \in \N} \left\lbrace \frac{\omega_k}{\gamma_k} \min \left(1,\frac{1}{m\alpha_k} \right) \right\rbrace, \quad m \in \N.
\end{align*}
By the results of this section, $\Psi_n$ and $\Phi_m$ will turn out to be the optimal (up to constants) rates of convergence  in terms of $n$ and $m$.
The two terms over which the maximum is taken in the definition of $\Psi_n$ can be interpreted as a squared bias term and a variance term, respectively.
The rate in $n$ should then be obtained by choosing the truncation value such that the maximum of these two terms is minimized.
This suggests to choose the truncation parameter as
\begin{equation}\label{EQ:DEF:KNAST}
k_n^\ast = \argmin_{k \in \N_0} \max \bigg\{ \frac{\omega_{k}}{\gamma_{k}}, \sum_{0 \leq \abs{j} \leq k} \frac{\omega_j}{n\alpha_j} \bigg\}.
\end{equation}
Our first theorem establishes a lower bound in terms of $n$.
\begin{thm}\label{THM:L:N}
	Let Assumption~\ref{ass:seq} hold, and further assume that
	\begin{enumerate}[label=(C\arabic*), leftmargin=1cm, itemsep=0em]
		\item\label{it:C1} $\Gamma\defeq\sum_{j \in \Z} \gamma_j^{-1} < \infty$, and
		\item $0 < \eta^{-1} = {\inf_{n \in \N} \Psi_n^{-1} \cdot \min \big\{ \frac{\omega_{k_n^\ast}}{\gamma_{k_n^\ast}}, \sum_{0 \leq \abs{j} \leq k_n^\ast} \frac{\omega_j}{n\alpha_j} \big\}}$
		for some $1 \leq \eta < \infty$.
	\end{enumerate}
	\noindent Then, for any $n \in \N$,
	\begin{equation*}
		\inf_{\widetilde \lambda} \sup_{\lambda \in \Lambda_\gamma^r} \sup_{f \in \Fc_\alpha^d} \E [ \Vert \widetilde \lambda- \lambda \Vert_\omega^2 ] \geq \frac{\zeta r}{16\eta} \cdot \Psi_n
	\end{equation*}
	where $\zeta=\min \{ \frac{1}{2\Gamma d \eta}, \frac{2\delta}{d\sqrt r} \}$ with $\delta = \frac{1}{2}-\frac{1}{2\sqrt 2}$, and the infimum is taken over all estimators $\widetilde \lambda$ of $\lambda$ based on the observations from~\eqref{eq:obs}.
\end{thm}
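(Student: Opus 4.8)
The plan is to derive the lower bound via a two-point (Le Cam) or, more robustly, a many-point (Assouad-type) argument applied simultaneously to the intensity $\lambda$ and — since the error density is unknown — potentially to $f$ as well. Since the rate $\Psi_n$ is governed by a bias/variance tradeoff at the single resolution level $k_n^\ast$, the natural construction is to perturb the null intensity $\lambda_0 \equiv 1$ (which lies in $\Lambda_\gamma^r$ because $\gamma_0 = 1$ and $r$ is assumed large enough, using condition~\ref{it:C1} via $\Gamma$) by a family of alternatives $\lambda_\theta = 1 + \sum_{0 < |j| \leq k_n^\ast} \theta_j c_j \e_j$ indexed by $\theta \in \{-1,+1\}^{k_n^\ast}$ (or a suitable subcube), with a common amplitude $c_j$ chosen so that each $\lambda_\theta$ stays nonnegative and inside $\Lambda_\gamma^r$, i.e.\ roughly $|c_j|^2 \asymp r / (\gamma_{k_n^\ast} \cdot k_n^\ast)$ after distributing the budget $r$ across the $\asymp k_n^\ast$ active coefficients, while the error density is held fixed at some $f \in \Fc_\alpha^d$ with $|\fou f_j|^2 \asymp \alpha_j$.

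The key steps, in order, are: \emph{(1)} Fix $f_0 \in \Fc_\alpha^d$ (e.g.\ take $\fou{(f_0)}_j$ with modulus $\sqrt{\alpha_j}$, arranged to give a nonnegative density — the assumption $\rho = \sum_j \alpha_j < \infty$ guarantees this is a legitimate $\LL^2$ function, and the normalization $\alpha_0 = 1$ gives $\fou{(f_0)}_0 = 1$). \emph{(2)} Build the cube of alternatives $\{\lambda_\theta\}$ above; verify $\lambda_\theta \geq 0$ and $\|\lambda_\theta\|_\gamma^2 \leq r$ — the latter is where the constant $\delta = \frac12 - \frac1{2\sqrt2}$ and the $2\delta/(d\sqrt r)$ term in $\zeta$ enter, controlling the amplitude so the $\LL^1$-perturbation is small enough for nonnegativity. \emph{(3)} Compute the separation in $\|\cdot\|_\omega$: for $\theta \neq \theta'$ differing in coordinate $j$, $\|\lambda_\theta - \lambda_{\theta'}\|_\omega^2 \gtrsim \omega_j |c_j|^2 \asymp (\omega_{k_n^\ast}/\gamma_{k_n^\ast})$ per coordinate (using monotonicity of $\omega_n/\gamma_n$), so the cube has pairwise separation matching the bias term $\omega_{k_n^\ast}/\gamma_{k_n^\ast}$ of $\Psi_n$. \emph{(4)} Bound the statistical affinity: under $\lambda_\theta$ the observations $N_i$ are Poisson processes with intensity $n(\lambda_\theta \star f_0)$, and the Fourier coefficients of $\lambda_\theta \star f_0$ at level $j$ have size $\asymp c_j \sqrt{\alpha_j}$; the Kullback–Leibler divergence (or Hellinger affinity) between the $\theta$- and $\theta'$-laws is then controlled — via the Poisson-process KL formula $\int (\ell_\theta - \ell_{\theta'} + \ell_{\theta'}\log(\ell_{\theta'}/\ell_\theta))$, expanded to second order since the perturbations are small relative to the baseline intensity $1$ — by $n \sum_{0<|j|\leq k_n^\ast} |c_j|^2 \alpha_j \asymp n \sum_j \omega_j/(n\alpha_j) \cdot (\text{const})$ after substituting $|c_j|^2 \asymp \omega_j/(\gamma_{k_n^\ast} \cdot \ldots)$; choosing the amplitude so this is $\lesssim 1$ forces exactly the variance-term balance $\sum_{|j|\leq k_n^\ast} \omega_j/(n\alpha_j) \asymp \omega_{k_n^\ast}/\gamma_{k_n^\ast}$, i.e.\ the definition of $k_n^\ast$. \emph{(5)} Apply Assouad's lemma to convert separation plus bounded per-coordinate affinity into the lower bound $\gtrsim k_n^\ast \cdot (\text{per-coord separation}) \cdot (\text{affinity lower bound})$; the extra factor $\Gamma = \sum \gamma_j^{-1}$ and the $1/(2\Gamma d\eta)$ piece of $\zeta$ appear because distributing the $\gamma$-budget over the coordinates and keeping the alternatives in $\Fc_\alpha^d$ (via $d$) costs these constants, and $\eta$ absorbs the (bounded, by hypothesis~(C2)) discrepancy between $\Psi_n$ and the single active term $\min\{\omega_{k_n^\ast}/\gamma_{k_n^\ast}, \sum \omega_j/(n\alpha_j)\}$ so that the final bound can be written cleanly in terms of $\Psi_n$.

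The main obstacle is step~\emph{(4)}: obtaining a clean second-order upper bound on the KL divergence between two Poisson-process laws whose intensities are multiplicative circular convolutions $n(\lambda_\theta \star f_0)$, while keeping explicit track of the constants $d$ (from $\Fc_\alpha^d$) and the amplitude calibration. One must be careful that the perturbation keeps $\ell_\theta = \lambda_\theta \star f_0$ bounded away from $0$ and $\infty$ uniformly (so the $\log$ is Taylor-expandable with a controlled remainder) — this is where nonnegativity of $\lambda_\theta$ and boundedness of $f_0$ are genuinely used — and that the cross term in the quadratic form, $\sum_j (\overline{\fou{(\lambda_\theta)}_j} - \overline{\fou{(\lambda_{\theta'})}_j})(\fou{(\lambda_\theta)}_j - \fou{(\lambda_{\theta'})}_j)|\fou{(f_0)}_j|^2$, collapses (by the $\{-1,+1\}$-structure) to a sum of per-coordinate contributions, so that Assouad applies coordinatewise rather than requiring a global affinity bound. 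Steps~\emph{(2)}, \emph{(3)}, \emph{(5)} are essentially bookkeeping with the monotonicity assumptions in Assumption~\ref{ass:seq}; the only subtlety there is verifying the cube is nonempty and the chosen $f_0$ actually exists in $\Fc_\alpha^d$, both of which follow from $\rho < \infty$ and the normalizations.
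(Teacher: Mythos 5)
Your overall strategy matches the paper's: build an Assouad-type cube of hypotheses $\lambda_\theta$ by perturbing a constant baseline in the Fourier coefficients $0 \le |j| \le k_n^\ast$, fix a single $f_0 \in \Fc_\alpha^d$ with $|\fou{(f_0)}_j|^2 \asymp \alpha_j$, and reduce to per-coordinate statistical distinguishability of the induced Poisson-process laws. However, there is a concrete error in the calibration of the amplitude in step~(2)/(4), and it is not a bookkeeping detail but the point where the rate comes from.

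You propose $|c_j|^2 \asymp r/(\gamma_{k_n^\ast} k_n^\ast)$, constant across $j$, and later substitute $|c_j|^2 \asymp \omega_j/(\gamma_{k_n^\ast}\cdots)$. Neither is the right scaling. The paper takes
\begin{equation*}
\lambda_\theta = \Bigl(\tfrac r 4 \Bigr)^{1/2} + \Bigl(\tfrac{r\zeta}{4n}\Bigr)^{1/2} \sum_{0 \le |j| \le k_n^\ast} \theta_{|j|}\, \alpha_j^{-1/2}\, \e_j,
\end{equation*}
i.e.\ $|c_j|^2 \asymp \zeta r/(n\alpha_j)$. The $\alpha_j^{-1/2}$ factor is essential: after convolution with $f_0$ the $j$-th observed Fourier coefficient is attenuated by $|\fou{(f_0)}_j| \asymp \sqrt{\alpha_j}$, so this choice makes $n\,|c_j|^2\,\alpha_j \asymp \zeta r$ \emph{constant} in $j$. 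Only then does each coordinate contribute a uniformly bounded amount to the (squared) Hellinger distance --- the paper gets $H^2(\nu_\theta,\nu_{\theta^{(|j|)}}) \le 1/n$ and hence, after tensorizing over the $n$ i.i.d.\ processes, an affinity $\rho(\P_\theta,\P_{\theta^{(|j|)}}) \ge 1/2$ for every $j$, which is exactly what Assouad needs. With your constant-in-$j$ amplitude the per-coordinate KL/Hellinger would scale like $n c\, \alpha_j$ and blow up at the smallest $|j|$ unless $c \lesssim 1/n$, which then gives the wrong separation. Simultaneously, the $\omega$-separation with the paper's amplitude sums to $\sum_{0\le|j|\le k_n^\ast} \omega_j |c_j|^2 \asymp \zeta r \sum_j \omega_j/(n\alpha_j)$, the \emph{variance} term of $\Psi_n$, which condition~(C2) makes comparable to $\Psi_n$ itself (your sketch attributes the match to the bias term; at $k_n^\ast$ the two are comparable, but the variance term is what the construction directly produces).

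Two smaller points. First, the baseline should be $(r/4)^{1/2}$, not $\lambda_0 \equiv 1$: with $(r/4)^{1/2}$ the norm budget $\|\lambda_\theta\|_\gamma^2 \le r$ and nonnegativity $\lambda_\theta \ge 0$ both hold for all $r>0$ once $\zeta$ is chosen as in the statement (this is where $\Gamma$ and $\delta$ enter the definition of $\zeta$), whereas $\lambda_0 \equiv 1$ requires an unstated lower bound on $r$. Second, the paper works with Hellinger affinity (via $\rho = 1 - \tfrac12 H^2$ together with Reiss's tensorization bound and his Hellinger bound for Poisson point processes) rather than KL; both can be made to work, but Hellinger tensorizes additively over independent observations and gives a clean lower bound on the affinity, avoiding the Taylor-remainder control you flag as your ``main obstacle'' in step~(4).
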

As the proof Theorem~\ref{THM:L:N} shows, the lower bound $\Psi_n$, which does not depend on the sample size of the auxiliary sample from the error density, is valid also in case of a known error density.
The potential deterioration of the overall rate of convergence in contrast to this case is introduced by the uncertainty concerning the error density $f$.
Since this uncertainty is quantified by the sample size $m$, one would expect a dependence of the lower bound on $m$ as well.
This intuition is made rigorous by means of the following theorem.

\begin{thm}\label{THM:L:M}
	Let Assumption~\ref{ass:seq} hold, and in addition assume that
	\begin{enumerate}[label=(C\arabic*), leftmargin=1.5cm]
		\setcounter{enumi}{2}
		\item\label{it:C3} there exists a density $f$ in $\Fc_\alpha^{\sqrt d}$ with $f \geq 1/2$.
	\end{enumerate}
	\noindent Then, for any $m \in \N$,
	\begin{equation*}\label{eq:lb:m}
	\inf_{\widetilde \lambda} \sup_{\lambda \in \Lambda_\gamma^r} \sup_{f \in \Fc_\alpha^d} \E [ \Vert \widetilde \lambda- \lambda \Vert_\omega^2 ] \geq \frac{1-\sqrt 3/2}{8} \cdot \zeta^2rd^{-1/2}\cdot \Phi_m
	\end{equation*}
	where $\zeta=\min \{ 1/(4\sqrt d), 1-d^{-1/4} \}$ and the infimum is taken over all estimators $\widetilde \lambda$ of $\lambda$ based on the observations from~\eqref{eq:obs}.
\end{thm}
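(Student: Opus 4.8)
\textbf{Proof strategy for Theorem~\ref{THM:L:M}.}

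The plan is to argue by a two-point (Le Cam) construction; since the target rate $\Phi_m$ is a single term rather than a sum, two hypotheses suffice. Let $k^\ast\in\N$ be a (near-)maximiser in the definition of $\Phi_m$ and write $a=\alpha_{k^\ast}$. The decisive idea is to pick the two hypotheses $(\lambda_0,f_0)$ and $(\lambda_1,f_1)$ so that the \emph{convolved} intensities agree, $\lambda_0\star f_0=\lambda_1\star f_1=:\ell$. Then, under either hypothesis, the observed $N_1,\dots,N_n$ are i.i.d.\ Poisson point processes with the \emph{same} intensity $\ell$ (because $\fou{\ell}_j=\fou{\lambda}_j\fou{f}_j$, cf.\ Section~\ref{s:meth}) and are independent of the auxiliary sample; hence the $N$-sample is statistically useless and the two models can be separated only through the $m$ observations $Y_1,\dots,Y_m\sim f$.

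\emph{Construction of the hypotheses.} Fix, by~\ref{it:C3}, a density $f_0\in\Fc_\alpha^{\sqrt d}$ with $f_0\ge 1/2$ and set $f_1=f_0+\tau(\e_{k^\ast}+\e_{-k^\ast})$ for $0<\tau\le c(d)\min\{\sqrt a,\,m^{-1/2}\}$ with $c(d)$ small. Then $\fou{f_1}_j=\fou{f_0}_j$ for $j\ne\pm k^\ast$, $\fou{f_1}_0=1$, $f_1$ is real with $\|f_1-f_0\|_\infty\le 2\tau$, so $f_1\ge 0$; and since~\ref{it:C3} leaves a factor $d^{1/4}$ of room on each side of the constraint defining $\Fc_\alpha^d$, the bound $\tau\lesssim\sqrt a$ (with a suitable $c(d)$) forces $d^{-1}\le|\fou{f_1}_{k^\ast}|^2/a\le d$, i.e.\ $f_1\in\Fc_\alpha^d$. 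On the intensity side choose $\lambda_0(t)=a_0+2\,\mathrm{Re}\!\big(b\,\e_{k^\ast}(t)\big)$ with $a_0^2+2\gamma_{k^\ast}|b|^2=r$ and $|b|^2=\kappa r/\gamma_{k^\ast}$ for a small numerical $\kappa$; as $\gamma_{k^\ast}\ge 1$ this gives $\lambda_0\in\Lambda_\gamma^r$ and $\lambda_0\ge a_0-2|b|>0$. Finally put $\fou{\lambda_1}_j=\fou{\lambda_0}_j\fou{f_0}_j/\fou{f_1}_j$, so that $\fou{\lambda_1}_j\fou{f_1}_j=\fou{\ell}_j$ and $\lambda_1\star f_1=\ell$; here $\lambda_1=\lambda_0$ off $\pm k^\ast$ and $|\fou{\lambda_1}_{k^\ast}|\le 2|b|$ because $\tau\le|\fou{f_0}_{k^\ast}|/2$, so $\lambda_1\in\Lambda_\gamma^r$ and $\lambda_1\ge 0$ after adjusting $\kappa$. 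By Parseval,
\[
\|\lambda_1-\lambda_0\|_\omega^2=2\,\omega_{k^\ast}\,|\fou{\lambda_0}_{k^\ast}|^2\,\frac{\tau^2}{|\fou{f_1}_{k^\ast}|^2}\ \asymp\ r\,\frac{\omega_{k^\ast}}{\gamma_{k^\ast}}\cdot\frac{\tau^2}{a}\ \asymp\ r\,\frac{\omega_{k^\ast}}{\gamma_{k^\ast}}\min\!\Big(1,\frac1{ma}\Big)=r\,\Phi_m,
\]
where the last step uses $\tau^2\asymp\min(a,1/m)$, and the hidden constants can be tracked to produce the factor $\zeta^2 d^{-1/2}$ of the theorem.

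\emph{Testing bound and conclusion.} Let $P_0,P_1$ be the joint laws of $(N_1,\dots,N_n,Y_1,\dots,Y_m)$ under the two hypotheses. Since the $N$-block is, under \emph{both} $P_0$ and $P_1$, a vector of $n$ i.i.d.\ Poisson point processes with intensity $\ell$, and is independent of the $Y$-block, the squared Hellinger distance factorises and the $N$-block drops out:
\[
H^2(P_0,P_1)=H^2\big(f_0^{\otimes m},f_1^{\otimes m}\big)\le m\,H^2(f_0,f_1)\le m\int_0^1\frac{(f_1-f_0)^2}{f_0}\le 2m\,\|f_1-f_0\|_{\LL^2}^2=4m\tau^2 ,
\]
using $f_0\ge 1/2$ and $\|\tau(\e_{k^\ast}+\e_{-k^\ast})\|_{\LL^2}^2=2\tau^2$. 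Choosing $c(d)$ small enough makes this at most $3/4$, whence $\mathrm{TV}(P_0,P_1)\le H(P_0,P_1)\sqrt{1-H^2(P_0,P_1)/4}\le\sqrt 3/2$. Le Cam's two-point inequality then yields a lower bound of order $(1-\mathrm{TV}(P_0,P_1))\,\|\lambda_1-\lambda_0\|_\omega^2$ for the maximum risk (the suprema over $\Lambda_\gamma^r$ and $\Fc_\alpha^d$ being legitimate since both hypotheses lie in these classes), and inserting the two estimates above together with bookkeeping of the numerical constants gives the asserted $\tfrac{1-\sqrt 3/2}{8}\,\zeta^2 r d^{-1/2}\,\Phi_m$.

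\emph{Main obstacle.} The only genuinely delicate point is to reconcile the three competing requirements on the perturbation size $\tau$: smallness relative to $|\fou{f_0}_{k^\ast}|\asymp\sqrt a$ (for $\lambda_1\ge 0$ and $|\fou{\lambda_1}_{k^\ast}|\asymp|\fou{\lambda_0}_{k^\ast}|$), compatibility with the $d^{1/4}$-buffer of~\ref{it:C3} (for $f_1\in\Fc_\alpha^d$), and $m\tau^2=O(1)$ (for the Hellinger distance to stay bounded). One must check that a single choice $\tau\asymp\zeta\min\{\sqrt a,m^{-1/2}\}$ with $\zeta=\min\{1/(4\sqrt d),\,1-d^{-1/4}\}$ satisfies all three, and treat separately the regime $ma\le 1$ — where the cap $\sqrt a$ binds and the $k^\ast$-term of $\Phi_m$ reduces to $\omega_{k^\ast}/\gamma_{k^\ast}$ — and the regime $ma>1$. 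Everything else (membership in the classes and the Parseval computation of the separation) is routine.
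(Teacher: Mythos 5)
Your proposal matches the paper's proof in all essential respects: a two-point reduction, hypotheses $(\lambda_0,f_0)$, $(\lambda_1,f_1)$ chosen so that $\lambda_0\star f_0=\lambda_1\star f_1$ (hence the $N$-sample is uninformative), a Hellinger bound $H^2(P_0,P_1)\le mH^2(f_0,f_1)\le 1$ using $f_0\ge1/2$, and a two-point testing lemma (the paper invokes Tsybakov, Theorem~2.2(ii)). The paper's construction is the $\theta\in\{\pm1\}$ version of yours, perturbing $\fou{f_\theta}_{k_m^*}$ multiplicatively by $(1+\theta a_m)$ and $\fou{\lambda_\theta}_{k_m^*}$ by $(1-\theta a_m)$ so that the product is $\theta$-free, which amounts to your additive perturbation $\tau=a_m\fou{f}_{k_m^*}\asymp\zeta\min(\sqrt a,m^{-1/2})$; the $d^{1/4}$-buffer left by~\ref{it:C3} is used in exactly the way you describe to keep $f_\theta\in\Fc_\alpha^d$.
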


The next corollary is an immediate consequence of Theorems~\ref{THM:L:N} and~\ref{THM:L:M}.

\begin{cor}
	Under the assumptions of Theorems~\ref{THM:L:N} and~\ref{THM:L:M}, for any $n,m \in \N$,
	\begin{align*}
	\inf_{\widetilde \lambda} \sup_{\lambda \in \Lambda_\gamma^r} \sup_{f \in \Fc_\alpha^d} \E [ \Vert \widetilde \lambda- \lambda \Vert_\omega^2 ] &\geq \max \bigg\{ \frac{\zeta r}{16\eta} \cdot \Psi_n, \frac{1-\sqrt 3/2}{8} \cdot \zeta^2rd^{-1/2}\cdot \Phi_m  \bigg\}\\
	&\geq \frac 1 2 \bigg\{ \frac{\zeta r}{16\eta} \cdot \Psi_n + \frac{1-\sqrt 3/2}{8} \cdot \zeta^2rd^{-1/2}\cdot \Phi_m  \bigg\}.
	\end{align*}
\end{cor}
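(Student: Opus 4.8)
The plan is to combine the two lower bounds already established, since the hypotheses of the corollary are precisely the union of those of Theorem~\ref{THM:L:N} and Theorem~\ref{THM:L:M}, and both conclusions therefore apply simultaneously to the same quantity $\inf_{\widetilde\lambda}\sup_{\lambda\in\Lambda_\gamma^r}\sup_{f\in\Fc_\alpha^d}\E[\Vert\widetilde\lambda-\lambda\Vert_\omega^2]$. First I would record that, by Theorem~\ref{THM:L:N}, this quantity is bounded below by $\frac{\zeta r}{16\eta}\Psi_n$, and that, by Theorem~\ref{THM:L:M}, it is bounded below by $\frac{1-\sqrt 3/2}{8}\zeta^2rd^{-1/2}\Phi_m$ (with $\zeta$ read as the constant of the respective theorem). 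A single number that is an upper bound for two quantities is also an upper bound for their maximum, which immediately yields the first inequality of the corollary.

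For the second inequality I would invoke the elementary arithmetic fact that $\max\{a,b\}\geq\tfrac12(a+b)$ for all real $a,b\geq0$, applied with $a=\frac{\zeta r}{16\eta}\Psi_n$ and $b=\frac{1-\sqrt 3/2}{8}\zeta^2rd^{-1/2}\Phi_m$. Both are nonnegative: $r>0$, $d\geq1$, $\eta\geq1$, the rates $\Psi_n$ and $\Phi_m$ are nonnegative by construction, the $\zeta$'s are positive, and $1-\sqrt 3/2>0$. This produces exactly the stated lower bound by the sum of the two rates.

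There is essentially no obstacle here; the corollary is a formal consequence of the two preceding theorems together with a one-line inequality. The only point deserving a word of care is purely notational: the symbol $\zeta$ has different meanings in Theorem~\ref{THM:L:N} and in Theorem~\ref{THM:L:M}, so in the statement of the corollary each occurrence of $\zeta$ must be understood as the constant furnished by the theorem from which the corresponding term arises (alternatively, one may replace both by the smaller of the two and weaken the numerical constants). Once this convention is fixed, the two displayed inequalities follow immediately and the proof is complete.
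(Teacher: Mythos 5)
Your proposal is correct and matches the paper's intent exactly: the paper explicitly calls the corollary an immediate consequence of Theorems~\ref{THM:L:N} and~\ref{THM:L:M}, and your two steps (take the maximum of the two lower bounds, then use $\max\{a,b\}\geq\tfrac12(a+b)$ for $a,b\geq0$) are the only argument needed. Your remark about the overloaded symbol $\zeta$ is a fair notational observation that the paper leaves implicit.
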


Note that the contributions of the sample sizes $n$ and $m$ to the overall lower bound are separated from another, and the rate is determined by the maximum of $\Psi_n$ and $\Phi_m$.
This phenomenon has already been observed in the related problem of density estimation \cite{johannes2009deconvolution,comte2011data-driven,johannes2013adaptive} and other inverse problems with unknown operator \cite{delattre2012blockwise,johannes2013gaussian}.
In addition, it can be seen from the mere definition of $\Psi_n$ and $\Phi_m$ that the rate in terms of $m$ is always faster then the one in $n$.
Hence, as long as $m \geq n$, there is no deterioration in the rate in comparison to the setup with known error density (see Table~\ref{tab:rates} for a more detailed evaluation of the rates in some special cases).

\subsection{Upper bound}

Let us now establish an upper bound for the maximum risk in terms of $n$ and $m$ for the estimator $\widehat \lambda_k$ in~\eqref{eq:est} under a suitable choice of the dimension parameter $k$.
More precisely, the following theorem establishes an upper bound for the rate of convergence of $\widehat \lambda_{\knast}$ with $k_n^*$ defined in Equation~\eqref{EQ:DEF:KNAST}. 
Thus, due to the lower bound proofs in the preceding subsection it is shown that $\widehat \lambda_{k_n^*}$ attains the minimax rates of convergence in terms of the samples sizes $n$ and $m$. 
Note that this rate optimal choice $\knast$ of the dimension parameter does not depend on the sample size $m$ (recall Equation~\eqref{EQ:DEF:KNAST} for its definition, and note that none of the quantities appearing there depends on $m$).
The non-dependence of the rate-optimal smoothing parameter can also been observed in the related model of circular density deconvolution with unknown error density considered in \cite{johannes2013adaptive}.

\begin{thm}\label{THM:U:NM:I:II}
	Let Assumption~\ref{ass:seq} hold.
	Then, for any $n,m \in \N$,
	\begin{equation*}
	\sup_{\lambda \in \Lambda_\gamma^r} \sup_{f \in \Fc_\alpha^d} \E [ \Vert \widehat \lambda_\knast - \lambda \Vert_\omega^2 ] \lesssim \Psi_n + \Phi_m.
	\end{equation*}
\end{thm}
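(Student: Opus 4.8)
The plan is to perform a bias--variance analysis of $\widehat\lambda_\knast$ in the Fourier domain. Writing $\widehat{\fou{\lambda}}_j\defeq\widehat{\fou{\ell}}_j\,\widehat{\fou{f}}_j^{-1}\1_{\Omega_j}$, the very definition of $\Vert\cdot\Vert_\omega$ gives the exact decomposition
\[
\E[\Vert\widehat\lambda_\knast-\lambda\Vert_\omega^2]=\sum_{\abs{j}>\knast}\omega_j\abs{\fou{\lambda}_j}^2+\sum_{0\le\abs{j}\le\knast}\omega_j\,\E[\abs{\widehat{\fou{\lambda}}_j-\fou{\lambda}_j}^2].
\]
For the first (bias) term I would use $\lambda\in\Lambda_\gamma^r$ together with the monotonicity of $(\omega_n/\gamma_n)_n$ from Assumption~\ref{ass:seq}, so that $\sum_{\abs{j}>\knast}\omega_j\abs{\fou{\lambda}_j}^2\le(\omega_\knast/\gamma_\knast)\sum_{\abs{j}>\knast}\gamma_j\abs{\fou{\lambda}_j}^2\le r\,\omega_\knast/\gamma_\knast\le r\,\Psi_n$, where the last inequality is precisely the definition~\eqref{EQ:DEF:KNAST} of $\knast$ (the minimum there equals the corresponding maximum).

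For the stochastic term, fix $j$ with $0\le\abs{j}\le\knast$ and put $\eta_j\defeq\widehat{\fou{f}}_j-\fou{f}_j$. From~\eqref{eq:seq:I} one has $\widehat{\fou{\ell}}_j-\fou{\lambda}_j\widehat{\fou{f}}_j=\xi_j-\fou{\lambda}_j\eta_j$, hence $\widehat{\fou{\lambda}}_j-\fou{\lambda}_j=(\xi_j-\fou{\lambda}_j\eta_j)/\widehat{\fou{f}}_j$ on $\Omega_j$ and $\widehat{\fou{\lambda}}_j-\fou{\lambda}_j=-\fou{\lambda}_j$ on $\Omega_j^c$, so that
\[
\E[\abs{\widehat{\fou{\lambda}}_j-\fou{\lambda}_j}^2]\le 2\,\E\Big[\tfrac{\abs{\xi_j}^2}{\abs{\widehat{\fou{f}}_j}^2}\1_{\Omega_j}\Big]+2\abs{\fou{\lambda}_j}^2\,\E\Big[\tfrac{\abs{\eta_j}^2}{\abs{\widehat{\fou{f}}_j}^2}\1_{\Omega_j}\Big]+\abs{\fou{\lambda}_j}^2\,\Pr(\Omega_j^c).
\]
The first contribution factorizes since the samples $(N_i)$ and $(Y_i)$ are independent, so it equals $\E[\abs{\xi_j}^2]\cdot\E[\abs{\widehat{\fou{f}}_j}^{-2}\1_{\Omega_j}]$; here the second-moment (Campbell) formula for Poisson integrals gives $\E[\abs{\xi_j}^2]=n^{-1}\fou{\ell}_0=n^{-1}\fou{\lambda}_0\le\sqrt r/n$ uniformly in $j$, while splitting on $\{\abs{\eta_j}\le\abs{\fou{f}_j}/2\}$ and using $\abs{\fou{f}_j}^2\ge\alpha_j/d$, $\E[\abs{\eta_j}^2]\le m^{-1}$, $\abs{\widehat{\fou{f}}_j}^{-2}\le m$ on $\Omega_j$ and Chebyshev's inequality yields $\E[\abs{\widehat{\fou{f}}_j}^{-2}\1_{\Omega_j}]\lesssim\alpha_j^{-1}$. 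Summed over $0\le\abs{j}\le\knast$, this contribution is therefore $\lesssim\sqrt r\sum_{0\le\abs{j}\le\knast}\omega_j/(n\alpha_j)\le\sqrt r\,\Psi_n$. For the last contribution, the same splitting and Chebyshev's inequality give $\Pr(\Omega_j^c)\lesssim\min\{1,(m\alpha_j)^{-1}\}$ for $j\ne0$, whereas $\Pr(\Omega_0^c)=0$ since $\widehat{\fou{f}}_0=1$ (and, likewise, $\eta_0=0$, so the middle contribution vanishes at $j=0$).

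The technical heart of the argument, and the step I expect to be the main obstacle, is the middle contribution $\abs{\fou{\lambda}_j}^2\,\E[\abs{\eta_j}^2\abs{\widehat{\fou{f}}_j}^{-2}\1_{\Omega_j}]$, because there $\eta_j$ and $\widehat{\fou{f}}_j$ are built from the same sample and are thus dependent. Splitting once more on $\{\abs{\eta_j}\le\abs{\fou{f}_j}/2\}$: on this event $\abs{\widehat{\fou{f}}_j}\ge\abs{\fou{f}_j}/2$, so the corresponding piece is at most $4\abs{\fou{f}_j}^{-2}\E[\abs{\eta_j}^2]\lesssim(m\alpha_j)^{-1}$; on its complement $1\le4\abs{\eta_j}^2\abs{\fou{f}_j}^{-2}$, so after bounding $\abs{\widehat{\fou{f}}_j}^{-2}\le m$ on $\Omega_j$ the piece is controlled by $m\abs{\fou{f}_j}^{-2}\E[\abs{\eta_j}^4]\lesssim(m\alpha_j)^{-1}$, using $\E[\abs{\eta_j}^4]\lesssim m^{-2}$ (valid because $\eta_j$ is an average of $m$ i.i.d., centred summands bounded by $2$). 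Together with the trivial bound $\E[\abs{\eta_j}^2\abs{\widehat{\fou{f}}_j}^{-2}\1_{\Omega_j}]\le m\,\E[\abs{\eta_j}^2]\le1$, this gives $\E[\abs{\eta_j}^2\abs{\widehat{\fou{f}}_j}^{-2}\1_{\Omega_j}]\lesssim\min\{1,(m\alpha_j)^{-1}\}$. It then remains to collect the middle and last contributions: for $j\ne0$ write $\omega_j\abs{\fou{\lambda}_j}^2\min\{1,(m\alpha_j)^{-1}\}=\gamma_j\abs{\fou{\lambda}_j}^2\cdot(\omega_j/\gamma_j)\min\{1,(m\alpha_j)^{-1}\}$, and since $\sum_j\gamma_j\abs{\fou{\lambda}_j}^2\le r$ and $\sup_{\abs{j}\ge1}(\omega_j/\gamma_j)\min\{1,(m\alpha_j)^{-1}\}=\Phi_m$, they sum to $\lesssim r\,\Phi_m$. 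Adding the bias bound and the three stochastic contributions, and absorbing the fixed constants $r$ and $d$ into $\lesssim$, gives $\sup_{\lambda\in\Lambda_\gamma^r}\sup_{f\in\Fc_\alpha^d}\E[\Vert\widehat\lambda_\knast-\lambda\Vert_\omega^2]\lesssim\Psi_n+\Phi_m$, which is the claim.
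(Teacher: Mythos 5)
Your proof is correct and follows essentially the same route as the paper's: a bias--variance decomposition in the Fourier domain, with $\var(\widehat{\fou{\ell}}_j)=\fou{\lambda}_0/n$ controlling the term that yields $\Psi_n$, and the $\Omega_j$-threshold plus fourth-moment/Chebyshev bounds on $\widehat{\fou{f}}_j$ yielding the $\Phi_m$ term. The only cosmetic differences are that the paper first introduces an intermediate object $\widetilde\lambda_{\knast}=\sum_{|j|\le\knast}\fou{\lambda}_j\1_{\Omega_j}\e_j$ rather than decomposing coefficient by coefficient, and where the paper invokes the algebraic inequality $|a|^2\le2|a-1|^2+2$ with $a=\fou f_j/\widehat{\fou f}_j$ together with $|\widehat{\fou f}_j|^{-2}\le m$ on $\Omega_j$, you split on the event $\{|\eta_j|\le|\fou f_j|/2\}$; both lead to the same bounds and rely on the same ingredients ($|\widehat{\fou f}_j|^{-2}\le m$ on $\Omega_j$, Chebyshev, $\E[|\eta_j|^4]\lesssim m^{-2}$).
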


\subsection{Examples of convergence rates}\label{subs:minimax:rates}

Fixing $\omega_0= 1$, $\omega_j=\vert j \vert^{2s}$ for $j \neq 0$ and some $s \geq 0$, we consider specific choices of the sequences $\gamma$ and $\alpha$ and state the resulting rates with respect to both sample sizes $n$ and $m$. 

\paragraph{Choices for the sequence $\gamma$:} 
\begin{itemize}
	\item {\color{red}(pol)}: $\gamma_0=0$ and $\gamma_j = \vert j \vert^{2p}$ for all $j \neq 0$ and some $p \geq 0$. This corresponds to the case when the unknown intensity function belongs to some \emph{Sobolev space}.
	\item {\color{red}(exp)}: $\gamma_j = \exp(2p\vert j \vert)$ for all $j \in \Z$ and some $p \geq 0$. In this case, $\lambda$ belongs to some space of \emph{analytic functions}.
\end{itemize}
\paragraph{Choices for the sequence $\alpha$:} 
\begin{itemize}
	\item {\color{blue}(pol)}: $\alpha_0=0$ and $\alpha_j = \vert j \vert^{-2a}$ for all $j \neq 0$ and some $a \geq 0$. This corresponds to the case when the error density is \emph{ordinary smooth}.
	\item {\color{blue}(exp)}: $\alpha_j = \exp(-2a\vert j \vert)$ for all $j \in \Z$ and some $a \geq 0$. 
\end{itemize}

Table~\ref{tab:rates} summarizes the rates $\Psi_n$ and $\Phi_m$ for the different choices of $\gamma$ and $\alpha$.
The rates in terms of $n$ coincide formally with the classical rates for nonparametric inverse problems (see \cite{fan1991optimal,lacour2006rates}, for instance).
The rates in $m$ are of the same order as those that have already been obtained in the related model of (circular) density deconvolution with unknown error density in \cite{johannes2009deconvolution,comte2011data-driven,johannes2013adaptive}.
They can also be compared with the rates in the indirect Gaussian sequence model with partially known operator \cite{johannes2013gaussian}, which provides a benchmark model for a variety of nonparametric inverse problems.

\def\arraystretch{1.5}
\setlength{\tabcolsep}{6pt}
\begin{table}
	\centering
	\begin{tabular}{ccccc}
		$\gamma$ & $\alpha$ & $\Theta(\Psi_n)$ & $\Theta(\Phi_m)$ & Restrictions\\
		\toprule
		\textcolor{red}{(pol)} & \textcolor{blue}{(pol)} & $n^{-\frac{2(p-s)}{2p+2a+1}}$ & $m^{-\frac{(p-s) \wedge a}{a}}$ & $p \geq s$, $a > \frac 1 2$\\
		\arrayrulecolor{black}\hline
		\textcolor{red}{(exp)} & \textcolor{blue}{(pol)} & $(\log n)^{2s+2a+1} \cdot n^{-1}$ & $m^{-1}$ & $a > \frac 1 2$\\
		\hline
		\textcolor{red}{(pol)} & \textcolor{blue}{(exp)} & $(\log n)^{-2(p-s)}$ & $(\log m)^{-2(p-s)}$ & $p \geq s$\\
		\hline
		\textcolor{red}{(exp)} & \textcolor{blue}{(exp)} & $(\log n)^{2s} \cdot n^{-\frac{p}{p+a}}$ & \def\arraystretch{1}\begin{tabular}{cl}
			\footnotesize{$(\log m)^{2s} \cdot m^{-p/a} $} & \footnotesize{if $a \geq p$}\\
			\footnotesize{$m^{-1}$} & \footnotesize{if $a < p$}
		\end{tabular}\def\arraystretch{1.7} &
	\end{tabular}
	\caption{Exemplary rates of convergence for nonparametric intensity estimation. The rates are given in the framework of Theorems~\ref{THM:L:N},~\ref{THM:L:M} and~\ref{THM:U:NM:I:II} which impose the given restrictions. In all examples $\omega_0=1$, $\omega_j=\vert j \vert^{2s}$ for $j \neq 0$, whereas the choices (pol) and (exp) for the sequences $\gamma$ and $\alpha$ are explained in Section~\ref{subs:minimax:rates}.}\label{tab:rates}
\end{table}
\def\arraystretch{1}
\section{Adaptive estimation}\label{S:ADAP}

The estimator considered in Theorem~\ref{THM:U:NM:I:II} is obtained by specializing the estimator in~\eqref{eq:est} with the truncation parameter $\knast$.
This procedure suffers from the apparent drawback that the resulting estimator depends on the knowledge of the classes $\Lambda_\gamma^r$ and $\Fc_\alpha^d$.
In this section, we provide adaptive choices of the truncation parameter based on model selection (see \cite{barron1999risk,massart2007concentration} for comprehensive presentations in the context of nonparametric estimation).
The principal idea of model selection procedures consists in defining a truncation parameter $\widehat k$ in a fully data-driven way as the minimizer of a penalized empirical contrast,
\begin{equation*}
 \widehat k \defeq \argmin_{k \in \mathcal M_n} \big\{ \Upsilon_n(\widehat \lambda_k) + \pen_k \big\},
\end{equation*}
where $\Upsilon_n: \Sc_n \to \R$ is a contrast function with $\Sc_n$ being the linear subspace of $\LL^2$ spanned by the functions $\e_j(\cdot)$ for $j \in \{ -n,\ldots,n \}$, $\pen_k$ a (as a function of $k$) non-decreasing penalty that mimics the variance, and $\mathcal M_n$ a set of admissible values of $k$ (which represents the set of admissible models since each choice of $k$ corresponds to a finite dimensional model which is given by the functions spanned by the basis functions $\e_j(\cdot)$ with $j \in \{ -k,\ldots,k \}$).

In order to construct an adaptive estimator which does not require any \emph{a priori} knowledge of $\Lambda_\gamma^r$ and $\Fc_\alpha^d$, we proceed in two steps:
in the first step, we assume that $\Lambda_\gamma^r$ is unknown but $\Fc_\alpha^d$ known.
Hence, the overall estimation procedure (in particular, the definition of the penalty term) might still depend on the knowledge of the sequence $\alpha=(\alpha_j)_{j \in \Z}$.
This results in a \emph{partially adaptive} definition $\widetilde k$ of the truncation parameter. 
In the second step, we dispense with any knowledge on the classes $\Lambda_\gamma^r$ and $\Fc_\alpha^d$ and propose a \emph{fully data-driven} choice $\widehat k$ of the truncation parameter.

\subsection{Partially adaptive estimation ($\Lambda_\gamma^r$ unknown, $\Fc_\alpha^d$ known)}\label{sub:partially}

For the definition of our partially adaptive choice of the dimension parameter we introduce some notation:
for any $k \in \N_0$, let
\begin{equation*}
\Delta_k^\alpha = \max_{0 \leq j \leq k} \omega_j \alpha_j^{-1} \quad \text{and} \quad \delta_k^\alpha = (2k+1) \Delta_k^\alpha \frac{\log(\Delta_k^\alpha \vee (k+3))}{\log (k+3)}.
\end{equation*}
For all $n, m  \in \N$, setting $\omega_j^+ = \max_{0 \leq i \leq j} \omega_i$, we define
\begin{align*}
&N_n^\alpha = \inf \left\lbrace 1 \leq j \leq n : \frac{\alpha_j}{2j+1} < \frac{\log(n+3) \omega_j^+}{n} \right\rbrace - 1 \wedge n,\\
&M_m^\alpha = \inf \{ 1 \leq j \leq m : \alpha_j < 640dm^{-1} \log(m+1) \} -1 \wedge m,
\end{align*}
and set $K_{nm}^\alpha = N_n^\alpha \wedge M_m^\alpha$.
Now, for $t \in \LL^2$, define the contrast $\Upsilon(t) = \Vert t \Vert_\omega^2 - 2 \Re \langle \widehat \lambda_{n \wedge m},t \rangle_\omega$
and the random sequence of penalties $(\tildepen_k)_{k \in \N_0}$ via $$\tildepen_k = \frac{165}{2} d\cdot (\widehat{\fou{\ell}}_0 \vee 1) \cdot \frac{\delta_k^\alpha}{n}.$$
Building on our definition of contrast and penalty, we define the partially adaptive selection of the dimension parameter $k$ as
\begin{equation*}
\kpart = \argmin_{0 \leq k \leq K_{nm}^\alpha} \{ \Upsilon(\widehat \lambda_k) + \tildepen_k\}.
\end{equation*}
\begin{thm}\label{THM:PART:ADAP} Let Assumption~\ref{ass:seq} hold.
	Then, for any $n,m \in \N$,
	\begin{equation*}
	\sup_{\lambda \in \Lambda_\gamma^r} \sup_{f \in \Fc_\alpha^d} \E [ \Vert \widehat \lambda_{\widetilde k} -\lambda \Vert_\omega^2 ] \lesssim \min_{0 \leq k \leq K_{nm}^\alpha} \max \left\lbrace \frac{\omega_k}{\gamma_k}, \frac{\delta_k^\alpha}{n} \right\rbrace  +  \Phi_m
	+  \frac{1}{m} + \frac{1} {n}.
	\end{equation*}
\end{thm}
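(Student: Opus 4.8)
The plan is to run the penalised-contrast argument of Barron--Birg\'e--Massart, adapted to the Poisson inverse problem via the concentration inequalities recorded in the appendix. Write $\widetilde k$ for the selected dimension, put $K\defeq K_{nm}^\alpha$, let $S_k$ be the span of $\{\e_j\colon\abs{j}\le k\}$ and $\mathcal B_k\defeq\{t\in S_k\colon\Vert t\Vert_\omega\le1\}$. Since $K\le n\wedge m$ and $\widehat\lambda_k\in S_k$ shares the coefficients of $\widehat\lambda_{n\wedge m}$ on $\{\abs{j}\le k\}$, one has $\langle\widehat\lambda_{n\wedge m},\widehat\lambda_k\rangle_\omega=\Vert\widehat\lambda_k\Vert_\omega^2$, so that $\Upsilon(\widehat\lambda_k)=\Vert\widehat\lambda_k-\lambda\Vert_\omega^2-\Vert\lambda\Vert_\omega^2-2\nu(\widehat\lambda_k)$ with the linear functional $\nu(t)\defeq\Re\langle\widehat\lambda_{n\wedge m}-\lambda,t\rangle_\omega$. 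Inserting this into $\Upsilon(\widehat\lambda_{\widetilde k})+\tildepen_{\widetilde k}\le\Upsilon(\widehat\lambda_k)+\tildepen_k$ (valid for every $0\le k\le K$), bounding $\nu(u)\le\Vert u\Vert_\omega\sup_{\mathcal B_{k\vee\widetilde k}}\abs{\nu}$ for $u=\widehat\lambda_{\widetilde k}-\widehat\lambda_k\in S_{k\vee\widetilde k}$, and using $2ab\le\tfrac14a^2+4b^2$ together with $\Vert a+b\Vert_\omega^2\le2\Vert a\Vert_\omega^2+2\Vert b\Vert_\omega^2$, one obtains the standard reduction
\begin{equation*}
\Vert\widehat\lambda_{\widetilde k}-\lambda\Vert_\omega^2\le 3\Vert\widehat\lambda_k-\lambda\Vert_\omega^2+8\sup_{t\in\mathcal B_{k\vee\widetilde k}}\nu(t)^2+2\tildepen_k-2\tildepen_{\widetilde k},\qquad 0\le k\le K.
\end{equation*}

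Next I would split the noise. On $\Omega_j$ one has $\widehat{\fou{\ell}}_j/\widehat{\fou{f}}_j-\fou{\lambda}_j=\xi_j/\widehat{\fou{f}}_j+\fou{\lambda}_j(\fou{f}_j-\widehat{\fou{f}}_j)/\widehat{\fou{f}}_j$, while on $\Omega_j^c$ the $j$-th coefficient of $\widehat\lambda_{n\wedge m}-\lambda$ is $-\fou{\lambda}_j$. Hence $\nu=\nu^{(1)}+\nu^{(2)}+\nu^{(3)}$, collecting respectively $\xi_j\widehat{\fou{f}}_j^{-1}\1_{\Omega_j}$ (the stochastic fluctuation of the Poisson data), $\fou{\lambda}_j(\fou{f}_j-\widehat{\fou{f}}_j)\widehat{\fou{f}}_j^{-1}\1_{\Omega_j}$ (the error of the auxiliary sample) and $-\fou{\lambda}_j\1_{\Omega_j^c}$ (the thresholding defect), so that $\sup_{\mathcal B_{k'}}\nu^2\le3\sum_{i=1}^{3}\sup_{\mathcal B_{k'}}(\nu^{(i)})^2$, each $\sup_{\mathcal B_{k'}}(\nu^{(i)})^2$ being a weighted sum of squared coefficients over $\abs{j}\le k'$. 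Since $\Fc_\alpha^d$ is known, the cut-off $K=N_n^\alpha\wedge M_m^\alpha$ and the penalty $\tildepen_k$ are admissible, and the definitions of $N_n^\alpha,M_m^\alpha$ (notably the constant $640$) are chosen so that $m\abs{\fou{f}_j}^2$ and $n\alpha_j/((2j+1)\omega_j^+)$ are large on $\abs{j}\le K$. Introduce the good event $\mathcal E\defeq\{\abs{\widehat{\fou{f}}_j-\fou{f}_j}\le\tfrac12\abs{\fou{f}_j}\text{ for }\abs{j}\le K\}\cap\{\widehat{\fou{\ell}}_0\vee1\ge\tfrac12(\fou{\ell}_0\vee1)\}$: Bernstein/Hoeffding bounds for the bounded summands of $\widehat{\fou{f}}_j$ and a Poisson tail bound for $\widehat{\fou{\ell}}_0$ make $\mathbb P(\mathcal E^c)$ smaller than any fixed power of $n\wedge m$, while on $\mathcal E$, for all $\abs{j}\le K$: (i) $\Omega_j$ holds, so $\nu^{(3)}$ vanishes on $\mathcal B_K$; (ii) $\abs{\widehat{\fou{f}}_j}^2\ge\tfrac14\abs{\fou{f}_j}^2\ge\alpha_j/(4d)$; (iii) $\tildepen_k\ge\tfrac{165}{4}d(\fou{\ell}_0\vee1)\delta_k^\alpha/n$. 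Because $\Vert\widehat\lambda_{\widetilde k}\Vert_\omega^2\le\Vert\widehat\lambda_K\Vert_\omega^2\le m\sum_{\abs{j}\le n\wedge m}\omega_j\abs{\widehat{\fou{\ell}}_j}^2$ has polynomially bounded $\LL^2(\mathbb P)$-norm, Cauchy--Schwarz against $\mathbb P(\mathcal E^c)^{1/2}$ gives $\E[\Vert\widehat\lambda_{\widetilde k}-\lambda\Vert_\omega^2\1_{\mathcal E^c}]\lesssim n^{-1}$.

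On $\mathcal E$, I would treat $\nu^{(2)}$ deterministically: using (ii) and $\abs{\widehat{\fou{f}}_j}^{-2}\1_{\Omega_j}\le m$,
\begin{equation*}
\sup_{\mathcal B_{k'}}(\nu^{(2)})^2\1_{\mathcal E}\le\sum_{0\le\abs{j}\le K}\omega_j\abs{\fou{\lambda}_j}^2\min\!\big(m,\tfrac{4d}{\alpha_j}\big)\abs{\fou{f}_j-\widehat{\fou{f}}_j}^2,
\end{equation*}
so that $\E\abs{\fou{f}_j-\widehat{\fou{f}}_j}^2\le1/m$ gives, uniformly in $k'\le K$, $\E[\sup_{\mathcal B_{k'}}(\nu^{(2)})^2\1_{\mathcal E}]\lesssim d\sum_{j}\gamma_j\abs{\fou{\lambda}_j}^2\cdot\tfrac{\omega_j}{\gamma_j}\min(1,\tfrac1{m\alpha_j})\le dr\Phi_m+dr/m$ (the $j=0$ term absorbed into $r/m$). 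The crucial term is $\nu^{(1)}$: conditionally on $Y_1,\dots,Y_m$ it is a centred linear functional of the (centred) Poisson empirical measure, and I would apply the Talagrand-type concentration inequality for additive functionals of a Poisson point process from the appendix (cf.\ \cite{kroll2016concentration}) to $t\mapsto\nu^{(1)}(t)$ over the sieve $\mathcal B_{k'}$, on the event $\mathcal E$ where the random denominators $\widehat{\fou{f}}_j$ are pinned down by (ii). This produces a deterministic sequence $p(k')\asymp(\fou{\ell}_0\vee1)\delta_{k'}^\alpha/n$ --- the logarithmic ratio in $\delta_k^\alpha$ being exactly what makes $\sum_{0\le k'\le K}\E[(\sup_{\mathcal B_{k'}}(\nu^{(1)})^2-p(k'))_+\1_{\mathcal E}]\lesssim1/n$ --- while the constant $165/2$ in $\tildepen$ is calibrated so that $24p(k')\le\tildepen_{k'}$ on $\mathcal E$ by (iii); monotonicity of $(\tildepen_k)_k$ then yields $24\sup_{\mathcal B_{k\vee\widetilde k}}(\nu^{(1)})^2\1_{\mathcal E}-2\tildepen_{\widetilde k}\le24(\sup_{\mathcal B_{k\vee\widetilde k}}(\nu^{(1)})^2-p(k\vee\widetilde k))_+\1_{\mathcal E}+\tildepen_k$.

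Assembling: taking expectations in the reduction of the first paragraph, splitting on $\mathcal E$ and $\mathcal E^c$ and inserting the above gives, for every $0\le k\le K$, $\E[\Vert\widehat\lambda_{\widetilde k}-\lambda\Vert_\omega^2]\lesssim\E[\Vert\widehat\lambda_k-\lambda\Vert_\omega^2]+\E[\tildepen_k]+\Phi_m+n^{-1}+m^{-1}$. A routine bias--variance estimate for the non-adaptive estimator --- bounding the bias $\sum_{\abs{j}>k}\omega_j\abs{\fou{\lambda}_j}^2\le r\,\omega_k/\gamma_k$ via monotonicity of $(\omega_n/\gamma_n)_n$, and the variance of the coefficients over $\abs{j}\le k$ through the same $\nu^{(1)},\nu^{(2)},\nu^{(3)}$ split (now using only independence of the two samples, the Poisson variance identity $\E\abs{\xi_j}^2=\fou{\ell}_0/n$, and (ii)) --- yields $\E[\Vert\widehat\lambda_k-\lambda\Vert_\omega^2]\lesssim r\,\omega_k/\gamma_k+d(\fou{\ell}_0\vee1)\delta_k^\alpha/n+dr\Phi_m+r/m$; combined with $\E[\tildepen_k]\le\tfrac{165}{2}d(1+\fou{\ell}_0)\delta_k^\alpha/n\lesssim\delta_k^\alpha/n$ this gives $\E[\Vert\widehat\lambda_{\widetilde k}-\lambda\Vert_\omega^2]\lesssim\max\{\omega_k/\gamma_k,\delta_k^\alpha/n\}+\Phi_m+n^{-1}+m^{-1}$, and minimising over $0\le k\le K_{nm}^\alpha$ proves the claim. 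The main obstacle is the concentration step for $\nu^{(1)}$: since $\int_0^1\e_j(-t)\dd N_i(t)$ is unbounded, the classical bounded-difference Talagrand inequality is inapplicable and one must use its Poisson-adapted version, whose variance, weak-variance and sup-norm proxies must be matched against $\delta_k^\alpha$ and the explicit penalty constant so that the residual series is summable and the penalty dominates; a secondary difficulty is the bookkeeping around the random penalty factor $\widehat{\fou{\ell}}_0\vee1$ and the data-driven cut-off $K_{nm}^\alpha$, which is exactly what forces the good event $\mathcal E$.
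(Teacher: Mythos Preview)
Your overall architecture---penalised-contrast inequality, good/bad event splitting, Talagrand-type concentration for the Poisson fluctuation, and a deterministic bound for the $f$-error---is the paper's approach. Two points deserve comment.

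\emph{Noise decomposition.} You keep the random denominator $\widehat{\fou f}_j$ in the Poisson term $\nu^{(1)}$ and propose to apply the concentration inequality conditionally on $Y_1,\dots,Y_m$. The paper instead introduces the auxiliary $\widetilde\lambda_k=\sum_{|j|\le k}\widehat{\fou\ell}_j/\fou f_j\,\e_j$ with the \emph{true} $\fou f_j$ and the decomposition $\widehat\lambda_k-\lambda_k=\Theta_k+\widetilde\Theta_k+\check\Theta_k$, so that the appendix lemma applies directly to $\widetilde\Theta$ with deterministic quantities $\delta_k^\alpha,\Delta_k^\alpha$; the cross-term $\Theta$ is then controlled on $\Xi_2$ by the pointwise inequality $|\fou f_j/\widehat{\fou f}_j-1|^2\le\tfrac14$, giving $\sup_{\mathcal B_k}|\langle\Theta,\cdot\rangle_\omega|^2\le\tfrac14\sup_{\mathcal B_k}|\langle\widetilde\Theta,\cdot\rangle_\omega|^2$. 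Your conditional route is legitimate but requires checking that the Poisson Talagrand inequality (Proposition~C.1 in \cite{kroll2016concentration}) is stated at the right level of generality; the paper's detour via the true $f$ avoids this.

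\emph{Bad-event bound.} Your claim that $\P(\mathcal E^c)$ is ``smaller than any fixed power of $n\wedge m$'' is too optimistic: Hoeffding for the $Y$-sample only yields $\P(\Xi_2^c)\lesssim m^{-4}$. More importantly, the crude global Cauchy--Schwarz $\E[\Vert\widehat\lambda_{\widetilde k}-\lambda\Vert_\omega^2\1_{\mathcal E^c}]\le\E[\Vert\widehat\lambda_K\Vert_\omega^4]^{1/2}\P(\mathcal E^c)^{1/2}$ does not give $\lesssim n^{-1}$ in general, since $\Vert\widehat\lambda_K\Vert_\omega^2\le m\sum_{|j|\le K}\omega_j|\widehat{\fou\ell}_j|^2$ can have fourth-moment square root of order $mK\omega_K^+$, which is not killed by $m^{-2}$. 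The paper handles this by (i) splitting $\mathcal E^c$ into $\Xi_2^c$ and $\Xi_1^c\cap\Xi_2$, (ii) applying Cauchy--Schwarz coefficient-wise rather than globally, and (iii) crucially invoking the definition of $N_n^\alpha$ to obtain $\sum_{|j|\le K}\omega_j^+/(n\alpha_j)\lesssim(\log(n+3))^{-1}\sum_{|j|\le N_n^\alpha}(2|j|+1)^{-1}\lesssim1$, which is what makes the residual $\lesssim 1/m+1/n$. You should replace the one-line Cauchy--Schwarz by this argument.
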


\subsection{Fully adaptive estimation ($\Lambda_\gamma^r$ and $\Fc_\alpha^d$ unknown)}\label{sub:fully}

We now also dispense with the knowledge of the smoothness of the error density and propose a fully data-driven selection $\kfull$ of the dimension parameter. 
As in the case of partially adaptive estimation, we have to introduce some notation first.
For $k \in \N_0$, let
\begin{equation*}
\widehat \Delta_k = \max_{0 \leq j \leq k} \frac{\omega_j}{\vert \widehat{\fou{f}}_j\vert^2} \1_{\Omega_j} \qquad \text{and} \qquad \widehat \delta_k = (2k+1) \widehat \Delta_k \frac{\log(\widehat \Delta_k \vee (k+4))}{\log(k+4)}.
\end{equation*}
For $n,m \in \N$, set
\begin{align*}
&\widehat N_n = \inf \{ 1 \leq j \leq n : \vert \widehat{\fou{f}}_j \vert^2/(2j+1) < \log(n+4)\omega_j^+/n \} - 1 \wedge n,\\
&\widehat M_m = \inf \{1 \leq j \leq m : \vert \widehat{\fou{f}}_j\vert^2 <  m^{-1} \log(m)\} - 1 \wedge m,
\end{align*}
and $\Khatnm = \widehat N_n \wedge \widehat M_m$.
We consider the same contrast function as in the partially adaptive case but define the random sequence $(\hatpen_k)_{k \in \Nzero}$ of penalities now by $$\hatpen_k =  2750 \cdot (\widehat{\fou{\ell}}_0 \vee 1) \cdot \frac{\widehat \delta_k}{n}$$
which does no longer depend on $\alpha$ nor $d$.
Finally, set
\begin{equation*}
\kfull = \argmin_{0 \leq k \leq \Khatnm} \{ \Upsilon(\widehat \lambda_k) + \hatpen_k \}.
\end{equation*}
In order to state and prove the upper risk bound of the estimator $\widehat \lambda_{\widehat k}$, we have to introduce some further notation.
We keep the definition of $\Delta_k^\alpha$ from Subsection~\ref{sub:partially} but slightly redefine $\delta_k^\alpha$ as
\begin{equation*}
\delta_k^\alpha = (2k+1) \Delta_k^\alpha \frac{\log(\Delta_k^\alpha \vee (k+4)) }{\log(k+4)}.
\end{equation*}
For $k \in \Nzero$, we also define
\begin{equation*}
\Delta_k = \max_{0 \leq j \leq k} \frac{\omega_j}{\abs{\fou{f}_j}^2} \qquad \text{and} \qquad \delta_k = (2k+1) \Delta_k \frac{\log(\Delta_k \vee (k+4)) }{\log(k+4)},
\end{equation*}
which can be regarded as analogues of $\Delta_k^\alpha$ and $\delta_k^\alpha$ in Subsection~\ref{sub:partially} in the case of a known error density $f$.
Finally, for $n,m \in \N$, define
\begin{align*}
&\Nalphamn = \inf \{ 1 \leq j \leq n : \alpha_j/(2j+1) <  4d\log(n+4)\omega_j^+/n \} -1 \wedge n,\\
&\Nalphapn = \inf \{ 1 \leq j \leq n : \alpha_j/(2j+1) < \log(n+4)\omega_j^+/(4dn) \} - 1 \wedge n,\\
&\Malphamm = \inf \{ 1 \leq j \leq m : \alpha_j < 4d m^{-1} \log m \} - 1 \wedge m,\\
&\Malphapm = \inf \{ 1 \leq j \leq m : 4d\alpha_j <  m^{-1} \log m \} - 1 \wedge m,
\end{align*}
and set $\Knmalpham = \Nalphamn \wedge \Malphamm$, $\Knmalphap = \Nalphapn \wedge \Malphapm$.
In contrast to the proof of Theorem~\ref{THM:PART:ADAP} we have to impose an additional assumption for the proof of an upper risk bound of $\widehat \lambda_{\kfull}$.

\begin{ass}\label{ass:adap:fully}
	$\exp(-m\alpha_{\Malphapm +1}/(128d)) \leq C(\alpha, d) m^{-5}$ for all $m \in \N$.
\end{ass}

\begin{thm}\label{THM:FULLY:ADAP}
	Let Assumptions~\ref{ass:seq} and~\ref{ass:adap:fully} hold.
	Then, for any $n,m \in \N$,
	\begin{equation*}
	\sup_{\lambda \in \Lambda_\gamma^r} \sup_{f \in \Fc_\alpha^d} \E [ \Vert \widehat \lambda_{\widehat k} - \lambda \Vert_\omega^2 ] \lesssim \min_{0 \leq k \leq \Knmalpham} \max \left\lbrace \frac{\omega_k}{\gamma_k}, \frac{\delta_k^\alpha}{n} \right\rbrace  + \Phi_m + \frac{1}{m} + \frac{1}{n}.
	\end{equation*}
\end{thm}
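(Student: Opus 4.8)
We sketch the strategy. The plan is to reduce the analysis of the fully data-driven choice $\kfull$ to the partially adaptive situation already settled in Theorem~\ref{THM:PART:ADAP}; the only genuinely new feature is that the ingredients steering the procedure — the empirical error coefficients $\widehat{\fou{f}}_j$, the empirical mass $\widehat{\fou{\ell}}_0$, the penalty $\hatpen_k$, and the random cut-off $\Khatnm=\widehat N_n\wedge\widehat M_m$ — are now random. Accordingly I would first introduce a \emph{good event} $\Xi$ on which (i) $(4d)^{-1}\alpha_j\le\abs{\widehat{\fou{f}}_j}^2\le 4d\,\alpha_j$ for all frequencies up to the relevant scale, (ii) $\widehat{\fou{\ell}}_0\le 2(\fou{\lambda}_0\vee 1)$, and, as a consequence of (i), (iii) $\Knmalpham\le\Khatnm\le\Knmalphap$, $\widehat\delta_k$ is comparable to $\delta_k^\alpha$ up to absolute constants and powers of $d$, and hence $\hatpen_k$ is comparable to $\tildepen_k$ uniformly over $0\le k\le\Khatnm$. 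The probability of $\Xi^c$ should be bounded by a sufficiently high negative power of $n$ plus one of $m$: for (i) and most of (iii) by Bernstein's inequality for the bounded i.i.d.\ summands $\e_j(-Y_i)$ (using $d^{-1}\le\abs{\fou{f}_j}^2/\alpha_j\le d$); for (ii) by a Poisson tail bound for $\widehat{\fou{\ell}}_0=n^{-1}\sum_{i=1}^nN_i([0,1))$, a normalised sum of i.i.d.\ $\mathrm{Poisson}(\fou{\lambda}_0)$ variables with $\fou{\lambda}_0\le\sqrt r$; and for the delicate boundary frequency of the $m$-cut-off precisely by Assumption~\ref{ass:adap:fully}, which forces the corresponding large-deviation probability to be $\lesssim m^{-5}$ and so prevents $\widehat M_m$ from overshooting $\Malphapm$.

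On $\Xi^c$ I would bound the loss crudely. Since $\kfull\le\Khatnm\le n\wedge m$, $\abs{\widehat{\fou{f}}_j}^2\1_{\Omega_j}\ge 1/m$ and $\abs{\widehat{\fou{\ell}}_j}\le\widehat{\fou{\ell}}_0$, one has $\Vert\widehat\lambda_{\kfull}\Vert_\omega^2\le m\,(2(n\wedge m)+1)\,\omega_{\Khatnm}^+\,\widehat{\fou{\ell}}_0^2$, and the definition of $\widehat N_n$ forces $\omega_{\Khatnm}^+\le n$ (as $\abs{\widehat{\fou{f}}_j}\le 1$); together with $\Vert\lambda\Vert_\omega^2\le r$ this makes $\Vert\widehat\lambda_{\kfull}-\lambda\Vert_\omega^2$ at most a fixed polynomial in $n,m$ times $1+\widehat{\fou{\ell}}_0^2$. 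Since all moments of $\widehat{\fou{\ell}}_0$ are bounded uniformly over $\Lambda_\gamma^r$, the Cauchy--Schwarz inequality together with the bound on $\mathbb{P}(\Xi^c)$ (whose exponent is chosen large enough to absorb the polynomial factors) yields $\E[\Vert\widehat\lambda_{\kfull}-\lambda\Vert_\omega^2\1_{\Xi^c}]\lesssim n^{-1}+m^{-1}$.

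On $\Xi$ the argument mirrors the proof of Theorem~\ref{THM:PART:ADAP}. By (iii) every $k\in\{0,\dots,\Knmalpham\}$ is an admissible competitor in the minimisation defining $\kfull$ and $\hatpen_k\asymp\tildepen_k$ there; writing $\Upsilon(t)=\Vert t-\lambda\Vert_\omega^2-\Vert\lambda\Vert_\omega^2-2\Re\langle\widehat\lambda_{n\wedge m}-\lambda,t\rangle_\omega$ and using $\Upsilon(\widehat\lambda_{\kfull})+\hatpen_{\kfull}\le\Upsilon(\widehat\lambda_k)+\hatpen_k$ gives, after the usual $2ab\le\tfrac14 a^2+4b^2$ splitting applied to the linear term, for every such $k$,
\[
 \Vert\widehat\lambda_{\kfull}-\lambda\Vert_\omega^2\,\1_\Xi\ \lesssim\ V_k+\Vert\lambda_k-\lambda\Vert_\omega^2+\big(A_{\max(\kfull,k)}-c\,\hatpen_{\max(\kfull,k)}\big)_+\1_\Xi+\hatpen_k,
\]
with $\lambda_k=\sum_{\abs j\le k}\fou{\lambda}_j\e_j$, $V_k=\Vert\widehat\lambda_k-\lambda_k\Vert_\omega^2$, and $A_{k'}=\sum_{\abs j\le k'}\omega_j\abs{\xi_j}^2\abs{\widehat{\fou{f}}_j}^{-2}\1_{\Omega_j}$ the Poisson-noise part of $V_{k'}$; the two remaining pieces of $\widehat{\fou{\ell}}_j\widehat{\fou{f}}_j^{-1}\1_{\Omega_j}-\fou{\lambda}_j$ are the error-density noise $\fou{\lambda}_j\fou{f}_j(\widehat{\fou{f}}_j^{-1}-\fou{f}_j^{-1})\1_{\Omega_j}$, which contributes $\Phi_m$ exactly as in the partially adaptive case (through $\E\abs{\widehat{\fou{f}}_j-\fou{f}_j}^2\le m^{-1}$ and the constraint $\kfull\le\widehat M_m$), and the threshold remainder $\fou{\lambda}_j\1_{\Omega_j^c}$, which vanishes on $\Xi$. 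Using $\Vert\lambda_k-\lambda\Vert_\omega^2\le r\,\omega_k/\gamma_k$ (monotonicity of $(\omega_n/\gamma_n)$), $\E V_k\lesssim\delta_k^\alpha/n+\Phi_m$ and $\E[\hatpen_k\1_\Xi]\lesssim\delta_k^\alpha/n$ (recall $\E\widehat{\fou{\ell}}_0=\fou{\lambda}_0\le\sqrt r$), taking expectations and minimising over $k\le\Knmalpham$ produces $\min_{0\le k\le\Knmalpham}\max\{\omega_k/\gamma_k,\delta_k^\alpha/n\}+\Phi_m+n^{-1}+m^{-1}$, apart from the single stochastic term $\E[(A_{\max(\kfull,k)}-c\,\hatpen_{\max(\kfull,k)})_+\1_\Xi]$.

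The main obstacle is showing this last term is $\lesssim n^{-1}$ uniformly — with one absolute penalty constant ($2750$) — while both the penalty $\hatpen_{k'}$ and the competing collection $\{0,\dots,\Khatnm\}$ are random: this is exactly what the Talagrand-type concentration inequalities for functionals of Poisson point processes recalled in the appendix (from~\cite{kroll2016concentration}) are for, classical Talagrand bounds for bounded i.i.d.\ variables being inapplicable since $\int\e_j(-t)\,\dd N_i(t)$ is an unbounded point-process functional; one controls it on $\Xi$, where $\hatpen_{k'}$ is sandwiched between constant multiples of $\tildepen_{k'}$, and the penalty is calibrated so that the resulting Bernstein-type deviation terms sum to $O(n^{-1})$ over the collection. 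A subsidiary point is that, on $\Xi$, the random collection lies between $\{0,\dots,\Knmalpham\}$ and $\{0,\dots,\Knmalphap\}$, whose cut-offs differ only by bounded factors inside the arguments of $\alpha$, so restricting the final minimum to the range $\{0,\dots,\Knmalpham\}$ costs only a term absorbed into $\Phi_m+n^{-1}$ — the comparison already needed between Theorems~\ref{THM:U:NM:I:II} and~\ref{THM:PART:ADAP}.
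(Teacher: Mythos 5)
Your overall blueprint — good event where $\widehat{\fou{f}}_j$, $\widehat{\fou{\ell}}_0$, $\widehat\delta_k$, $\hatpen_k$, $\Khatnm$ are all sandwiched between constant multiples of their population/$\alpha$-counterparts, large-deviation bounds for its complement, and then a verbatim rerun of the partially adaptive proof on the good event — is exactly the route the paper takes. The paper's decomposition is $1=\1_{\Xi_1\cap\Xi_2\cap\Xi_3}+\1_{\Xi_2^\complement}+\1_{\Xi_1^\complement\cap\Xi_2}+\1_{\Xi_1\cap\Xi_2\cap\Xi_3^\complement}$ with $\Xi_3=\{\Knmalpham\le\Khatnm\le\Knmalphap\}$, $\P(\Xi_3^\complement)\lesssim m^{-4}$ (Lemma~\ref{l:prob:mho3c}, powered by Assumption~\ref{ass:adap:fully}), and on the good event the inequalities $\Delta_k/4\le\widehat\Delta_k\le 9\Delta_k/4$ and $\delta_k\le d\,\zeta_d\,\delta_k^\alpha$ replay the contrast/penalty comparison from Theorem~\ref{THM:PART:ADAP}; there is no real difference of strategy on $\Xi$.

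There is, however, a genuine gap in your handling of the bad event. You bound the loss there crudely by a fixed polynomial in $(n,m)$, namely $\Vert\widehat\lambda_{\kfull}\Vert_\omega^2\le m(2m+1)\,\omega_{\Khatnm}^+\widehat{\fou{\ell}}_0^2\lesssim m^2 n\,\widehat{\fou{\ell}}_0^2$, and then claim Cauchy--Schwarz with ``$\P(\Xi^c)$ whose exponent is chosen large enough to absorb the polynomial factors'' finishes the job. But the exponent is \emph{not} at your disposal: Lemma~\ref{l:prob:mho2c} gives $\P(\Xi_2^\complement)\lesssim m^{-4}$, Lemma~\ref{l:prob:mho3c} gives $\P(\Xi_3^\complement)\lesssim m^{-4}$, and Assumption~\ref{ass:adap:fully} only yields a fixed $m^{-5}$. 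Only $\P(\Xi_1^\complement)\lesssim e^{-Cn}$ is genuinely small enough to swallow polynomials, and it only swallows polynomials in $n$. On $\Xi_2^\complement\cup\Xi_3^\complement$ your computation yields $m^2n\cdot\P^{1/2}\lesssim m^2n\cdot m^{-2}=n$, which diverges; no Hölder tuning fixes this, because the fixed $m^{-4}$ can never cancel an unbounded polynomial in $n$ when $n\gg m$. The paper gets the $1/m$ on $\Xi_2^\complement$ (and $1/m$ on $\Xi_3^\complement$, $1/n$ on $\Xi_1^\complement\cap\Xi_2$) precisely by \emph{not} discarding the $O(1/n)$-variance structure: it estimates $\E\vert\widehat{\fou{\ell}}_j-\fou{\ell}_j\vert^4\lesssim n^{-2}$ via Theorem~2.10 of Petrov, and then uses the definition of $N_n^\alpha$ ($\omega_j\le\omega_j^+\le n\alpha_j/((2j+1)\log(n+3))$ for $j\le N_n^\alpha$) so that the weighted sum over $|j|\le K_{nm}^\alpha$ is $O(1)$ after the $1/n$ factor, giving $m\cdot\P(\Xi_2^\complement)^{1/2}\cdot O(1)\lesssim m^{-1}$. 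You need to reproduce this delicate bookkeeping; a global crude bound followed by Cauchy--Schwarz cannot close the argument.
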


Note that the only additional prerequisite of Theorem~\ref{THM:FULLY:ADAP} in contrast to Theorem~\ref{THM:PART:ADAP} is the validity of Assumption~\ref{ass:adap:fully}.

\subsection{Examples of convergence rates (continued from Subsection~\ref{subs:minimax:rates})}\label{subs:rates:adap:poisson}

We consider the same configurations for the sequences $\omega$, $\gamma$ and $\alpha$ as in Subsection~\ref{subs:minimax:rates}.
In particular, we assume that $\omega_0=1$ and $\omega_j = \vert j \vert^{2s}$ for all $j \neq 0$.
The different configurations for $\gamma$ and $\alpha$ will be investigated in the following (compare also with the minimax rates of convergence given in Table~\ref{tab:rates}).
Note that the additional Assumption~\ref{ass:adap:fully} is satisfied in all the considered cases.
Let us define $\kndiamond = \argmin_{k\in \Nzero} \max \left\lbrace \omega_k/\gamma_k, \delta_k^\alpha/n \right\rbrace$, that is, $\kndiamond$ realizes the best compromise between squared bias and penalty.

\paragraph{Scenario \emph{\textcolor{red}{(pol)}-\textcolor{blue}{(pol)}:}}
In this scenario, it holds $\kndiamond \asymp n^{1/(2p+2a+1)}$ and $\Nalphamn \asymp ( n / \log n )^{1/(2s+2a+1)}$.
First assume that $\Nalphamn \leq \Malphamm$.
In case that $s < p$, the rate with respect to $n$ is $n^{- 2(p-s)/(2p+2a+1)}$ which is the minimax optimal rate.
In case that $s=p$, it holds $\Nalphamn \lnsim \kndiamond$ and the rate is $(n / \log n)^{-2(p-s)/(2p+2a+1)}$ which is minimax optimal up to a logarithmic factor.
Assume now that $\Malphamm \leq \Nalphamn$. If $\kndiamond \lesssim \Malphamm$, then the estimator obtains the optimal rate with respect to $n$.
Otherwise, $\Malphamm \asymp (m/\log m)^{1/(2a)}$ yields the contribution $(m/\log m)^{-(p-s)/a}$ to the rate. 

\paragraph{Scenario \emph{\textcolor{red}{(exp)}-\textcolor{blue}{(pol)}:}}
$\Nalphamn \asymp (n/\log n)^{1/(2a+2s+1)}$ as in scenario \textcolor{red}{(pol)}-\textcolor{blue}{(pol)}. Since $\kndiamond \asymp \log n$,  it holds $\kndiamond \lesssim \Nalphamn$ and the optimal rate with respect to $n$ holds in case that $\kndiamond \lesssim \Malphamm$.
Otherwise, the bias-penalty tradeoff generates the contribution $(\Malphamm)^{2s} \cdot \exp(-2p \cdot \Malphamm)$ to the rate.

\paragraph{Scenario \emph{\textcolor{red}{(pol)}-\textcolor{blue}{(exp)}:}} It holds that $\kndiamond \asymp \Nalphamn$ and again the sample size $n$ is no obstacle for attaining the optimal rate of convergence.
If $\kndiamond \lesssim \Malphamm$, the optimal rate holds as well.
If $\Malphamm \lnsim \kndiamond$, we get the rate $(\log m)^{-2(p-s)}$ which coincides with the optimal rate with respect to the sample size $m$.

\paragraph{Scenario \emph{\textcolor{red}{(exp)}-\textcolor{blue}{(exp)}:}} We have $\Nalphamn \asymp \log n$
and $k_1 \leq \kndiamond \leq k_2$ where $k_1$ is the solution of $k_1^2\exp((2a+2p)k_1) \asymp n$ and $k_2$ the solution of $\exp((2a+2p)k_2) \asymp n$. Thus, we have $\kndiamond \lnsim \Nalphamn$ and computation of $\omega_{k_1}/\gamma_{k_1}$ resp. $\delta_{k_2}^\alpha/n$ shows that only a loss by a logarithmic factor can occur as far as $\kndiamond \leq \Nalphamn \wedge \Malphamm$. If $\Malphamm \leq \kndiamond$, the contribution to the rate arising from the trade-off between squared bias and penalty is determined by $(\Malphamm)^{2s} \cdot \exp(-2p\Malphamm)$ which deteriorates the optimal rate with respect to $m$ at most by a logarithmic factor.

\appendix

\section{Proofs of Section~\ref{S:MINIMAX}}\label{s:proofs:minimax}

\subsection{Proof of Theorem~\ref{THM:L:N}}

Let us define $\zeta$ as in the statement of the theorem and for each $\theta=(\theta_j)_{0 \leq j \leq k_n^*}\in \{\pm 1\}^{k_n^* + 1}$ the function $\lambda_\theta$ through
\begin{equation*}
\lambda_\theta = \left(\frac r 4 \right)^{1/2} +  \theta_0 \left(\frac{r\zeta}{4n}\right)^{1/2}  + \left(\frac{r\zeta}{4n}\right)^{1/2} \sum_{1 \leq \abs{j} \leq \knast} \theta_\jabs \alpha_j^{-1/2} \e_j\\
\end{equation*}
\noindent Then each $\lambda_\theta$ is a real-valued function by definition which is non-negative since we have
\begin{align*}
\norm{\left(\frac{r\zeta}{4n}\right)^{1/2} \sum_{0 \leq \abs{j} \leq k_n^*} \theta_\jabs \alpha_j^{-1/2} \e_j}_\infty &\leq \left(\frac{r \zeta}{4n}\right)^{1/2} \sum_{0 \leq \abs{j} \leq k_n^*} \alpha_j^{-1/2}\\
&\hspace{-10em}\leq \left(\frac{r\zeta \Gamma}{4} \right)^{1/2} \left( \frac{\gamma_{k_n^*}}{\omega_{k_n^*}} \ \sum_{0 \leq \abs{j} \leq k_n^*} \frac{\omega_j}{n\alpha_j} \right)^{1/2} \leq \left(\frac{r\zeta \eta \Gamma}{4} \right)^{1/2} \leq \left(\frac r 4 \right)^{1/2}.
\end{align*}
Moreover $\norm{\lambda_\theta}_\gamma^2 \leq r$ holds for each $\theta \in \{\pm 1 \}^{\knast+1}$ due to the estimate
\begin{align*}
\norm{\lambda_\theta}_\gamma^2 = \sum_{0 \leq \abs{j} \leq k_n^*} \abs{\fou{\lambda_\theta}_j}^2 \gamma_j
&= \left[ \left(\frac{r}{4}\right)^{1/2}  + \theta_0 \left( \frac{r\zeta}{4n} \right)^{1/2}  \right]^2  + \frac{r\zeta}{4} \, \sum_{1 \leq \abs{j} \leq k_n^*} \frac{\gamma_j}{n\alpha_j}\\
&\leq \frac r 2 + \frac{r\zeta}{2} \frac{\gamma_{k_n^*}}{\omega_{k_n^*}} \sum_{0 \leq \abs{j} \leq k_n^*} \frac{\omega_j}{n\alpha_j} \leq r.
\end{align*}
This estimate and the non-negativity of $\lambda_\theta$ together imply $\lambda_\theta \in \Lambda_\gamma^r$ for all $\theta \in \{ \pm 1\}^{k_n^* + 1}$.
From now on let $f \in \Fc_\alpha^d$ be fixed and let $\P_\theta$ denote the joint distribution of the i.i.d. samples $N_1,\ldots,N_n$ and $Y_1,\ldots,Y_m$ when the true parameters are $\lambda_\theta$ and $f$, respectively.
Let $\P_\theta^{N_i}$ denote the corresponding one-dimensional marginal distributions and $\E_\theta$ the expectation with respect to $\P_\theta$. 
Let $\widetilde \lambda$ be an arbitrary estimator of $\lambda$.
The key argument of the proof is the following reduction scheme:
\begin{align}
\sup_{\lambda \in \Lambda_\gamma^r} \sup_{f \in \Fc_\alpha^d} \E [ \Vert \widetilde \lambda - \lambda \Vert_\omega^2] 
&\geq \frac{1}{2^{k_n^*+1}} \sum \limits_{\theta \in \{\pm 1\}^{k_n^*+1}} \sum_{0 \leq \abs{j} \leq k_n^*} \omega_j \ \E_\theta [\vert \fou{\widetilde \lambda - \lambda_\theta}_j \vert^2]\notag\\
&\hspace{-6em}=\sum_{0 \leq \abs{j} \leq k_n^*} \frac{\omega_j}{2} \sum_{\theta \in \{\pm 1\}^{k_n^*+2}} \{ \E_\theta [\vert \fou{\widetilde \lambda - \lambda_\theta}_j \vert^2] + \E_{\theta^{(j)}} [\vert \fou{\widetilde \lambda - \lambda_{\theta^{(\vert j \vert)}}}_j \vert^2] \},\label{eq:rs:l:n}
\end{align}
where for $\theta \in \{\pm 1\}^{k_n^*+1}$ and $j \in \{ -\knast ,\ldots,\knast \}$ the element $\theta^{(\vert j \vert)} \in \{\pm 1\}^{k_n^*+1}$ is defined by $\theta^{(\vert j \vert)}_k = \theta_k$ for $k \neq \vert j\vert$ and $\theta^{(\vert j \vert)}_{\vert j \vert} = -\theta_{\vert j \vert}$.
Consider the Hellinger affinity $\rho(\P_\theta, \P_{\theta^{(\vert j \vert)}})\defeq \int \sqrt{d\P_{\theta} d\P_{\theta^{(\vert j \vert)}}}$.
For an arbitrary estimator $\widetilde \lambda$ of $\lambda$ we have
\begin{equation*}
\rho(\P_\theta, \P_{\theta^{(\vert j \vert)}}) 
\leq \left(\int \frac{\vert \fou{\widetilde \lambda - \lambda_\theta}_j \vert^2}{\vert \fou{\lambda_\theta - \lambda_{\theta^{(\vert j \vert)}}}_j \vert^2} d\P_\theta \right)^{1/2} + \left(\int \frac{\vert \fou{\widetilde \lambda - \lambda_{\theta^{(\vert j \vert)}}}_j \vert^2}{\vert \fou{\lambda_\theta - \lambda_{\theta^{(\vert j \vert)}}}_j \vert^2} d\P_{\theta^{(\vert j \vert)}} \right)^{1/2}
\end{equation*}
from which we conclude by means of the elementary inequality $(a+b)^2 \leq 2a^2 + 2b^2$ that
\begin{equation*}
\frac 1 2 \vert \fou{\lambda_\theta - \lambda_{\theta^{(\vert j \vert)}}}_j \vert^2 \rho^2(\P_\theta, \P_{\theta^{(\vert j \vert)}}) \leq \E_\theta [\vert \fou{\widetilde \lambda - \lambda_\theta}_j \vert^2] + \E_{\theta^{(\vert j \vert)}} [\vert \fou{\widetilde \lambda - \lambda_{\theta^{(\vert j \vert)}}}_j \vert^2].
\end{equation*}
Define the Hellinger distance between two probability measures $\P$ and $\Q$ as $H(\P,\Q) = (\int [\sqrt{d\P}-\sqrt{d\Q} ]^2 )^{1/2}$ and, analogously, the Hellinger distance between two finite measures $\nu$ and $\mu$ (that not necessarily have total mass equal to one) by $H(\nu, \mu)=(\int [\sqrt{d\nu}-\sqrt{d\mu} ]^2 )^{1/2}$ (as usual, the integral is formed with respect to any measure dominating both $\nu$ and $\mu$).
Let $\nu_\theta$ denote the intensity measure of a Poisson point process $N$ on $[0,1)$ whose Radon-Nikodym derivative with respect to the Lebesgue measure is given by $\ell_\theta = \lambda_\theta \star f$.
Note that we have the estimate $\ell_\theta \geq \delta \sqrt{r}$ for all $\theta \in  \{\pm 1\}^{k_n^*+1}$ with $\delta = \frac{1}{2} - \frac{1}{2\sqrt 2}$ due to
\begin{align*}
\left(\frac{r\zeta}{4n} \right)^{1/2} + \sum_{1 \leq \abs{j} \leq \knast} \abs{\fou{\lambda_\theta}_j \cdot \fou{f}_j} \leq \left(\frac{rd\zeta}{4n}\right)^{1/2} \sum_{0 \leq \abs{j} \leq \knast} \alpha_j^{-1/2} \leq \frac{\sqrt r}{2\sqrt 2} 
\end{align*}
which can be realized in analogy to the non-negativity of $\lambda_\theta$ shown above.
We have
\begin{align*}\label{eq:hell:n}
H^2(\nu_\theta, \nu_{\theta^{(\vert j \vert)}})  = \int \frac{\vert \ell_\theta - \ell_{\theta^{(\vert j \vert)}} \vert^2}{(\sqrt{\ell_\theta} + \sqrt{\ell_{\theta^{(\vert j \vert)}}})^2} \leq \frac{\norm{\ell_\theta-\ell_{\theta^{(\vert j \vert)}}}^2_{2}}{4\delta \sqrt r}
= \frac{\zeta d\sqrt r}{4\delta n}\leq \frac{1}{n}.
\end{align*}
Since the distribution of the sample $Y_1,\ldots,Y_m$ does not depend on the choice of $\theta$ we obtain
\begin{equation*}\label{eq:crux}
H^2(\P_\theta, \P_{\theta^{(\vert j \vert)}}) \leq \sum_{i=1}^n H^2(\P_\theta^{N_i}, \P_{\theta^{(\vert j \vert)}}^{N_i}) \leq \sum_{i=1}^n H^2(\nu_\theta, \nu_{\theta^{(\vert j \vert)}}) \leq 1,
\end{equation*}
where the first estimate follows from Lemma 3.3.10~(i) in~\cite{reiss1989approximate} and the second one is due to Theorem 3.2.1 in~\cite{reiss1993course} which can be applied since each $N_i$ is a Poisson point process for the Poisson model.
Thus, the relation $\rho(\P_\theta, \P_{\theta^{(\vert j \vert)}})=1-\frac 1 2 H^2(\P_\theta, \P_{\theta^{(\vert j \vert)}})$ implies $\rho(\P_\theta, \P_{\theta^{(\vert j \vert)}}) \geq \frac 1 2$.
Finally, putting the obtained estimates into the reduction scheme~\eqref{eq:rs:l:n} leads to
\begin{align*}
\sup_{\lambda \in \Lambda_\gamma^r} \sup_{f \in \Fc_\alpha^d} \E [ \Vert \widetilde \lambda - \lambda \Vert_\omega^2] 
&\geq \sum_{0 \leq \abs{j} \leq \knast} \frac{\omega_j}{16} \vert \fou{\lambda_\theta - \lambda_{\theta^{(\vert j \vert)}}}_j \vert^2
\geq \frac{\zeta r}{16\eta} \cdot \Psi_n
\end{align*}
which finishes the proof of the theorem since $\widetilde \lambda$ was arbitrary. \hfill $\qedsymbol$

\subsection{Proof of Theorem~\ref{THM:L:M}}

By Markov's inequality we have for an arbitrary estimator $\widetilde \lambda$ of $\lambda$ and $A > 0$ (which will be specified below)
\begin{equation*}
\E [  \Phi_m^{-1} \Vert \widetilde \lambda - \lambda\Vert_\omega^2 ] \geq A \cdot \P (\Vert \widetilde \lambda - \lambda\Vert_\omega^2 \geq A \Phi_m ),
\end{equation*}
which by reduction to two hypotheses implies
\begin{align*}
\sup_{\lambda \in \Lambda_\gamma^r}  \sup_{f \in \Fc_\alpha^d} \E [ \Phi_m^{-1}\Vert \widetilde \lambda - \lambda\Vert_\omega^2 ]  
&\geq A \cdot \sup_{\theta \in \{\pm 1\}} \P_{\theta} (\Vert \widetilde \lambda - \lambda_\theta\Vert_\omega^2 \geq A \Phi_m )
\end{align*}
where $\P_\theta$ denotes the distribution when the true parameters are $\lambda_\theta$ and $f_\theta$. The specific hypotheses $\lambda_1, \lambda_{-1}$ and $f_1, f_{-1}$ will be specified below.
If $\lambda_{-1}$ and $\lambda_1$ can be chosen such that $\norm{\lambda_{1}-\lambda_{-1}}_\omega^2 \geq 4 A \Phi_m$, application of the triangle inequality yields
\begin{equation*}
\P_\theta (  \Vert \widetilde \lambda - \lambda_\theta\Vert_\omega^2 \geq A \Phi_m )  \geq \P_\theta (\tau^* \neq \theta )
\end{equation*}
where $\tau^*$ is the \emph{minimum distance test} given by $\tau^* = \arg \min_{\theta \in \{\pm 1\}} \Vert\widetilde \lambda - \lambda_\theta\Vert_\omega^2$.
Hence, we obtain
\begin{align}
\inf_{\widetilde \lambda} \sup_{\lambda \in \Lambda_\gamma^r}  \sup_{f \in \Fc_\alpha^d} \P ( \Vert \widetilde \lambda - \lambda\Vert_\omega^2 \geq A \Phi_m ) &\geq \inf_{\widetilde \lambda} \sup_{\theta \in \{\pm 1\}} \P_\theta (\Vert\widetilde \lambda - \lambda_\theta\Vert_\omega^2 \geq A \Phi_m ) \nonumber \\
&\geq \inf_{\tau} \sup_{\theta \in \{\pm 1\}} \P_\theta \left(\tau \neq \theta \right) \eqdef p^*\label{eq:df:pstar}
\end{align}
where the infimum is taken over all $\{ \pm 1 \}$-valued functions $\tau$ based on the observations.
Thus, it remains to find hypotheses $\lambda_1, \lambda_{-1} \in \Lambda_\gamma^r$ and $f_1, f_{-1} \in \Fc_\alpha^d$ such that
\begin{equation}\label{eq:A:m}
\norm{\lambda_1 - \lambda_{-1}}_\omega^2 \geq 4 A \Phi_m,
\end{equation}
and which allow us to bound $p^*$ by a universal constant (independent of $m$) from below.
For this purpose, set $\kmast=\arg \max_{j \geq 1} \{ \omega_j/\gamma_j \min (1,\frac{1}{m\alpha_j} ) \}$ and $a_m=\zeta \min (1, m^{-1/2}\alpha_{k_m^*}^{-1/2} )$, where $\zeta$ is defined as in the statement of the theorem.
Take note of the inequalities
$1/d^{1/2} = (1-(1-1/d^{1/4}) )^2 \leq (1-a_m)^2 \leq 1$
and
$1 \leq (1+a_m)^2 \leq ( 1 + (1-1/d^{1/4}))^2 = (2-1/d^{1/4})^2 \leq d^{1/2} $
which in combination imply $1/d^{1/2} \leq (1+\theta a_m)^2 \leq d^{1/2}$ for $\theta \in \{\pm 1\}$.
These inequalities will be used below without further reference.
For $\theta \in \{\pm 1\}$, we define
\begin{equation*}
\lambda_\theta= \left( \frac{r}{2} \right)^{1/2}  + (1-\theta a_m) \ \left( \frac{r}{8} \right)^{1/2} d^{-1/4} \ \gamma_{k_m^*}^{-1/2}  ( \e_{k_m^*} +   \e_{-\kmast} ).
\end{equation*}
Furthermore, we have
$$\norm{\lambda_\theta}_\gamma^2= \frac{r}{2} + 2\gamma_{k_m^*} \vert\fou{\lambda_\theta}_{k_m^*}\vert^2 \leq \frac{r}{2} + (1 + a_m)^2 \frac{r}{4}d^{-1/2} \leq \frac{3r}{4}$$ and
$\abs{\lambda_\theta(t)} \geq \left( \frac{r}{2} \right)^{1/2}  - 2\left( \frac{r}{8} \right)^{1/2} \geq 0 \text{ for all } t \in [0,1)$
which together imply that $\lambda_\theta \in \Lambda_\gamma^r$ for $\theta \in \{\pm 1\}$.
The identity 
$$\norm{\lambda_1-\lambda_{-1}}_\omega^2=r a_m^2d^{-1/2}\omega_{k_m^*}\gamma_{k_m^*}^{-1}= \zeta^2 r d^{-1/2} \cdot \Phi_m$$
shows that the condition in~\eqref{eq:A:m} is satisfied with $A=\zeta^2 r/(4\sqrt d)$.

Let $f \in \Fc_\alpha^{\sqrt d}$ be such that $f \geq 1/2$ (the existence is guaranteed through condition (\textcolor{black}{C4})) and define for $\theta \in \{\pm 1\}$
\begin{equation*}
f_\theta = f + \theta a_m (\fou{f}_{k_m^*} \e_{k_m^*} + \fou{f}_{-k_m^*} \e_{-k_m^*}).
\end{equation*}
Since $k_m^* \geq 1$ we have $\int_0^1 f_\theta(x)dx = 1$ and $f_\theta \geq 0$ holds because of the estimate 
$$\abs{f_\theta(t)} \geq 1/2 - 2a_m \alpha_{k_m^*}^{1/2} d^{1/2} \geq 0 \qquad \text{for all } t.$$
For $\jabs \neq k_m^*$,  we have $\fou{f}_j = \fou{f_\theta}_j$ and thus trivially $1/d \leq \vert \fou{f_\theta}_j \vert^2/\alpha_j \leq d$ for $\jabs \neq k_m^*$ since $\Fc_\alpha^{\sqrt d} \subset \Fc_\alpha^{d}$.
Moreover
\begin{equation*}
1/d \leq d^{-1/2} \frac{\vert \fou{f}_{\pm k_m^*} \vert^2}{\alpha_{\pm k_m^*}} \leq \frac{(1+\theta a_m)^2 \vert \fou{f}_{\pm k_m^*} \vert^2}{\alpha_{\pm k_m^*}} \leq d^{1/2} \frac{\vert \fou{f}_{\pm k_m^*} \vert^2}{\alpha_{\pm k_m^*}} \leq d
\end{equation*}
and hence $f_\theta \in \Fc_\alpha^d$ for $\theta \in \{\pm 1\}$.

To obtain a lower bound for $p^\ast$ defined in~\eqref{eq:df:pstar} consider the joint distribution $\P_\theta$ of the samples $N^1,\ldots,N^n$ and $Y_1,\ldots,Y_m$ under $\lambda_\theta$ and $f_\theta$.
Note that due to our construction we have 
$\lambda_{-1} \star f_{-1}=\lambda_1 \star f_1$. Thus $\P_{-1}^{N^i} = \P_{1}^{N^i}$
for all $i=1,\ldots,n$ (due to the fact that the distribution of a \emph{Poisson} point process is determined by its intensity) and the Hellinger distance between $\P_{-1}$ and $\P_1$ does only depend on the distribution of the sample $Y_1,\ldots,Y_m$.
More precisely,
\begin{equation*}
H^2(\P_{-1},\P_{1}) = H^2(\P_{-1}^{Y_1,\ldots,Y_m},\P_{1}^{Y_1,\ldots,Y_m}) \leq m H^2(\P_{-1}^{Y_1},\P_{1}^{Y_1}),
\end{equation*}
and we proceed by bounding $H^2(\P_{-1}^{Y_1},\P_{1}^{Y_1})$ from above.
Recall that $f \geq 1/2$ which is used to obtain the estimate
\begin{align*}
H^2(\P_{-1}^{Y_1},\P_{1}^{Y_1}) = \int_0^1 \frac{\abs{f_1(x)-f_{-1}(x)}^2}{2f(x)}dx&\leq  \int \abs{f_1(x)-f_{-1}(x)}^2 dx \leq \frac{1}{m}.
\end{align*}
Hence we have $H^2(\P_{-1},\P_{1}) \leq 1$ and application of statement (ii) of Theorem 2.2 in~\cite{tsybakov2009introduction} with $\alpha=1$ implies $p^\ast \geq \frac{1}{2} (1-\sqrt{3}/2)$ which finishes the proof of the theorem.  \hfill $\qedsymbol$

\subsection{Proof of Theorem~\ref{THM:U:NM:I:II}}

Set $\widetilde \lambda_{k_n^\ast}= \sum_{0 \leq \abs{j} \leq k_n^\ast} \fou{\lambda}_j \1_{\Omega_j} \e_j$.
The proof consists in finding appropriate upper bounds for the quantities $\square$ and $\triangle$ in the estimate
\begin{equation*}\label{eq:AB}
\E [\Vert \widehat \lambda_{k_n^\ast}-\lambda\Vert_\omega^2] \leq 2 \, \E [\Vert\widehat \lambda_{k_n^\ast}-\widetilde \lambda_{k_n^\ast}\Vert_\omega^2] + 2 \, \E [\Vert\lambda - \widetilde \lambda_{k_n^\ast}\Vert_\omega^2] \eqdef 2 \square + 2 \triangle.
\end{equation*}

\noindent \emph{Upper bound for $\square$}: Using the identity $\E \widehat{\fou{\ell}}_j=\fou{f}_j \fou{\lambda}_j$ we obtain
\begin{align*}
\square &= \sum_{0 \leq \abs{j} \leq k_n^\ast} \omega_j \, \E [ \vert \widehat{\fou{\ell}}_j/\widehat{\fou{f}}_j- \fou{\lambda}_j\vert^2 \1_{\Omega_j} ] \\
&\leq 2 \sum_{0 \leq \abs j \leq k_n^\ast} \omega_j \, \E [  \vert \widehat{\fou{\ell}}_j/\widehat{\fou{f}}_j- \E  \widehat{\fou{\ell}}_j /\widehat{\fou{f}}_j\vert^2 \, \1_{\Omega_j} ]\\
&\hspace{1em}+ 2 \sum_{0 \leq \abs j \leq k_n^\ast} \omega_j \vert \fou{\lambda}_j\vert^2 \, \E [  \vert \fou{f}_j/\widehat{\fou{f}}_j - 1\vert^2 \, \1_{\Omega_j} ] \eqdef 2 \square_1 + 2 \square_2.
\end{align*}
Using the estimate $\abs{a}^2 \leq 2\abs{a-1}^2 + 2$ for $a=\fou{f}_j/\widehat{\fou{f}}_j$, the definition of $\Omega_j$ and the independence of $\widehat{\fou{\ell}}_j$ and $\widehat{\fou{f}}_j$ we get
\begin{align*}
\square_1 &= \sum_{0 \leq \abs j \leq k_n^\ast} \omega_j \, \E\left[  \vert\widehat{\fou{\ell}}_j/\widehat{\fou{f}}_j- \E   \widehat{\fou{\ell}}_j    /\widehat{\fou{f}}_j\vert^2 \cdot \abs{\frac{\fou{f}_j}{\fou{f}_j}}^2 \1_{\Omega_j}\right]\\
&\leq 2 \sum_{0 \leq \abs j \leq k_n^\ast} m\omega_j \frac{\var (\widehat{\fou{\ell}}_j)  \var ( \widehat{\fou{f}}_j ) }{\abs{\fou{f}_j}^2} + 2 \sum_{0 \leq \abs j \leq k_n^\ast} \omega_j \frac{\var( \widehat{\fou{\ell}}_j )}{\abs{\fou{f}_j}^2}.
\end{align*}
Applying statements~\ref{it:l:u:a} and~\ref{it:l:u:b} from Lemma~\ref{l:upper} together with $f \in \Fc_\alpha^d$ yields
\begin{equation*}
\square_1 \leq 4d \sum_{0 \leq \abs j \leq k_n^\ast} \omega_j \, \frac{ \fou{\lambda}_0}{n\alpha_j}
\end{equation*}
which using that $\gamma_j \geq 1$ (which holds due to Assumption~\ref{ass:seq}) implies
\begin{equation*}
\square_1 \leq 4d\sqrt r \sum_{0 \leq \abs j \leq k_n^\ast} \frac{\omega_j}{n\alpha_j} \leq 4d\sqrt r \cdot \Psi_n.
\end{equation*}
Now consider $\square_2$.
Using the estimate $\abs{a}^2 \leq 2\abs{a-1}^2 + 2$ for $a=\fou{f}_j/\widehat{\fou{f}}_j$ and the definition of $\Omega_j$ yields
\begin{equation}\label{eq:A2:1}
\E [  \vert \fou{f}_j/\widehat{\fou{f}}_j - 1\vert^2 \, \1_{\Omega_j} ] \leq 2m \, \frac{\E  [\vert\widehat{\fou{f}}_j - \fou{f}_j\vert^4] }{\abs{\fou{f}_j}^2} + 2 \, \frac{\var ( \widehat{\fou{f}}_j ) }{\abs{\fou{f}_j}^2}.
\end{equation}
Notice that Theorem~2.10 in~\cite{petrov1995limit} implies the existence of a constant $C > 0$ with $\E [\vert\widehat{\fou{f}}_j-\fou{f}_j\vert^4] \leq C/m^2$.
Using this inequality in combination with assertion~\ref{it:l:u:b} from Lemma~\ref{l:upper} and $f \in \Fc_\alpha^d$ implies
\begin{equation}\label{eq:A2:1}
\E [  \vert \fou{f}_j/\widehat{\fou{f}}_j - 1\vert^2 \, \1_{\Omega_j} ] \leq 2d (C + 1)/(m\alpha_j).
\end{equation}
In addition, $\E [  \vert \fou{f}_j/\widehat{\fou{f}}_j - 1\vert^2 \1_{\Omega_j} ] \leq m \var( \widehat{\fou{f}}_j) \leq 1$ which in combination with~\eqref{eq:A2:1} implies
\begin{equation*}
\square_2 \leq 2d(C+1) \sum_{0 \leq \abs j \leq k_n^\ast}\omega_j \abs{\fou{\lambda}_j}^2 \min \{ 1,  1/(m\alpha_j) \}. 
\end{equation*}
Exploiting the fact that $\lambda \in \Lambda_\gamma^r$ and the definition of $\Phi_m$ in~\eqref{eq:def:Phi:m} we obtain 
$$\square_2 \leq 2dr(C+1)(1+\gamma_1/\omega_1)\cdot \Phi_m.$$ 
Putting together the estimates for $\square_1$ and $\square_2$ yields 
$$\square \leq 8d\sqrt r \cdot \Psi_n + 4d(C+1)(1+\gamma_1/\omega_1)r \cdot \Phi_m.$$
\noindent \emph{Upper bound for $\triangle$}: $\triangle$ can be decomposed as
\begin{align*}
\triangle &= \sum_{j \in \Z} \omega_j \abs{\fou{\lambda}_j}^2 \, \E (1-\1_{\{0 \leq \abs j \leq k_n^*\}} \cdot \1_{\Omega_j} )\\
&=\sum_{\abs j > k_n^*} \omega_j \abs{\fou{\lambda}_j}^2  + \sum_{0 \leq \abs j \leq k_n^*} \omega_j \abs{\fou{\lambda}_j}^2 \cdot \P( \Omega_j^\complement ) = \triangle_1 + \triangle_2.
\end{align*}
$\lambda \in \Lambda_\gamma^r$ implies $\triangle_1 \leq r \omega_{k_n^*}/\gamma_{k_n^*}\leq r \cdot \Psi_n$ and Lemma~\ref{l:upper} yields the estimate $\triangle_2 \leq 4dr \cdot \Phi_m$ which together imply $\triangle \leq r \cdot \Psi_n + 4dr \cdot \Phi_m$.
Combining the derived estimates for $\square$ and $\triangle$ finishes the proof.\qed

\subsection{Auxiliary results for the proof of Theorem~\ref{THM:U:NM:I:II}}\label{s:aux}
\begin{lem}\label{l:upper} The following assertions hold:
	\begin{enumerate}[label=\alph*)]
		\item\label{it:l:u:a} $\var ( \widehat{\fou{\ell}}_j ) \leq \fou{\lambda}_0/n$,
		
		\item\label{it:l:u:b} $\var ( \widehat{\fou{f}}_j)  \leq 1/m$,
		\item\label{it:l:u:c} $\P (  \Omega_j^\complement )   = \P (\vert \widehat{\fou{f}}_j\vert^2 < 1/m )   \leq \min \left\lbrace 1, 4d/(m\alpha_j) \right\rbrace \quad \forall f \in \Fc_\alpha^d$.
	\end{enumerate}
\end{lem}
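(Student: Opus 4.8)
The plan is to treat the three assertions separately, each being a short computation based on properties of Poisson point processes and of the empirical Fourier coefficients. For \ref{it:l:u:a}, I would start from the definition $\widehat{\fou{\ell}}_j = \frac1n \sum_{i=1}^n \int_0^1 \e_j(-t)\dd N_i(t)$ and use independence of the $N_i$ to write $\var(\widehat{\fou{\ell}}_j) = \frac1n \var\big(\int_0^1 \e_j(-t)\dd N_1(t)\big)$. Since $N_1$ is a Poisson point process with intensity $\ell = \lambda \star f$, the relevant second-moment formula for Poisson integrals (a consequence of Campbell's theorem as already invoked in the text) gives $\var\big(\int_0^1 \e_j(-t)\dd N_1(t)\big) = \int_0^1 |\e_j(-t)|^2 \ell(t)\dd t = \int_0^1 \ell(t)\dd t = \fou{\ell}_0 = \fou{\lambda}_0 \fou{f}_0 = \fou{\lambda}_0$, using $|\e_j|\equiv 1$ and $\fou{f}_0 = 1$. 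This yields the bound with equality. (For the complex-valued integrand one applies the formula to real and imaginary parts, or uses the general fact that for a Poisson process $\var(\int h \dd N) = \int |h|^2 \ell$ for complex $h$.)

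For \ref{it:l:u:b}, $\widehat{\fou{f}}_j = \frac1m \sum_{i=1}^m \e_j(-Y_i)$ with $Y_i$ i.i.d.\ $\sim f$, so by independence $\var(\widehat{\fou{f}}_j) = \frac1m \var(\e_j(-Y_1)) \leq \frac1m \E|\e_j(-Y_1)|^2 = \frac1m$, again since $|\e_j|\equiv 1$. For \ref{it:l:u:c}, the case where the right-hand side equals $1$ is trivial, so assume $4d/(m\alpha_j) < 1$. On the event $\Omega_j^\complement$ we have $|\widehat{\fou{f}}_j|^2 < 1/m$; since $f \in \Fc_\alpha^d$ gives $|\fou{f}_j|^2 \geq \alpha_j/d$, I would argue that for this event to occur $\widehat{\fou{f}}_j$ must deviate from $\fou{f}_j$ by a definite amount, namely $|\widehat{\fou{f}}_j - \fou{f}_j| \geq |\fou{f}_j| - |\widehat{\fou{f}}_j| > \sqrt{\alpha_j/d} - 1/\sqrt m$. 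Under the assumption $4d/(m\alpha_j)<1$ one has $1/\sqrt m < \tfrac12\sqrt{\alpha_j/d}$, hence $|\widehat{\fou{f}}_j - \fou{f}_j| > \tfrac12\sqrt{\alpha_j/d}$, so by Markov's (Chebyshev's) inequality and \ref{it:l:u:b},
\begin{equation*}
\P(\Omega_j^\complement) \leq \P\Big(|\widehat{\fou{f}}_j - \fou{f}_j|^2 > \tfrac{\alpha_j}{4d}\Big) \leq \frac{4d}{\alpha_j}\var(\widehat{\fou{f}}_j) \leq \frac{4d}{m\alpha_j}.
\end{equation*}

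I do not anticipate a genuine obstacle here; the only point requiring a little care is the variance identity for the complex stochastic integral against a Poisson process in \ref{it:l:u:a}, which I would justify by splitting into real and imaginary parts (the standard Campbell-type formula $\var(\int h\dd N)=\int h^2\ell$ for real $h$) and noting that the cross term vanishes, or simply by citing the complex version directly; and in \ref{it:l:u:c} the elementary reduction showing that the threshold event forces a macroscopic fluctuation of $\widehat{\fou{f}}_j$, which is where the constant $4d$ (rather than a smaller one) comes from.
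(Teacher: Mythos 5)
Your proof is correct and follows essentially the same route as the paper: part~\ref{it:l:u:a} via the second-moment identity $\var(\int h\, dN)=\int |h|^2\ell$ (the paper states this as an equality too), part~\ref{it:l:u:b} via the one-line i.i.d.\ variance computation, and part~\ref{it:l:u:c} via a case split plus Chebyshev. The only cosmetic difference is in~\ref{it:l:u:c}: the paper splits on $|\fou{f}_j|^2 < 4/m$ versus $|\fou{f}_j|^2 \geq 4/m$ and bounds $\P(|\widehat{\fou{f}}_j/\fou{f}_j - 1| > 1/2)$, whereas you split on $4d/(m\alpha_j)\geq 1$ and bound $\P(|\widehat{\fou{f}}_j-\fou{f}_j|^2 > \alpha_j/(4d))$ directly — these are equivalent and yield the same constant.
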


\begin{proof}
	The proof of statement~\ref{it:l:u:a} is given by the identity
	\begin{equation*}
	\var( \widehat{\fou{\ell}}_j )  = \frac 1 n \, \var \left(  \int_0^1 \e_j(t)dN_1(t) \right)   = \frac 1 n \, \int_0^1 \vert \e_j(t)\vert^2 (\lambda \star f)(t)dt = \frac 1 n \cdot \fou{\lambda}_0.
	\end{equation*}
	
	For the proof of~\ref{it:l:u:b}, note that we have $\var( \widehat{\fou{f}}_j) =\frac{1}{m} \, \var\left( \e_j(-Y_1)\right)$ and the assertion follows from the estimate
	$$\var ( \e_j(-Y_1) ) = \E [ \abs{\e_j(-Y_1)}^2 ] - \abs{\E \left[ \e_j(-Y_1) \right] }^2 \leq \E [ \abs{\e_j(-Y_1)}^2 ] = 1.$$
	
	For the proof of~\ref{it:l:u:c}, we consider two cases: if $\abs{\fou{f}_j}^2 < 4/m$ we have $1 < \frac{4d}{m\alpha_j}$ because $f \in \Fc_\alpha^d$ and the statement is evident.
	Otherwise, $\abs{\fou{f}_j}^2 \geq 4/m$ which implies
	\begin{align*}
	\P ( \vert \widehat{\fou{f}}_j \vert^2 < 1/m  ) &\leq  \P ( \vert \widehat{\fou{f}}_j\vert/\abs{\fou{f}_j} < 1/2 )\leq \P ( \vert \widehat{\fou{f}}_j/\fou{f}_j - 1\vert > 1/2 ).
	\end{align*}
	Applying Chebyshev's inequality and exploiting the definition of $\Fc_\alpha^d$ yields
	\begin{equation*}
	\P ( \vert \widehat{\fou{f}}_j \vert^2 < 1/m  ) \leq 4/\abs{\fou{f}_j}^2 \cdot \var( \widehat{\fou{f}}_j)  \leq 4d/(m\alpha_j)
	\end{equation*}
	and statement~\ref{it:l:u:c} follows.
\end{proof}
\section{Proofs of Section~\ref{S:ADAP}}\label{s:proofs:adap:poisson}

\subsection{Proof of Theorem~\ref{THM:PART:ADAP}}

Define the events $\Xi_1 = \{ (\fou{\ell}_0 \vee 1)/2 \leq \widehat{\fou{\ell}}_0 \vee 1 \leq 2 (\fou{\ell}_0 \vee 1) \}$ and
\begin{equation*}
\Xi_2 = \left\lbrace \forall \, 0 \leq \vert j \vert \leq M_m^\alpha : \vert \widehat{\fou{f}}_j^{-1} - \fou{f}_j^{-1} \vert \leq \frac{1}{2 \vert \fou{f}_j \vert} \quad \text{and} \quad \vert \widehat{\fou{f}}_j \vert \geq \frac{1}{m}  \right\rbrace.
\end{equation*}
The identity $1=\1_{\Xi_1 \cap \Xi_2} + \1_{\Xi_2^\complement}+ \1_{\Xi_1^\complement \cap \Xi_2}$ provides the decomposition
\begin{equation*}
\E \Vert \widehat \lambda_\kpart - \lambda \Vert_\omega^2 = \underbrace{\E [ \Vert \widehat \lambda_\kpart - \lambda \Vert_\omega^2 \1_{\Xi_1 \cap \Xi_2} ]}_{\eqdef \square_1} + \underbrace{\E [  \Vert \widehat \lambda_\kpart - \lambda \Vert_\omega^2 \1_{\Xi_2^\complement} ]}_{\eqdef \square_2} + \underbrace{\E [  \Vert \widehat \lambda_\kpart - \lambda \Vert_\omega^2 \1_{\Xi_1^\complement \cap \Xi_2} ]}_{\eqdef \square_3}, 
\end{equation*}
and we will establish uniform upper bounds over the ellipsoids $\Lambda_\gamma^r$ and $\Fc_\alpha^d$ for the three terms on the right-hand side separately.

\noindent \emph{Uniform upper bound for $\square_1$}:
Denote by $\Sc_k$ the linear subspace of $\L^2$ spanned by the functions $\e_j(\cdot)$ for $j \in \{ -k,\ldots,k \}$.
Since the identity $\Upsilon(t) = \Vert t-\widehat \lambda_k\Vert_\omega^2 - \Vert \widehat \lambda_k\Vert_\omega^2$ holds for all $t \in \Sc_k$, $k \in \{0,\ldots,n \wedge m\}$, we obtain for all such $k$ that $\argmin_{t \in \Sc_k} \Upsilon_{}(t) = \widehat \lambda_k$.
Using this identity and the definition of $\kpart$ yields for all $k \in \{0,\ldots,\Knmalpha \}$ that
\begin{equation*}
\Upsilon(\widehat \lambda_{\widetilde k}) + \tildepen_{\widetilde k} \leq \Upsilon(\widehat \lambda_k) + \tildepen_k \leq \Upsilon(\lambda_k) + \tildepen_k
\end{equation*}
where $\lambda_k = \sum_{0 \leq \vert j \vert \leq k} \fou{\lambda}_j \e_j$ denotes the projection of $\lambda$ on the subspace $\Sc_k$.
Elementary computations imply
\begin{equation}\label{eq:part:0}
\Vert \widehat \lambda_{\widetilde k}\Vert_\omega^2 \leq \Vert \lambda_k\Vert_\omega^2 + 2 \Re\langle \widehat \lambda_{n \wedge m},\widehat \lambda_{\widetilde k}- \lambda_k \rangle_\omega + \tildepen_k - \tildepen_{\widetilde k}
\end{equation}
for all $k \in \{0,\ldots, \Knmalpha \}$.
In addition to $\lambda_k$ defined above, introduce the further abbreviations
\begin{align*}
\widetilde \lambda_k = \sum_{0 \leq \vert j \vert \leq k} \frac{\widehat{\fou{\ell}}_j}{\fou{f}_j} \e_j \qquad \text{and} \qquad \check{\lambda}_k = \sum_{0 \leq \vert j \vert \leq k} \frac{\fou{\ell}_j}{\widehat{\fou{f}}_j} \1_{\Omega_j} \e_j,
\end{align*}
as well as $\Theta_{k} = \widehat \lambda_{k} - \check \lambda_k -\widetilde \lambda_k + \lambda_k, \widetilde \Theta_{k} = \widetilde \lambda_{k} - \lambda_k, \text{ and }  \check \Theta_k = \check \lambda_k - \lambda_k$.
Using these abbrevations and the identity $\widehat \lambda_{n \wedge m} - \lambda_{n \wedge m}= \Theta_{n \wedge m} + \widetilde \Theta_{n \wedge m} + \check \Theta_{n \wedge m}$, we deduce from~\eqref{eq:part:0} that
\begin{align}\label{eq:part:1}
\Vert \widehat \lambda_{\widetilde k} - \lambda\Vert_\omega^2 &\leq \norm{\lambda - \lambda_k}_\omega^2 + \tildepen_k - \tildepen_{\widetilde k}
+ 2 \Re \langle  \widetilde \Theta_{n \wedge m},\widehat \lambda_{\widetilde k}-\lambda_k \rangle_\omega \notag\\
&\hspace{1em}+ 2 \Re \langle \Theta_{n \wedge m},\widehat \lambda_{\widetilde k} - \lambda_k\rangle_\omega + 2 \Re \langle \check \Theta_{n \wedge m},\widehat \lambda_{\widetilde k} - \lambda_k\rangle_\omega
\end{align}
for all $k \in \{0,\ldots, \Knmalpha \}$.
Define $\mathcal B_k = \{\lambda \in \mathcal S_k : \norm{\lambda}_\omega \leq 1\}$. 
For every $\tau > 0$ and $t \in \Sc_k$, the estimate $2uv \leq \tau u^2 + \tau^{-1}v^2$ implies
\begin{equation*}
2 \abs{\langle h,t \rangle_\omega} \leq 2 \norm{t}_\omega \sup_{t \in \Bc_k} \abs{\langle h,t \rangle_\omega} \leq \tau \norm{t}_\omega^2 + \frac 1 \tau \sup_{t \in \Bc_k} \abs{\langle h,t \rangle_\omega}^2.
\end{equation*}
Because $\widehat \lambda_{\widetilde k}-\lambda_k \in \Sc_{\widetilde k \vee k}$, combining the last estimate with~\eqref{eq:part:1} we get
\begin{align*}
\Vert \widehat \lambda_{\widetilde k}-\lambda\Vert_\omega^2 &\leq \norm{\lambda- \lambda_k}_\omega^2 + 3 \tau \Vert\widehat \lambda_{\widetilde k} - \lambda_k\Vert_\omega^2 + \tildepen_k - \tildepen_{\widetilde k} +\\
&\hspace{-4em}+ \tau^{-1} ( \sup_{t \in \Bc_{k \vee \widetilde k}} \vert \langle \widetilde \Theta_{n \wedge m},t \rangle_\omega\vert ^2 + \sup_{t \in \Bc_{k \vee \widetilde k}} \vert \langle \Theta_{n \wedge m},t  \rangle_\omega\vert^2
+ \sup_{t \in \Bc_{k \vee \widetilde k}} \vert\langle \check \Theta_{n\wedge m},t \rangle_\omega\vert^2).
\end{align*}
Note that $\Vert \widehat \lambda_{\widetilde k} - \lambda_k\Vert_\omega^2 \leq 2 \Vert \widehat \lambda_{\widetilde k} - \lambda\Vert_\omega^2 + 2 \norm{\lambda_k - \lambda}_\omega^2$ and $\norm{\lambda - \lambda_k}_\omega^2 \leq r\omega_k\gamma_k^{-1}$ for all $\lambda \in \Lambda_\gamma^r$ since $\omega\gamma^{-1}$ is non-increasing due to Assumption~\ref{ass:seq}.
Specializing with $\tau = 1/8$, we obtain
\begin{align}\label{eq:tau:1/8}
\Vert \widehat \lambda_{\widetilde k} - \lambda\Vert_\omega^2 &\leq 7 r\omega_k\gamma_k^{-1} + 4\tildepen_k - 4\tildepen_{\widetilde k} + 32 \sup_{t \in \Bc_{k \vee \widetilde k}} \vert\langle \widetilde \Theta_{n \wedge m},t \rangle_\omega\vert^2\notag\\
&\hspace{1em}+ 32\sup_{t \in \Bc_{k \vee \widetilde k}} \vert\langle \Theta_{n\wedge m},t \rangle_\omega\vert^2+ 32 \sup_{t \in \Bc_{k \vee \widetilde k}} \vert\langle \check \Theta_{n \wedge m},t \rangle_\omega\vert^2.
\end{align}
Combining the facts that $\1_{\Omega_j} \1_{\Xi_2} = \1_{\Xi_2}$ for $0 \leq \vert j \vert \leq M_m^\alpha$ and $\Knmalpha \leq M_m^\alpha$ by definition, we obtain for all $j \in \{ -\Knmalpha,\ldots,\Knmalpha \}$ the estimate
\begin{equation*}
\vert\fou{f}_j/\widehat{\fou{f}}_j \1_{\Omega_j} -1 \vert^2 \1_{\Xi_2} = \abs{\fou{f}_j}^2 \vert 1/\widehat{\fou{f}}_j - 1/\fou{f}_j\vert^2 \1_{\Xi_2} \leq 1/4.
\end{equation*}
Hence, $\sup_{t \in \Bc_k} \vert\langle \Theta_{n \wedge m}, t \rangle_\omega\vert^2 \1_{\Xi_2} \leq \frac 1 4 \sup_{t \in \Bc_k} \vert\langle \widetilde \Theta_{n \wedge m},t \rangle_\omega\vert^2$ for all $0 \leq k \leq \Knmalpha$.
Thus, from~\eqref{eq:tau:1/8} we obtain
\begin{equation*}\label{eq:part:2}
\begin{split}
\Vert\widehat \lambda_{\widetilde k} - \lambda\Vert_\omega^2 \, \1_{\Xi_1 \cap \Xi_2} &\leq 7r\frac{\omega_k}{\gamma_k} + 40 \left(\sup_{t \in \Bc_{k \vee \kpart}} \vert\langle \widetilde \Theta_{n \wedge m},t \rangle_\omega\vert^2 - \frac{33d(\fou{\ell}_0 \vee 1) \delta_{k \vee \kpart}^\alpha}{8n} \right)_+\\
&\hspace{-8em}+(165d(\fou{\ell}_0 \vee 1) \delta_{k \vee \widetilde k}^\alpha/n + 4\tildepen_k - 4\tildepen_{\widetilde k})\1_{\Xi_1 \cap \Xi_2} + 32 \sup_{t \in \Bc_{K_{nm}^\alpha}} \vert\langle \check \Theta_{n \wedge m},t \rangle_\omega\vert^2.
\end{split}
\end{equation*}
Exploiting the definition of both the penalty $\tildepen$ and the event $\Xi_1$, we obtain
\begin{align}\label{eq:term:mho1}
\E [ \Vert \widehat \lambda_{\widetilde k} - \lambda\Vert_\omega^2 \,  \1_{\Xi_1 \cap \Xi_2}] &\leq C(d,r) \min_{0 \leq k \leq K_{nm}^\alpha} \max \left\lbrace \frac{\omega_k}{\gamma_k}, \frac{\delta_k^\alpha}{n} \right\rbrace \notag\\
&\hspace{+0em}+40 \sum_{k=0}^{K_{nm}^\alpha} \E \left[ \left( \sup_{t \in \Bc_{k}} \vert\langle \widetilde \Theta_{n \wedge m},t \rangle_\omega\vert^2 - \frac{33(\fou{\ell}_0 \vee 1)d\delta_{k}^\alpha}{8n} \right)_+\right] \notag\\
&\hspace{+0em}+ 32 \E \left[  \sup_{t \in \Bc_\Knmalpha} \vert\langle \check \Theta_{n \wedge m},t \rangle_\omega\vert^2\right].
\end{align}
Applying Lemma~\ref{l:ex:conc} with $\delta_k^*=d\delta_k^\alpha$ and $\Delta_k^*=d\Delta_k^\alpha$ yields
\begin{align*}
&\E \left[ \left( \sup_{t \in \Bc_{k}} \vert \langle \widetilde \Theta_{n \wedge m},t \rangle_\omega\vert^2 - \frac{33d(\fou{\ell}_0 \vee 1) \delta_k^\alpha}{8n} \right)_+\right]\\
&\hspace{1em}\leq K_1 \left[ \frac{d\Vert f \Vert \Vert \lambda \Vert \Delta_k^\alpha}{n} \exp \left( - K_2 \frac{\delta_k^\alpha}{\Vert f \Vert^2 \Vert \lambda \Vert^2 \Delta_k^\alpha} \right) + \quad  \frac{d\delta_k^\alpha}{n^2} \exp(-K_3 \sqrt n) \right].  
\end{align*}
Using Statement~\ref{it:l:B2:a} of Lemma~\ref{l:B2} and the fact that $K_{nm}^\alpha \leq n$ by definition, we obtain that
\begin{align*}
\sum_{k=0}^{K_{nm}^\alpha} \E \left[ \left(\sup_{t \in \Bc_k} \vert \langle \widetilde \Theta_{n \wedge m},t \rangle_\omega\vert - \frac{33d(\fou{\ell}_0 \vee 1) \delta_k^\alpha}{8n} \right)_+\right]&\\
&\hspace{-19em}\lesssim \frac{d^{3/2} \sqrt{r\rho} }{n} \sum_{k=0}^{\infty} \Delta_k^\alpha \exp \left( - \frac{2K_2 k}{ \sqrt{dr\rho} } \cdot \frac{\log(\Delta_k^\alpha \vee (k+3))}{ \log (k+3)} \right) + \exp(-K_3 \sqrt n),
\end{align*}
where the last estimate is due to the fact that $\norm{f}^2 \leq d \rho$ for all $f \in \Fc_\alpha^d$ and $\norm{\lambda}^2 \leq r$ for all $\lambda \in \Lambda_\gamma^r$.
Note that we have $$\sum_{k=0}^{\infty} \Delta_k^\alpha \exp \left( - \frac{2K_2 k}{ \sqrt{dr\rho} } \cdot \frac{\log(\Delta_k^\alpha \vee (k+3))}{ \log (k+3)} \right) \leq C < \infty$$ with a numerical constant $C$ which implies
\begin{equation*}
\sum_{k=0}^{K_{nm}^\alpha} \E \left[ \left(\sup_{t \in \Bc_k} \vert \langle \widetilde \Theta_{n \wedge m},t \rangle_\omega\vert - \frac{33d(\fou{\ell}_0 \vee 1) \delta_k^\alpha}{8n} \right)_+\right] \lesssim \frac{1}{n}.
\end{equation*}
The last term in~\eqref{eq:term:mho1} is bounded by means of Lemma~\ref{l:B4} which immediately yields
$\E [ \sup_{t \in \Bc_{K_{nm}^\alpha}} \vert\langle \check \Theta_{n \wedge m}, t \rangle_\omega \vert^2 ]  \lesssim \Phi_m$.
Combining the preceding estimates, which hold uniformly for all $\lambda \in \Lambda_\gamma^r$ and $f \in \Fc_\alpha^d$, we conclude from Equation~\eqref{eq:term:mho1} that
\begin{align*}
\sup_{\lambda \in \Lambda_\gamma^r} \sup_{f \in \Fc_\alpha^d} \E \left[ \Vert\widehat \lambda_{\widetilde k} - \lambda\Vert_\omega^2 \, \1_{\Xi_1 \cap \Xi_2}\right] \lesssim \min_{0 \leq k \leq K_{nm}^\alpha} \max \left\lbrace \frac{\omega_k}{\gamma_k}, \frac{\delta_k^\alpha}{n} \right\rbrace  + \Phi_m + \frac{1}{n}.
\end{align*}

\noindent \emph{Uniform upper bound for $\square_2$}:
Define $\breve \lambda_k = \sum_{0 \leq \abs j \leq k} \fou{\lambda}_j \1_{\Omega_j} \e_j$. Note that $\Vert\widehat \lambda_k - \breve \lambda_k\Vert_\omega^2 \leq \Vert\widehat \lambda_{k'} - \breve \lambda_{k'}\Vert_\omega^2$ for $k \leq k'$ and $\Vert\breve \lambda_k - \lambda\Vert_\omega^2 \leq \Vert\lambda\Vert_\omega^2$ for all $k \in \N_0$.
Consequently, since $k \in \{ 0,\ldots,\Knmalpha \}$, we obtain the estimate
\begin{align*}
\E [ \Vert\widehat \lambda_{\widetilde k} - \lambda\Vert_\omega^2 \, \1_{\Xi_2^\complement}]  &\leq 2  \E [ \Vert\widehat \lambda_{\kpart} - \breve \lambda_{\kpart}\Vert_\omega^2 \, \1_{\Xi_2^\complement}]  + 2 \, \E [ \Vert\breve \lambda_{\widetilde k} - \lambda\Vert_\omega^2  \1_{\Xi_2^\complement}] \\
&\leq 2  \E [ \Vert\widehat \lambda_{\Knmalpha} - \breve \lambda_{\Knmalpha}\Vert_\omega^2 \, \1_{\Xi_2^\complement} ]  + 2 \norm{\lambda}_\omega^2 \P(\Xi_2^\complement),
\end{align*}
and due to Assumption~\ref{ass:seq} and Lemma~\ref{l:prob:mho2c} it is easily seen that $\Vert \lambda \Vert^2_\omega \cdot \P(\Xi_2^\complement) \lesssim m^{-4}$.
Using the definition of $\Omega_j$, we further obtain
\begin{align}
\E [ \Vert\widehat \lambda_{\Knmalpha} - \breve \lambda_{\Knmalpha}\Vert_\omega^2 \, \1_{\Xi_2^\complement}]
&\leq 2m \sum_{0 \leq \abs j \leq \Knmalpha} \omega_j (  \E [\vert \widehat{\fou{\ell}}_j - \fou{\ell}_j \vert^4] ) ^{1/2} \P(\Xi_2^\complement)^{1/2}\notag\\
&\hspace{-1em}+ 2m \sum_{0 \leq \abs j \leq \Knmalpha} \omega_j \abs{\fou{\lambda}_j}^2 ( \E [\vert \widehat{\fou{f}}_j - \fou{f}_j \vert^4] )^{1/2} \P\left(\Xi_2^\complement \right)^{1/2}\notag\\
&\hspace{-6em}\lesssim m \P(\Xi_2^\complement)^{1/2} \sum_{0 \leq \vert j \vert \leq \Knmalpha} \frac{\omega_j}{n}  + \P(\Xi_2^\complement)^{1/2} \sum_{0 \leq \vert j \vert \leq \Knmalpha} \omega_j \vert \fou{\lambda}_j \vert^2, \label{eq:sums:mho2c}
\end{align}
where the last estimate follows by applying Theorem 2.10 from~\cite{petrov1995limit} with $p=4$ two times.
If $\Knmalpha=0$, Lemma~\ref{l:prob:mho2c} implies
\begin{equation*}
\E [ \Vert\widehat \lambda_{\Knmalpha} - \breve \lambda_{\Knmalpha}\Vert_\omega^2 \, \1_{\Xi_2^\complement}] \lesssim \frac{1}{nm} + \frac{1}{m^2}.
\end{equation*}
Otherwise, if $\Knmalpha >0$, we exploit $\omega_j \leq \omega_j^+\alpha_j^{-1}$, $\Knmalpha \leq N_n^\alpha$ and the definition of $N_n^\alpha$ to bound the first term in~\eqref{eq:sums:mho2c}. The second term in~\eqref{eq:sums:mho2c} can be bounded from above by noting that $\omega_j \leq \gamma_j$ thanks to Assumption~\ref{ass:seq}, and we obtain
\begin{equation*}
\E [ \Vert\widehat \lambda_{\Knmalpha} - \breve \lambda_{\Knmalpha}\Vert_\omega^2 \, \1_{\Xi_2^\complement}] \lesssim \frac{m \P(\Xi_2^\complement)^{1/2}}{\log(n+3)} \left( \sum_{0 \leq \vert j \vert \leq N_n^\alpha} \frac{1}{2\vert j \vert +1}\right)  +  \P(\Xi_2^\complement)^{1/2}.
\end{equation*}
Thanks to the logarithmic increase of the harmonic series, $N_n^\alpha \leq n$ and Lemma~\ref{l:prob:mho2c}, the last estimate implies
\begin{equation*}
\E [ \Vert\widehat \lambda_{\Knmalpha} - \breve \lambda_{\Knmalpha} \Vert_\omega^2 \, \1_{\Xi_2^\complement}] \lesssim \frac{1}{m} + \frac{1}{m^2},
\end{equation*}
if $\Knmalpha>0$, and thus
$\E [ \Vert\widehat \lambda_{\Knmalpha} - \breve \lambda_{\Knmalpha}\Vert_\omega^2 \, \1_{\Xi_2^\complement}] \lesssim \frac{1}{m} + \frac{1}{m^2}$,
independent of the actual value of $\Knmalpha$.
Using the obtained estimates, we conclude
\begin{equation*}\label{eq:part:second}
\E [ \Vert\widehat \lambda_\kpart - \lambda \Vert_\omega^2 \, \1_{\Xi_2^\complement}] \lesssim \frac{1}{m}.
\end{equation*}

\noindent \emph{Uniform upper bound for $\square_3$}: In order to find a uniform upper bound for $\square_3$, first recall the definition $\breve \lambda_k = \sum_{0 \leq \jabs \leq k} \fou{\lambda}_j \1_{\Omega_j}\e_j$ and consider the estimate
\begin{equation}\label{eq:square3:dec}
\E [\Vert \widehat \lambda_{\kpart} - \lambda \Vert_\omega^2 \1_{\Xi_1^\complement \cap \Xi_2}] \leq 2 \E [\Vert \widehat \lambda_\kpart - \breve \lambda_\kpart \Vert_\omega^2 \1_{\Xi_1^\complement \cap \Xi_2}] + 2 \E [ \Vert \breve \lambda_\kpart - \lambda \Vert_\omega^2  \1_{\Xi_1^\complement \cap \Xi_2}].
\end{equation}
Using the estimate $\Vert \breve \lambda_\kpart - \lambda \Vert_\omega^2 \leq \Vert \lambda \Vert_\omega^2$, we obtain for $\lambda \in \Lambda_\gamma^r$  by means of Lemma~\ref{l:prob:mho1c} that
\begin{equation*}
\E [ \Vert \breve \lambda_\kpart - \lambda \Vert_\omega^2  \1_{\Xi_1^\complement \cap \Xi_2}] \leq r \P(\Xi_1^\complement) \lesssim \frac{1}{n}
\end{equation*}
which controls the second term on the right-hand side of~\eqref{eq:square3:dec}.
We now bound the first term on the right-hand side of~\eqref{eq:square3:dec}.
If $\Knmalpha = 0$, we have $\kpart = 0$, and by means of the Cauchy-Schwarz inequality and Theorem~2.10 from~\cite{petrov1995limit} it is easily seen that
\begin{equation*}
\E [\Vert \widehat \lambda_\kpart - \breve \lambda_\kpart \Vert_\omega^2 \1_{\Xi_1^\complement \cap \Xi_2}] \lesssim \frac{1}{n}.
\end{equation*}
Otherwise, $\Knmalpha > 0$, and we need the following further estimate, which is easily verified:
\begin{align}\label{eq:square3:3sums}
\E [ \Vert \widehat \lambda_\kpart - \breve \lambda_\kpart \Vert_\omega^2 \1_{\Xi_1^\complement \cap \Xi_2} ] &\leq 3 \sum_{0 \leq \vert j \vert \leq \Knmalpha} \omega_j \E [ \vert \fou{\ell}_j/\widehat{\fou{f}}_j - \fou{\ell}_j / \fou{f}_j \vert^2 \1_{\Xi_1^\complement \cap \Xi_2} ]\notag\\
&\hspace{1em} + 3 \sum_{0 \leq \vert j \vert \leq \Knmalpha} \omega_j \E [  \vert \widehat{\fou{\ell}}_j - \fou{\ell}_j\vert^2 / \vert \fou{f}_j \vert^2 \1_{\Xi_1^\complement \cap \Xi_2} ]\notag\\
&\hspace{-4em} + 3 \sum_{0 \leq \vert j \vert \leq \Knmalpha} \omega_j \E [ \vert \widehat{\fou{\ell}}_j - \fou{\ell}_j \vert^2 \cdot \vert 1/\widehat{\fou{f}}_j - 1/\fou{f}_j\vert^2 \1_{\Xi_1^\complement \cap \Xi_2} ].
\end{align}
We start by bounding the first term on the right-hand side of~\eqref{eq:square3:3sums}. Using the definition of $\Xi_2$ and $\omega_j \leq \gamma_j$, we obtain for all $\lambda \in \Lambda_\gamma^r$ that
\begin{equation*}
\sum_{0 \leq \vert j \vert \leq \Knmalpha} \omega_j \E [ \vert \fou{\ell}_j/\widehat{\fou{f}}_j - \fou{\ell}_j / \fou{f}_j \vert^2 \1_{\Xi_1^\complement \cap \Xi_2} ] \leq \frac{r}{4} \cdot \P(\Xi_1^\complement) \lesssim \frac{1}{n}.
\end{equation*}
Since $\vert \fou{f}_j \vert^{-2} \leq d\alpha_j$ for $f \in \Fc_\alpha^d$, the Cauchy-Schwarz inequality in combination with Theorem~2.10 from~\cite{petrov1995limit} implies for the second term on the right-hand side of~\eqref{eq:square3:3sums} that
\begin{equation*}
\sum_{0 \leq \vert j \vert \leq \Knmalpha} \omega_j \E [  \vert \widehat{\fou{\ell}}_j - \fou{\ell}_j\vert^2 / \vert \fou{f}_j \vert^2 \1_{\Xi_1^\complement \cap \Xi_2} ] \lesssim \P(\Xi_1^\complement)^{1/2} \sum_{0 \leq \vert j \vert \leq \Knmalpha} \frac{\omega_j^+}{n\alpha_j}.
\end{equation*}
We exploit the definition of $N_n^\alpha$ together with $\Knmalpha \leq N_n^\alpha$ to obtain
\begin{equation*}
\sum_{0 \leq \vert j \vert \leq \Knmalpha} \omega_j \E [  \vert \widehat{\fou{\ell}}_j - \fou{\ell}_j\vert^2 / \vert \fou{f}_j \vert^2 \1_{\Xi_1^\complement \cap \Xi_2} ] \lesssim \frac{\P(\Xi_1^\complement)^{1/2}}{\log(n+3)} \sum_{0 \leq \vert j \vert \leq N_n^\alpha} \frac{1}{2 \vert j \vert + 1},
\end{equation*}
from which by the logarithmic increase of the harmonic series and Lemma~\ref{l:prob:mho1c} we conclude that
\begin{equation*}
\sum_{0 \leq \vert j \vert \leq \Knmalpha} \omega_j \E [  \vert \widehat{\fou{\ell}}_j - \fou{\ell}_j\vert^2 / \vert \fou{f}_j \vert^2 \1_{\Xi_1^\complement \cap \Xi_2} ] \lesssim \frac{1}{n},
\end{equation*}
independent of the actual value of $\Knmalpha$.
Finally, the third and last term on the right-hand side of~\eqref{eq:square3:3sums} can be bounded from above the same way after exploiting the definition of $\Xi_2$, and we obtain
\begin{equation*}
\sum_{0 \leq \vert j \vert \leq \Knmalpha} \omega_j \E [ \vert \widehat{\fou{\ell}}_j - \fou{\ell}_j \vert^2 \cdot \vert 1/\widehat{\fou{f}}_j - 1/\fou{f}_j\vert^2 \1_{\Xi_1^\complement \cap \Xi_2} ] \lesssim \frac{1}{n}.
\end{equation*}
Putting together the derived estimates, we obtain
\begin{equation*}
\E [\Vert \widehat \lambda_{\kpart} - \lambda \Vert^2 \1_{\Xi_1^\complement \cap \Xi_2}] \lesssim \frac{1}{n}.
\end{equation*}
The statement of the theorem follows by combining the upper  bounds for $\square_1$, $\square_2$, and $\square_3$.\qed

\subsection{Proof of Theorem~\ref{THM:FULLY:ADAP}}

Consider the event
\begin{equation*}\label{eq:def:mho2}
\Xi_3 \defeq \{ \Nalphamn \wedge \Malphamm \leq \Khatnm \leq \Nalphapn \wedge \Malphapm \}
\end{equation*}
in addition to the event $\Xi_1$ introduced in the proof of Theorem~\ref{THM:PART:ADAP} and the slightly redefined event $\Xi_2$ defined as
\begin{equation*}
\Xi_2 = \{ \forall 0 \leq \vert j \vert \leq \Malphapm : \vert 1 / \widehat{\fou{f}}_j - 1/ \fou{f}_j \vert \leq 1/(2 \vert \fou{f}_j\vert ) \text{ and } \vert \widehat{\fou{f}}_j \vert \geq 1/m \}.
\end{equation*}
Defining $\Xi = \Xi_1 \cap \Xi_2 \cap \Xi_3$, the identity $1=\1_{\Xi} + \1_{\Xi_2^\complement} + \1_{ \Xi_1^\complement \cap \Xi_2} + \1_{\Xi_1 \cap \Xi_2 \cap \Xi_3^\complement}$ motivates the decomposition
\begin{align*}
\E [ \Vert \widehat \lambda_\kfull - \lambda \Vert_\omega^2 ] &= \E [ \Vert \widehat \lambda_\kfull - \lambda \Vert_\omega^2 \1_{\Xi} ] + \E [ \Vert \widehat \lambda_\kfull - \lambda \Vert_\omega^2 \1_{\Xi_2^\complement} ]+ \E [ \Vert \widehat \lambda_\kfull - \lambda \Vert_\omega^2 \1_{\Xi_1^\complement  \cap \Xi_2} ]\\
&\hspace{+1em} + \E [ \Vert \widehat \lambda_\kfull - \lambda \Vert_\omega^2 \1_{\Xi_1 \cap \Xi_2 \cap \Xi_3^\complement}] \eqdef \square_1 + \square_2 + \square_3 + \square_4
\end{align*}
and we establish uniform upper risk bounds for the four terms on the right-hand side separately.

\noindent \emph{Uniform upper bound for $\square_1$}:
On $\Xi$ we have the estimate
$\frac{1}{4} \Delta_k \leq \widehat \Delta_k \leq \frac{9}{4} \Delta_k$, and thus
$$1/4 \left[\Delta_k \vee (k+4) \right] \leq  \widehat \Delta_k \vee (k+4)  \leq 9/4 \left[\Delta_k \vee (k+4) \right]$$
for all $k \in \{  0,\ldots, \Malphapm \}$.
This last estimate implies
\begin{align*}
\frac{2k+1}{4} &\Delta_k \frac{\log(\Delta_k \vee (k+4))}{\log(k+4)} \left(1 - \frac{\log 4}{\log(k+4)} \frac{\log(k+4)}{\log(\Delta_k \vee (k+4))} \right) \leq \widehat \delta_k\\
&\hspace{-2em}\leq \frac{9(2k+1)}{4} \Delta_k \frac{\log(\Delta_k \vee (k+4))}{\log(k+4)} \left(1+\frac{\log(9/4)}{\log(k+4)} \frac{\log(k+4)}{\log(\Delta_k \vee (k+4))} \right),
\end{align*}
from which we conclude $\frac{3}{100} \cdot \delta_k \leq \widehat \delta_k \leq \frac{17}{5} \cdot \delta_k$.
Putting $\pen_k = \frac{165}{2} (\widehat{\fou{\ell}}_0 \vee 1) \cdot \frac{\delta_k}{n}$, we observe that on $\Xi_2$ the estimate
\begin{equation*}
\pen_k \leq \hatpen_k \leq  340/3 \cdot \pen_k
\end{equation*}
holds for all $k \in  \{0,\ldots,\Malphapm \}$.
Note that on $\Xi$ we have $\widehat k \leq \Malphapm$ which using $\pen_{k \vee \widehat k} \leq \pen_k + \pen_{\widehat k}$ implies
\begin{align}
( \pen_{k \vee \widehat k} + \hatpen_k - \hatpen_{\widehat k}) \1_\Xi \leq 343/3 \cdot \pen_k \cdot \1_{\Xi} \label{eq:est:pen}.
\end{align}
Now, we can proceed by mimicking the derivation of~\eqref{eq:term:mho1} in the proof of Theorem~\ref{THM:PART:ADAP}.
More precisely, replacing the penalty term $\tildepen_k$ used in that proof by $\hatpen_k$, using the definition of $\pen_k$ above and~\eqref{eq:est:pen}, we obtain 
\begin{align*}
\E [ \Vert\widehat \lambda_{\widehat k} - \lambda\Vert_\omega^2 \, \1_{\Xi} ]  &\leq 7r \frac{\omega_k}{\gamma_k} + 40 \sum_{k=0}^{\Nalphapn} \E \hspace{-2pt}\left[\hspace{-3pt}\left( \sup_{t \in \Bc_k} \vert\langle \widetilde \Theta_{n \wedge m},t\rangle_\omega\vert^2- \frac{33(\fou{\ell}_0 \vee 1)\delta_k}{8n} \right)_+\right] \\
&\hspace{-1.5em}+ 32 \E[ \sup_{t \in \Bc_{\Knmalphap}} \vert \langle \check \Theta_{n \wedge m},t \rangle_\omega \vert^2]  + 4  \E [ (\pen_{k \vee \widehat k} + \hatpen_k - \hatpen_{\widehat k} ) \1_\Xi]   \\
&\hspace{-2em}\leq 7r\omega_k \gamma_k^{-1} + 40 \sum_{k=0}^{\Nalphapn} \E \left[ \left( \sup_{t \in \Bc_k} \vert \langle \widetilde \Theta_{n \wedge m},t \rangle_\omega \vert^2 - \frac{33(\fou{\ell}_0 \vee 1)\delta_k}{8n} \right)_+\right]   \\
&\hspace{1em}+ 32 \E [ \sup_{t \in \Bc_{\Knmalphap}} \vert\langle \check \Theta_{n \wedge m},t \rangle_\omega\vert^2 ]  + \frac{1372}{3} \, \pen_k.
\end{align*}
As in the proof of Theorem~\ref{THM:PART:ADAP}, the second and the third term are bounded applying Lemmata~\ref{l:ex:conc} (with $\delta_k^* \defeq \delta_k$ and $\Delta_k^* \defeq \Delta $) and~\ref{l:B4}, respectively.
Hence, by means of an obvious adaption of Statement~\ref{it:l:B2:a} in Lemma~\ref{l:B2} (with $N_n^\alpha$ replaced by $\Nalphapn$) and the estimates
\begin{equation*}
\Delta_k \leq d \Delta_k^\alpha, \quad \delta_k \leq d \zeta_d \delta_k^\alpha, \quad \frac{\delta_k}{\Delta_k} \geq 2k \zeta_d^{-1} \frac{\log(\Delta_k^\alpha \vee (k+4))}{\log(k+4)}
\end{equation*}
with $\zeta_d=\log(4d)/\log(4)$, we obtain in analogy to the way of proceeding in the proof of Theorem~\ref{THM:PART:ADAP} that
\begin{align}
\sup_{\lambda \in \Lambda_\gamma^r} \sup_{f \in \Fc_\alpha^d} \E [ \Vert \widehat \lambda_{\widehat k} - \lambda\Vert_\omega^2 \, \1_{\Xi} ] 
\lesssim \min_{0 \leq k \leq \Knmalpham} \max \left\lbrace \frac{\omega_k}{\gamma_k}, \frac{\delta_k^\alpha}{n} \right\rbrace  +  \Phi_m + \frac{1}{n}\label{eq:full:first}.
\end{align}

\noindent \emph{Upper bound for $\square_2$}: The uniform upper bound for $\square_2$ can be derived in analogy to the bound for $\square_2$ in the proof of Theorem~\ref{THM:PART:ADAP} using Assumption~\ref{ass:adap:fully} instead of Statement~\ref{it:l:B2:b} from Lemma~\ref{l:B2} in the proof of Lemma~\ref{l:prob:mho2c}.
Hence, we obtain
\begin{equation}\label{eq:full:second}
\sup_{\lambda \in \Lambda_\gamma^r} \sup_{f \in \Fc_\alpha^d} \E [ \Vert\widehat \lambda_\kpart - \lambda \Vert_\omega^2 \, \1_{\Xi_2^\complement}] \lesssim \frac{1}{m}.
\end{equation}

\noindent \emph{Upper bound for $\square_3$}:
The term $\square_3$ is bounded analogously to the bound established for $\square_3$ in the proof of Theorem~\ref{THM:PART:ADAP} (here, we do not have to exploit the additional Assumption~\ref{ass:adap:fully}), and we get
\begin{equation}\label{eq:full:third}
\sup_{\lambda \in \Lambda_\gamma^r} \sup_{f \in \Fc_\alpha^d} \E [\Vert \widehat \lambda_{\kpart} - \lambda \Vert^2 \1_{\Xi_1^\complement \cap \Xi_2}] \lesssim \frac{1}{n}.
\end{equation}

\noindent \emph{Upper bound for $\square_4$}: To find a uniform upper bound for the term $\square_4$, one can use exactly the same decompositions as in the proof of the uniform upper bound for $\square_3$ in Theorem~\ref{THM:PART:ADAP} by replacing the probability of $\Xi_1^\complement$ with the one of $\Xi_3^\complement$.
Doing this, we obtain by means of Lemma~\ref{l:prob:mho3c} that
\begin{equation}\label{eq:full:fourth}
\sup_{\lambda \in \Lambda_\gamma^r} \sup_{f \in \Fc_\alpha^d} 	\E [\Vert \widehat \lambda_{\kpart} - \lambda \Vert^2 \1_{\Xi_1 \cap \Xi_2 \cap \Xi_3^\complement}] \lesssim \frac{1}{m}.
\end{equation}
The result of the theorem now follows by combining~\eqref{eq:full:first},~\eqref{eq:full:second},~\eqref{eq:full:third} and~\eqref{eq:full:fourth}. \qed

\subsection{Auxiliary results}\label{s:aux:adap}

\begin{lem}\label{l:B2}
	Let Assumption~\ref{ass:seq} hold. Then the following assertions hold true.
	\begin{enumerate}[label=\alph*)]
		\item\label{it:l:B2:a} $\delta_{j}^\alpha/n \leq 1$ for all $n \in \N$ and $j \in \{ 0, \ldots, N_n^\alpha \}$,
		\item\label{it:l:B2:b} $\exp\left( - m \alpha_{M_m^\alpha}/(128d) \right) \leq C(d) m^{-5}$ for all $m \in \N$, and
		\item\label{it:l:B2:c} $\min_{1 \leq j \leq M_m^\alpha} \abs{\fou{f}_j}^2 \geq 2 m^{-1}$ for all $m \in \N$.
	\end{enumerate}
\end{lem}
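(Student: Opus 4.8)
All three assertions will follow by substituting the defining (in)equalities of $N_n^\alpha$ and $M_m^\alpha$ directly into the quantities to be bounded; the only point requiring a genuine (though short) argument is the logarithmic factor hidden in the definition of $\delta_k^\alpha$, which I expect to be the crux.

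For~\ref{it:l:B2:a}, the plan is as follows. First I would record the monotonicity bound $\Delta_j^\alpha\le\omega_j^+\alpha_j^{-1}$, valid for every $j\in\Nzero$: since $(\alpha_n)_{n\in\Nzero}$ is non-increasing by Assumption~\ref{ass:seq} and $(\omega_i^+)_i$ is non-decreasing by construction, $\omega_i\alpha_i^{-1}\le\omega_i^+\alpha_i^{-1}\le\omega_j^+\alpha_j^{-1}$ for all $0\le i\le j$, and one takes the maximum over $i$. The case $j=0$ is immediate since $\Delta_0^\alpha=1$ forces $\delta_0^\alpha=1\le n$. For $1\le j\le N_n^\alpha$, the definition of $N_n^\alpha$ yields $\alpha_j/(2j+1)\ge\log(n+3)\,\omega_j^+/n$, equivalently $(2j+1)\omega_j^+\alpha_j^{-1}\le n/\log(n+3)$, so by the monotonicity bound $(2j+1)\Delta_j^\alpha\le n/\log(n+3)\le n$, and in particular $\Delta_j^\alpha\le n$. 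It then remains to absorb the factor $\log(\Delta_j^\alpha\vee(j+3))/\log(j+3)$: if $\Delta_j^\alpha\le j+3$ it equals $1$ and we are done, while if $\Delta_j^\alpha>j+3$ then $j\ge1$, hence $(j+3)^{\log(n+3)}\ge4^{\log(n+3)}\ge e^{\log(n+3)}=n+3>n\ge\Delta_j^\alpha$ (using $e<4$), so $\log\Delta_j^\alpha\le\log(j+3)\log(n+3)$ and the factor is at most $\log(n+3)$; multiplying through gives $\delta_j^\alpha\le\frac{n}{\log(n+3)}\cdot\log(n+3)=n$. The same reasoning applies verbatim to the variant of $\delta_k^\alpha$ with $k+4$ from Subsection~\ref{sub:fully}.

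For~\ref{it:l:B2:b}, if $M_m^\alpha\ge1$ the definition of $M_m^\alpha$ gives $\alpha_{M_m^\alpha}\ge640\,d\,m^{-1}\log(m+1)$, whence $m\alpha_{M_m^\alpha}/(128d)\ge5\log(m+1)$ and $\exp(-m\alpha_{M_m^\alpha}/(128d))\le(m+1)^{-5}\le m^{-5}$; if $M_m^\alpha=0$ then $\alpha_{M_m^\alpha}=\alpha_0=1$ and one sets $C(d)=\sup_{m\in\N}m^5\exp(-m/(128d))<\infty$, the exponential dominating the polynomial. For~\ref{it:l:B2:c}, if $M_m^\alpha=0$ the minimum is over the empty index set and the claim is vacuous, while for $1\le j\le M_m^\alpha$ and any $f\in\Fc_\alpha^d$ one has $\abs{\fou{f}_j}^2\ge d^{-1}\alpha_j\ge640\,m^{-1}\log(m+1)\ge(640\log2)\,m^{-1}\ge2m^{-1}$.

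The step demanding care — the \emph{main obstacle} — is~\ref{it:l:B2:a}: one has to notice that $\Delta_j^\alpha$ is already bounded by $n$ through $(2j+1)\Delta_j^\alpha\le n$, so that the potentially dangerous ratio $\log\Delta_j^\alpha/\log(j+3)$ cannot exceed $\log(n+3)$ — precisely the slack produced by the $1/\log(n+3)$ factor built into the definition of $N_n^\alpha$. Everything else is bookkeeping.
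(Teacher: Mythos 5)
Your proof is correct and follows essentially the same route as the paper: all three parts are read off from the defining inequalities of $N_n^\alpha$ and $M_m^\alpha$, with the only real work being to absorb the $\log(\Delta_j^\alpha\vee(j+3))/\log(j+3)$ factor in part a), which both you and the paper handle by exploiting the $1/\log(n+3)$ slack built into $N_n^\alpha$ (the paper substitutes $\Delta_j^\alpha \le n/((2j+1)\log(n+3))$ directly into the logarithm, whereas you bound the ratio by $\log(n+3)$ via the power comparison $(j+3)^{\log(n+3)}\ge n$ — same idea, slightly different bookkeeping). Your treatment of the boundary cases $M_m^\alpha=0$ in parts b) and c) is a touch more explicit than the paper's, but the arguments coincide.
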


\begin{proof}
	\ref{it:l:B2:a} In case $N_n^\alpha = 0$, we have $\delta^\alpha_{N_n^\alpha}=1$ and there is nothing to show.
	Otherwise $0 < N_n^\alpha \leq n$, and by definition of $N_n^\alpha$ we have $(2j+1) \Delta^\alpha_{j} \leq n/\log(n+3)$ for $0 \leq j \leq N_n^\alpha$ which by the definition of $\delta_{j}^\alpha$ implies that
	\begin{equation*}
	\delta_{j}^\alpha \leq \frac{n}{\log(n+3)} \cdot \frac{\log (n/((2j+1) \log(n+3)) \vee (j + 3))}{\log(j +3)}.
	\end{equation*}
	We consider two cases: In the first case, $n/((2j+1) \log(n+3)) \vee (j +3) = j +3$. Then $n \geq 1$ directly implies the estimate $\delta_{j}^\alpha \leq n$.
	In the second case, we have $n/((2j+1) \log(n+3)) \vee (j+3) = n/((2j+1) \log(n+3))$ and therefrom
	\begin{equation*}
	\delta_{j}^\alpha \leq n \log(n)/(\log(n+3) \log(j +3)) \leq n,
	\end{equation*}
	and thus $\delta_{j}^\alpha \leq n$ in both cases. Division by $n$ yields the assertion of the lemma.
	\ref{it:l:B2:b} Note that, due to Assumption~\ref{ass:seq}, we have $M_m^\alpha > 0$ for all sufficiently large $m$ and that it is sufficient to show the desired inequality for such values of $m$.
	By the definition of $M_m^\alpha$, we have $\alpha_{M_m^\alpha} \geq 640d m^{-1} \cdot \log (m+1)$ which implies
	$\exp(-m \alpha_{M_m^\alpha}/(128d) ) \leq \exp(-5 \log m) = m^{-5}$,
	and the assertion follows.
	\ref{it:l:B2:c} Take note of the observation that
	\begin{equation*}
	\min_{1 \leq j \leq M_m^\alpha} \abs{\fou{f}_j}^2 \geq \min_{1 \leq j \leq M_m^\alpha} \frac{\alpha_j}{d} = \frac{\alpha_{M_m^\alpha}}{d} \geq 640m^{-1}\cdot \log (m+1)
	\end{equation*}
	and $640m^{-1}\cdot \log (m+1) \geq 2m^{-1}$ for all $m \geq 1$.
\end{proof}

\begin{lem}\label{l:ex:conc}
	Let $(\delta^*_k)_{k \in \N_0}$ and $(\Delta^*_k)_{k \in \N_0}$ be sequences such that for all $k \geq 1$,
	\begin{equation*}
	\delta_k^* \geq \sum_{0 \leq \jabs \leq k} \frac{\omega_j}{\vert \fou{f}_j \vert^2} \qquad \text{and} \qquad \Delta_k^* \geq \max_{0 \leq \vert j \vert \leq k} \frac{\omega_j}{\vert \fou{f}_j \vert^2}.
	\end{equation*}
	Then, for any $k \in \{1,\ldots,n \wedge m\}$, we have
	\begin{align*}
	\E \left[ \left( \sup_{t \in \Bc_{k}} \vert  \langle \widetilde \Theta_{n \wedge m}, t  \rangle \vert^2 -  \frac{33 \delta_k^* (\fou{\ell}_0 \vee 1) }{8n} \right)_+\right] &\\ &\hspace{-15em} \leq K_1 \left\lbrace  \frac{\norm{f} \norm{\lambda} \Delta_k^\ast}{n} \exp \left( - K_2 \cdot \frac{\delta_k^\ast}{\norm{f} \norm{\lambda} \Delta_k^\ast } \right)  + \frac{\delta_k^\ast}{n^2}  \exp \left( -K_3 \sqrt{n} \right) \right\rbrace,
	\end{align*}
	with positive numerical constants $K_1$, $K_2$, and $K_3$.
\end{lem}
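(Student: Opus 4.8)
The plan is to recognise $\chi_k\eqdef\sup_{t\in\Bc_k}\abs{\langle\widetilde\Theta_{n\wedge m},t\rangle_\omega}$ as the supremum of a centred empirical process built from the $n$ i.i.d.\ Poisson point processes $N_1,\dots,N_n$, and then to invoke the Talagrand-type concentration inequality for such processes from~\cite{kroll2016concentration}, the relevant consequence of which is recorded in the appendix. That inequality requires three inputs --- a bound on the mean of the process, a bound on its weak variance, and a substitute for the uniform envelope which a Poisson integral does not possess --- and these will be supplied by Campbell's theorem together with an elementary Parseval/Cauchy--Schwarz computation in sequence space.

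\textbf{Step 1 (reduction to an empirical process).} Since $\fou{\ell}_j=\fou{\lambda}_j\fou{f}_j$ and $\E\widehat{\fou{\ell}}_j=\fou{\ell}_j$, the Fourier coefficients of $\widetilde\Theta_{n\wedge m}$ are $\widehat{\fou{\ell}}_j/\fou{f}_j-\fou{\lambda}_j=\xi_j/\fou{f}_j$ for $0\le\abs{j}\le n\wedge m$, where $\xi_j=\tfrac1n\sum_{i=1}^n Z_{ij}$ with $Z_{ij}=\int_0^1\e_j(-s)\dd N_i(s)-\fou{\ell}_j$ centred and i.i.d.\ in $i$. For $k\le n\wedge m$, maximising the $\omega$-inner product over the $\omega$-unit ball $\Bc_k$ amounts to taking an $\omega$-norm, whence
\[
\chi_k^2=\sum_{0\le\abs{j}\le k}\frac{\omega_j}{\abs{\fou{f}_j}^2}\abs{\xi_j}^2=\Bigl(\sup_{t\in\Bc_k}\Bigl|\tfrac1n\sum_{i=1}^n\nu_t(N_i)\Bigr|\Bigr)^2,\qquad \nu_t(N_i)=\int_0^1 g_t(s)\dd N_i(s)-\int_0^1 g_t(s)\ell(s)\dd s,
\]
with $g_t(s)=\sum_{0\le\abs{j}\le k}\omega_j\,\overline{\fou{t}_j}\,\fou{f}_j^{-1}\,\e_j(-s)$. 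Restricting to the countable dense subclass of $\Bc_k$ with rational coefficients turns $\sup_t\abs{\tfrac1n\sum_i\nu_t(N_i)}$ into a separable centred empirical process, and $\E\nu_t(N_1)=0$ for each $t$ by Campbell's theorem.

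\textbf{Step 2 (the three inputs).} By Campbell's theorem (cf.\ the computation in Lemma~\ref{l:upper}) we have $\var(\xi_j)=\fou{\ell}_0/n$, hence $\E\chi_k^2=\tfrac{\fou{\ell}_0}{n}\sum_{0\le\abs{j}\le k}\omega_j\abs{\fou{f}_j}^{-2}\le\tfrac{(\fou{\ell}_0\vee1)\delta_k^*}{n}\eqdef H^2$; the subtracted threshold $\tfrac{33}{8}H^2$ is thus a fixed multiple of $H^2$, the constant $\tfrac{33}{8}$ being chosen large enough to enter the exponential regime of the concentration inequality and the factor $\fou{\ell}_0\vee1$ only serving to render the bound compatible with the $\widehat{\fou{\ell}}_0\vee1$ occurring in the penalty. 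For the weak variance, Campbell's theorem gives $\var(\nu_t(N_1))=\int_0^1\abs{g_t(s)}^2\ell(s)\dd s\le\norm{\ell}_\infty\norm{g_t}^2$; since $\ell=\lambda\star f$ implies $\norm{\ell}_\infty\le\norm{\lambda}\norm{f}$ (circular Cauchy--Schwarz) and Parseval yields $\norm{g_t}^2=\sum_{0\le\abs{j}\le k}\omega_j^2\abs{\fou{t}_j}^2\abs{\fou{f}_j}^{-2}\le\Delta_k^*\norm{t}_\omega^2\le\Delta_k^*$, we obtain $\sup_{t\in\Bc_k}\var(\nu_t(N_1))\le\norm{\lambda}\norm{f}\Delta_k^*\eqdef v$, which is exactly the factor appearing inside and in front of the first exponential. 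Finally, because $\int g_t\dd N_1$ has no finite uniform envelope, the Poisson-tailored inequality of~\cite{kroll2016concentration} replaces the classical envelope term by one arising from a truncation of the total point count $\sum_{i=1}^n N_i([0,1))\sim\mathrm{Poisson}(n\fou{\ell}_0)$; here the relevant size parameter is $\sup_{t\in\Bc_k}\norm{g_t}_\infty\le\bigl(\sum_{0\le\abs{j}\le k}\omega_j\abs{\fou{f}_j}^{-2}\bigr)^{1/2}\le\sqrt{\delta_k^*}$, and it is this truncation that produces the $n$-only term $\tfrac{\delta_k^*}{n^2}\exp(-K_3\sqrt n)$.

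\textbf{Step 3 and main obstacle.} Feeding $H$, $v$ and $\sqrt{\delta_k^*}$ into the concentration consequence and simplifying --- using $nH^2\ge\delta_k^*$ so that the first exponent $nH^2/v$ dominates $\delta_k^*/v$, and using that $H/\sqrt{\delta_k^*}$ is bounded below by a multiple of $n^{-1/2}$ for the second --- yields the asserted bound with numerical constants $K_1,K_2,K_3$. No individual step is difficult; the delicate part is the bookkeeping of constants, that is, checking that $\tfrac{33}{8}H^2$ genuinely exceeds the multiple of $\E\chi_k^2$ (equivalently of $(\E\chi_k)^2$) demanded by the abstract inequality and that $v$ and $\sqrt{\delta_k^*}$ enter in exactly the normalisation it expects. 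The one genuinely non-routine ingredient --- a Bernstein/Talagrand inequality for suprema of empirical processes driven by Poisson point processes, which cannot be transferred from the density-estimation or deconvolution literature precisely because the summands are unbounded --- is imported wholesale from~\cite{kroll2016concentration}, so here it only remains to supply it with the correct inputs.
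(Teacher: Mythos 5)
Your proposal is essentially the approach the paper takes: the paper's (terse) proof simply invokes Proposition~C.1 of~\cite{kroll2016concentration} with the choices $c(\epsilon)=4(1+2\epsilon)$, $\epsilon=1/64$ (so that $c(\epsilon)=33/8$), $M_1^2=\delta_k^\ast$, $H^2=\delta_k^\ast(\fou{\ell}_0\vee 1)/n$ and $\upsilon=\Vert\lambda\Vert\Vert f\Vert\Delta_k^\ast(\fou{\ell}_0\vee 1)$, and your Step~2 supplies exactly the Campbell/Parseval computations that justify these quantities as mean, envelope, and weak-variance parameters. The only bookkeeping discrepancy is that you take the weak-variance parameter to be $\Vert\lambda\Vert\Vert f\Vert\Delta_k^\ast$ without the extra factor $(\fou{\ell}_0\vee 1)$ that the paper includes; since that factor cancels in the exponent $nH^2/\upsilon$ and is bounded by $\sqrt r\vee 1$ over $\Lambda_\gamma^r$, this does not change the conclusion.
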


\begin{proof}
	The proof is a combination of the proofs of Lemma~A.1 in~\cite{kroll2016concentration} (which deals with the case $\omega \equiv 1$) and Lemma~A.4 in~\cite{johannes2013adaptive}.
	More precisely, one can apply Proposition~C.1 in~\cite{kroll2016concentration} with $c(\epsilon)$ from that statement replaced with $c(\epsilon)=4(1+2\epsilon)$ (this makes the proposition applicable also for complex-valued functions), $M_1^2=\delta_k^\ast$, $H^2 = \frac{\delta_k^\ast}{n} (\fou{\ell}_0 \vee 1)$, $\upsilon \defeq \Vert \lambda \Vert \Vert f\Vert \Delta_k^\ast (\fou{\ell}_0 \vee 1)$ and setting $\epsilon = 1/64$.
\end{proof}

\begin{lem}\label{l:B4}
	Let $m \in \N$, $k \in \Nzero$. Then
	$$\sup_{\lambda \in \Lambda_\gamma^r} \E [ \sup_{t \in \Bc_k} \vert\langle \check \Theta_{n \wedge m},t \rangle_\omega\vert^2 ] \leq C(d,r) \cdot \Phi_m.$$
\end{lem}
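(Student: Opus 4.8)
The plan is to observe that, after taking the expectation, $\sup_{t \in \Bc_k} \abs{\langle \check \Theta_{n \wedge m}, t \rangle_\omega}^2$ is exactly the quantity $\square_2$ already controlled in the proof of Theorem~\ref{THM:U:NM:I:II}, up to the range of summation and a harmless additional term. First I would use the dual characterisation of the $\omega$-norm: for any $h \in \LL^2$, $\sup_{t \in \Bc_k} \abs{\langle h, t \rangle_\omega}^2 = \sum_{0 \leq \abs j \leq k} \omega_j \abs{\fou h_j}^2$ (Cauchy--Schwarz with respect to $\langle \cdot,\cdot\rangle_\omega$, the supremum being attained on $\Bc_k$). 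Applying this to $h = \check \Theta_{n \wedge m}$, recalling that $\check \Theta_{n \wedge m}$ is supported on the frequencies $\abs j \leq n \wedge m$ with $j$-th Fourier coefficient $\tfrac{\fou \ell_j}{\widehat{\fou f}_j}\1_{\Omega_j} - \fou \lambda_j = \fou \lambda_j\bigl(\tfrac{\fou f_j}{\widehat{\fou f}_j}\1_{\Omega_j} - 1\bigr)$ (using $\fou \ell_j = \fou \lambda_j \fou f_j$), and noting that the $j=0$ term vanishes since $\widehat{\fou f}_0 = \fou f_0 = 1$, I obtain $\sup_{t \in \Bc_k} \abs{\langle \check \Theta_{n \wedge m}, t \rangle_\omega}^2 = \sum_{1 \leq \abs j \leq k \wedge (n\wedge m)} \omega_j \abs{\fou \lambda_j}^2 \, \bigl\lvert \tfrac{\fou f_j}{\widehat{\fou f}_j}\1_{\Omega_j} - 1 \bigr\rvert^2$.

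Next I would take expectations term by term and bound $\E\bigl[\bigl\lvert \tfrac{\fou f_j}{\widehat{\fou f}_j}\1_{\Omega_j} - 1 \bigr\rvert^2\bigr]$ exactly as the corresponding factor in $\square_2$ was bounded in the proof of Theorem~\ref{THM:U:NM:I:II}: splitting according to $\1_{\Omega_j}$, on $\Omega_j$ one writes $\abs{\widehat{\fou f}_j}^{-2} \leq \abs{\fou f_j}^{-2}\bigl(2 + 2m\abs{\widehat{\fou f}_j - \fou f_j}^2\bigr)$, so that Theorem~2.10 in~\cite{petrov1995limit} (which gives $\E[\abs{\widehat{\fou f}_j - \fou f_j}^4] \leq C m^{-2}$ for a numerical $C$, since $\abs{\e_j(-Y_i)} \leq 1$), assertion~\ref{it:l:u:b} of Lemma~\ref{l:upper}, and $f \in \Fc_\alpha^d$ produce a bound of order $d/(m\alpha_j)$, while the trivial estimate $\1_{\Omega_j}\abs{\widehat{\fou f}_j}^{-2} \leq m$ together with Lemma~\ref{l:upper}~\ref{it:l:u:b} produces a bound of order $1$; the remaining piece $\P(\Omega_j^\complement) \leq \min\{1, 4d/(m\alpha_j)\}$ is Lemma~\ref{l:upper}~\ref{it:l:u:c}. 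Altogether, using $d \geq 1$, this gives $\E\bigl[\bigl\lvert \tfrac{\fou f_j}{\widehat{\fou f}_j}\1_{\Omega_j} - 1 \bigr\rvert^2\bigr] \lesssim d\min\{1, (m\alpha_j)^{-1}\}$, hence $\E\bigl[\sup_{t \in \Bc_k} \abs{\langle \check \Theta_{n \wedge m}, t \rangle_\omega}^2\bigr] \lesssim d\sum_{1 \leq \abs j \leq k} \omega_j \abs{\fou \lambda_j}^2 \min\{1, (m\alpha_j)^{-1}\}$.

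The final step is pure bookkeeping on the weight sequences: for every $j \in \N$ (and, by symmetry, for $j \leq -1$) the definition $\Phi_m = \max_{k \in \N} \tfrac{\omega_k}{\gamma_k}\min(1,(m\alpha_k)^{-1})$ gives $\tfrac{\omega_j}{\gamma_j}\min\{1,(m\alpha_j)^{-1}\} \leq \Phi_m$, whence $\omega_j \min\{1,(m\alpha_j)^{-1}\} \leq \gamma_j \Phi_m$ and the previous display is at most $c\,d\,\Phi_m \sum_{j \in \Z} \gamma_j \abs{\fou \lambda_j}^2 = c\,d\,\Phi_m \norm{\lambda}_\gamma^2 \leq c\,d\,r \cdot \Phi_m$ for an absolute constant $c$, uniformly over $\lambda \in \Lambda_\gamma^r$ and $f \in \Fc_\alpha^d$; this is of the required form $C(d,r)\cdot \Phi_m$ with $C(d,r)$ proportional to $dr$. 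I do not expect a genuine obstacle: the lemma is essentially a repackaging of the $\square_2$-estimate of Theorem~\ref{THM:U:NM:I:II} at the level of the $\omega$-operator norm. The only two points needing a little care are the vanishing of the zero mode (which is what lets one avoid the spurious $(1+\gamma_1/\omega_1)$ factor appearing in the $\square_2$ computation) and the inequality $\tfrac{\omega_j}{\gamma_j}\min\{1,(m\alpha_j)^{-1}\} \leq \Phi_m$, which rests on $\gamma_j \geq 1$ from Assumption~\ref{ass:seq} and on $\Phi_m$ being a maximum over $j \in \N$.
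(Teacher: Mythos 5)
Your proof is correct. The paper omits the proof of Lemma~\ref{l:B4}, deferring to Lemma~A5 of~\cite{johannes2013adaptive}, so there is no in-paper argument to compare against; your self-contained proof fills the gap in the natural way and uses only ingredients already present in the paper. The structure is sound: the pathwise identity $\sup_{t \in \Bc_k} \abs{\langle \check \Theta_{n \wedge m}, t \rangle_\omega}^2 = \sum_{0 \leq \abs{j} \leq k \wedge (n \wedge m)} \omega_j \abs{\fou{\lambda}_j}^2 \abs{\fou{f}_j/\widehat{\fou{f}}_j \1_{\Omega_j} - 1}^2$ follows from Cauchy--Schwarz with equality, the $j=0$ term genuinely vanishes (since $\widehat{\fou{f}}_0 = \fou{f}_0 = 1$ and $\Omega_0$ is sure for $m \geq 1$), the per-frequency bound $\E\bigl[\abs{\fou{f}_j/\widehat{\fou{f}}_j \1_{\Omega_j} - 1}^2\bigr] \lesssim d \min\{1, (m\alpha_j)^{-1}\}$ is exactly the recycled $\square_2$ estimate from Theorem~\ref{THM:U:NM:I:II} combined with Lemma~\ref{l:upper}~\ref{it:l:u:c}, and the closing step $\omega_j \min\{1, (m\alpha_j)^{-1}\} \leq \gamma_j \Phi_m$ for $\abs{j} \geq 1$ (which uses symmetry of the sequences and the definition of $\Phi_m$ as a maximum over $k \in \N$) correctly converts the sum into $d\,\Phi_m \norm{\lambda}_\gamma^2 \leq d r \Phi_m$, uniformly over $\Fc_\alpha^d$. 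The side remark that the vanishing zero mode spares you the $(1 + \gamma_1/\omega_1)$ factor appearing in the paper's $\square_2$ computation is accurate but cosmetic, since that factor is in any case absorbed into $C(d,r)$.
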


The proof follows along the lines of the proof of Lemma~A5 in~\cite{johannes2013adaptive} and is thus omitted.

\begin{lem}\label{l:prob:mho1c}
	Let Assumption~\ref{ass:seq} hold and consider the event $\Xi_1$ defined in Theorem~\ref{THM:PART:ADAP}.
	Then, for any $n \in \N$, $\P(\Xi_1^\complement) \leq 2 \exp(-Cn)$
	with a numerical constant $C>0$.
\end{lem}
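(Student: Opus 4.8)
The plan is to use the fact that the law of $\widehat{\fou{\ell}}_0$ is known explicitly. Since $\e_0 \equiv 1$, we have $n\widehat{\fou{\ell}}_0 = \sum_{i=1}^n N_i([0,1))$, where the $N_i$ are i.i.d.\ Poisson point processes with intensity $\ell = \lambda \star f$; hence each total count $N_i([0,1))$ is Poisson distributed with parameter $\int_0^1 \ell = \fou{\ell}_0 = \fou{\lambda}_0 \fou{f}_0 = \fou{\lambda}_0$ (using $\fou{f}_0 = 1$), and this parameter is nonnegative because $\lambda \geq 0$. Writing $\mu \defeq \fou{\ell}_0 = \fou{\lambda}_0 \geq 0$, the variable $n\widehat{\fou{\ell}}_0$ is therefore Poisson distributed with mean $n\mu$. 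Note that this distribution does not depend on $f$, so the bound obtained below will be uniform over $\Fc_\alpha^d$ (and, as the argument shows, over $\Lambda_\gamma^r$ as well).

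First I would reduce $\Xi_1^\complement$ to a deviation event. Since $x \mapsto x \vee 1$ is $1$-Lipschitz, one has $\{\, |\widehat{\fou{\ell}}_0 \vee 1 - \mu \vee 1| \leq \tfrac12(\mu \vee 1) \,\} \subseteq \Xi_1$, and moreover $|\widehat{\fou{\ell}}_0 \vee 1 - \mu \vee 1| \leq |\widehat{\fou{\ell}}_0 - \mu|$, which together give
\begin{equation*}
\P(\Xi_1^\complement) \leq \P\big( |\widehat{\fou{\ell}}_0 - \mu| > \tfrac12(\mu \vee 1) \big).
\end{equation*}
The crucial observation is that the threshold $\tfrac12(\mu \vee 1)$ is at least $\tfrac12 \max(\mu,1)$, which is what will make the exponent in the forthcoming tail bounds proportional to $n$ rather than merely to $n\mu$.

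Next I would apply the classical exponential tail inequalities for the Poisson distribution to the two halves of this event, writing $X \defeq n\widehat{\fou{\ell}}_0 \sim \mathrm{Poisson}(n\mu)$. For the upper deviation I would use the Bernstein-type bound $\P(X \geq n\mu + t) \leq \exp(-t^2/(2(n\mu + t/3)))$ with $t = \tfrac{n}{2}(\mu \vee 1)$; distinguishing the regimes $\mu \geq 1$ (so $t = n\mu/2$) and $\mu < 1$ (so $t = n/2$ and $n\mu \leq n$) gives a bound $\exp(-c_1 n)$ with an absolute constant $c_1 > 0$. For the lower deviation one may assume $\tfrac12(\mu \vee 1) \leq \mu$, since otherwise the event is empty as $\widehat{\fou{\ell}}_0 \geq 0$; then the Chernoff bound $\P(X \leq n\mu - t) \leq \exp(-t^2/(2n\mu))$ with the same $t$, again split on $\mu \geq 1$ versus $\mu < 1$, yields $\exp(-c_2 n)$. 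Summing the two contributions gives $\P(\Xi_1^\complement) \leq 2\exp(-Cn)$ with $C = \min(c_1,c_2)$, uniformly in $\lambda \in \Lambda_\gamma^r$ and $f \in \Fc_\alpha^d$.

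I expect no genuine obstacle here: the only thing that requires a little care is the bookkeeping around the truncation at $1$ together with the two regimes $\mu \geq 1$ and $\mu < 1$, and choosing the versions of the Poisson tail inequalities so that, after substituting $t = \tfrac{n}{2}(\mu \vee 1)$, the exponents come out uniformly proportional to $n$.
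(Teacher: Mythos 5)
Your proof is correct and takes essentially the same route as the paper: identify $n\widehat{\fou{\ell}}_0 = \sum_i N_i([0,1))$ as Poisson with mean $n\fou{\ell}_0$ and apply Chernoff-type bounds for the Poisson distribution to the two one-sided deviation events, noting that the threshold $\tfrac{n}{2}(\fou{\ell}_0\vee 1)$ makes the exponent uniformly proportional to $n$. Your Lipschitz reduction of the truncated quantity $\widehat{\fou{\ell}}_0 \vee 1$ to the raw deviation $|\widehat{\fou{\ell}}_0 - \fou{\ell}_0|$ is a small cosmetic variant, but the underlying concentration tool is the same one the paper invokes.
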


\begin{proof}
	Note that
	$
	\P(\Xi_1^\complement) = \P(\widehat{\fou{\ell}}_0 \vee 1 < (\fou{\ell}_0 \vee 1)/2) + \P(\widehat{\fou{\ell}}_0 \vee 1 > 2 (\fou{\ell}_0 \vee 1)),
	$
	and the two terms on the right-hand side can be bounded by Chernoff bounds for Poisson distributed random variables (see~\cite{mitzenmacher2017probability}, Theorem~5.4) which yields the result.
\end{proof}

\begin{lem}\label{l:prob:mho2c}
	Let Assumption~\ref{ass:seq} hold and consider the event $\Xi_2$ defined in the proof of Theorem~\ref{THM:PART:ADAP}.
	Then, for any $m \in \N$, $\P(\Xi_2^\complement) \leq C(d) m^{-4}$.
\end{lem}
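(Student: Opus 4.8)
The plan is to bound $\Xi_2^\complement$ by a union of large-deviation events for the empirical Fourier coefficients $\widehat{\fou{f}}_j$, to control each of these by Bernstein's inequality, and to exploit that on the range $\abs{j} \leq M_m^\alpha$ the coefficient $\abs{\fou{f}_j}$ is bounded below of order $\sqrt{\log(m+1)/m}$.

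First I would record the deterministic implication: if $\abs{\widehat{\fou{f}}_j - \fou{f}_j} \leq \abs{\fou{f}_j}/3$, then $\abs{\widehat{\fou{f}}_j} \geq \tfrac{2}{3}\abs{\fou{f}_j}$ and hence
\begin{equation*}
\abs{\widehat{\fou{f}}_j^{-1} - \fou{f}_j^{-1}} = \frac{\abs{\widehat{\fou{f}}_j - \fou{f}_j}}{\abs{\widehat{\fou{f}}_j}\,\abs{\fou{f}_j}} \leq \frac{\abs{\fou{f}_j}/3}{(2\abs{\fou{f}_j}/3)\,\abs{\fou{f}_j}} = \frac{1}{2\abs{\fou{f}_j}}.
\end{equation*}
For $j=0$ one has $\widehat{\fou{f}}_0 = \fou{f}_0 = 1$, so both conditions defining $\Xi_2$ hold trivially; for $1 \leq \abs{j} \leq M_m^\alpha$, Statement~\ref{it:l:B2:c} of Lemma~\ref{l:B2} gives $\abs{\fou{f}_j}^2 \geq 2/m$, so on the above event $\abs{\widehat{\fou{f}}_j} \geq \tfrac{2}{3}\sqrt{2/m} \geq 1/m$ whenever $m \geq 2$ (while for $m=1$ one has $M_1^\alpha \leq 0$ and $\P(\Xi_2^\complement)=0$, so nothing is to be shown). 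Consequently
\begin{equation*}
\Xi_2^\complement \subseteq \bigcup_{1 \leq \abs{j} \leq M_m^\alpha} \bigl\{ \abs{\widehat{\fou{f}}_j - \fou{f}_j} > \abs{\fou{f}_j}/3 \bigr\},
\end{equation*}
and a union bound over the at most $2m+1$ indices reduces the claim to showing $\P(\abs{\widehat{\fou{f}}_j - \fou{f}_j} > \abs{\fou{f}_j}/3) \lesssim m^{-5}$ uniformly over these $j$ and over $f \in \Fc_\alpha^d$.

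For the remaining concentration step I would fix such a $j$ and apply Bernstein's inequality to $\widehat{\fou{f}}_j = m^{-1}\sum_{i=1}^m \e_j(-Y_i)$: after centring, the summands are i.i.d., bounded by $2$ in modulus and of variance at most one, so splitting into real and imaginary parts yields $\P(\abs{\widehat{\fou{f}}_j - \fou{f}_j} > t) \leq 4\exp(-c\,mt^2)$ for a numerical constant $c>0$ and all $0 < t \leq 1/3$ (note $\abs{\fou{f}_j}\leq 1$, hence $t = \abs{\fou{f}_j}/3 \leq 1/3$). The point is that for $1 \leq \abs{j} \leq M_m^\alpha$ the definition of $M_m^\alpha$ together with $f \in \Fc_\alpha^d$ forces $\abs{\fou{f}_j}^2 \geq \alpha_j/d \geq 640\,m^{-1}\log(m+1)$, so $mt^2 = m\abs{\fou{f}_j}^2/9 \geq \tfrac{640}{9}\log(m+1)$ and therefore $\P(\abs{\widehat{\fou{f}}_j - \fou{f}_j} > t) \leq 4(m+1)^{-c'}$ with a power $c'$ strictly larger than $5$; summing over the at most $2m+1$ indices then gives $\P(\Xi_2^\complement) \lesssim m\,(m+1)^{-c'} \lesssim m^{-4}$, with the implicit constant depending only on $d$.

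There is no genuine obstacle here; the only thing to watch is the bookkeeping of the numerical constant in Bernstein's inequality, which must confirm that the exponent produced by the bound $\abs{\fou{f}_j}^2 \geq 640\,m^{-1}\log(m+1)$ strictly exceeds $5$, so that after summing over the $\mathcal{O}(m)$ relevant indices one still retains a bound of order $m^{-4}$. The large constant $640$ built into the definition of $M_m^\alpha$ is chosen precisely to leave this margin.
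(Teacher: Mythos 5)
The proposal is correct and follows the standard route the paper is alluding to (the paper omits the proof, referring to Lemma~A6 of Johannes and Schwarz (2013), which is precisely a union bound plus a Bernstein-type concentration inequality for the empirical Fourier coefficients, exploiting the lower bound $\abs{\fou{f}_j}^2 \gtrsim m^{-1}\log(m+1)$ forced by the definition of $M_m^\alpha$). Your bookkeeping is sound: the threshold $\abs{\fou{f}_j}/3$ is the right deterministic trigger for both conditions in $\Xi_2$, the $j=0$ and $m=1$ edge cases are handled correctly, and you correctly note that plain Hoeffding with the crude bound $\abs{\Re(\e_j(-Y_i)-\fou{f}_j)}\leq 2$ would land just short of exponent $5$, so either Bernstein (using the variance bound $\leq 1$) or Hoeffding with the sharper range observation (each of $\Re\e_j(-Y_i)$, $\Im\e_j(-Y_i)$ takes values in an interval of length $2$) is needed to clear the margin after the union bound over $\mathcal{O}(m)$ indices.
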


The proof follows along the lines of the proof of Lemma~A6 in~\cite{johannes2013adaptive} and is thus omitted.

\begin{lem}\label{l:prob:mho3c}
	Let Assumptions~\ref{ass:seq} and~\ref{ass:adap:fully} hold.
	The event $\Xi_3$ defined in~\eqref{eq:def:mho2} satisfies $\P(\Xi_3^\complement) \leq C(\alpha, d) m^{-4}$ for all $m \in \N$.
\end{lem}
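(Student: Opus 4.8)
The plan is to exhibit a high-probability event $\Xi$ contained in $\Xi_3$ on which the data-driven cut-offs $\widehat N_n$ and $\widehat M_m$ can be compared, index by index, with their deterministic analogues, and then to bound $\P(\Xi^\complement)$. I would take $\Xi := \Xi_2 \cap \Xi_0$, where $\Xi_2$ is the event introduced in the proof of Theorem~\ref{THM:FULLY:ADAP} — on which $|1/\widehat{\fou f}_j - 1/\fou f_j| \le 1/(2|\fou f_j|)$, hence $\tfrac23|\fou f_j| \le |\widehat{\fou f}_j| \le 2|\fou f_j|$, for all $|j| \le \Malphapm$ — and $\Xi_0 := \{|\widehat{\fou f}_{\Malphapm+1}|^2 < m^{-1}\log m\}$ (to be read as the whole space when $\Malphapm = m$). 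Using $d^{-1} \le |\fou f_j|^2/\alpha_j \le d$ from the definition of $\Fc_\alpha^d$, on $\Xi_2$ one gets $\tfrac49 d^{-1}\alpha_j \le |\widehat{\fou f}_j|^2 \le 4d\alpha_j$ for $|j| \le \Malphapm$; the numerical factors $\tfrac49$ and $4$ are exactly what the factor $4d$ separating the thresholds in the definitions of $\widehat N_n,\widehat M_m$ from those of $\Nalphamn,\Nalphapn,\Malphamm,\Malphapm$ is designed to absorb.

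Next I would carry out the deterministic comparison on $\Xi$. For $1 \le j \le \Knmalpham$ — note $\Knmalpham \le \Malphamm \le \Malphapm$ and $\Knmalpham \le \Nalphamn$ — the defining inequalities $\alpha_j/(2j+1) \ge 4d\log(n+4)\omega_j^+/n$ and $\alpha_j \ge 4dm^{-1}\log m$ hold, and combining them with $|\widehat{\fou f}_j|^2 \ge \tfrac49 d^{-1}\alpha_j$ shows that neither the infimum condition defining $\widehat N_n$ nor that defining $\widehat M_m$ is met at $j$; hence $\Khatnm = \widehat N_n \wedge \widehat M_m \ge \Knmalpham$. For the reverse inequality I would split into cases: if $\Nalphapn < \Malphapm$, then at $j = \Nalphapn + 1 \le \Malphapm$ the inequality $\alpha_j/(2j+1) < \log(n+4)\omega_j^+/(4dn)$ together with $|\widehat{\fou f}_j|^2 \le 4d\alpha_j$ shows the infimum condition for $\widehat N_n$ is met, so $\widehat N_n \le \Nalphapn = \Knmalphap$; otherwise $\Malphapm \le \Nalphapn$ and one uses $\Xi_0$ at $j = \Malphapm + 1$ together with $4d\alpha_{\Malphapm+1} < m^{-1}\log m$ to get $\widehat M_m \le \Malphapm = \Knmalphap$; either way $\Khatnm \le \Knmalphap$, so $\Xi \subseteq \Xi_3$. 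The part I expect to require the most care is the verification that every index queried above is $\le \Malphapm + 1 \le m+1$ — in the subcase where the $\widehat N_n$-comparison is actually invoked one is in the situation $\Nalphapn < \Malphapm$, so $\Nalphapn + 1 \le \Malphapm \le m$ — since this is exactly what makes the final bound depend on $m$ and not on $n$; the remaining points (the truncations ``$\wedge n$'', ``$\wedge m$'' and the convention for infima over empty sets) are routine bookkeeping but must be checked.

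Finally I would bound $\P(\Xi^\complement) \le \P(\Xi_2^\complement) + \P(\Xi_0^\complement)$. The first term is $\le C(\alpha,d)m^{-4}$ by the argument used to control $\P(\Xi_2^\complement)$ in the proof of Theorem~\ref{THM:FULLY:ADAP} (an adaptation of Lemma~\ref{l:prob:mho2c}): a union bound over $|j| \le \Malphapm$ of Bernstein/Chernoff estimates for the centred bounded i.i.d.\ sums $\widehat{\fou f}_j - \fou f_j = m^{-1}\sum_{i=1}^m(\e_j(-Y_i) - \fou f_j)$, using $\var(\widehat{\fou f}_j) \le 1/m$ from Lemma~\ref{l:upper}\ref{it:l:u:b}, with the indices near $\Malphapm$ (where $\alpha_j$ is smallest, and $\alpha_{\Malphapm} \ge \alpha_{\Malphapm+1}$) handled by Assumption~\ref{ass:adap:fully}. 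For the second term, on $\Xi_0^\complement$ one has $|\widehat{\fou f}_{\Malphapm+1}| \ge (m^{-1}\log m)^{1/2}$ while $|\fou f_{\Malphapm+1}| \le (d\alpha_{\Malphapm+1})^{1/2} < \tfrac12(m^{-1}\log m)^{1/2}$, so a further Bernstein/Chernoff estimate for $\widehat{\fou f}_{\Malphapm+1} - \fou f_{\Malphapm+1}$, combined with Assumption~\ref{ass:adap:fully} applied to the resulting $\exp(-c\,m\alpha_{\Malphapm+1}/d)$-type factor, gives $\P(\Xi_0^\complement) \le C(\alpha,d)m^{-4}$. Summing the two estimates yields $\P(\Xi_3^\complement) \le C(\alpha,d)m^{-4}$; the overall scheme parallels that for the analogous cut-off lemmas in~\cite{johannes2013adaptive}.
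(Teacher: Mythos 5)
Your reconstruction is sound: the deterministic inclusion $\Xi_2 \cap \Xi_0 \subseteq \Xi_3$ (obtained from $\tfrac49 d^{-1}\alpha_j \le |\widehat{\fou f}_j|^2 \le 4d\alpha_j$ on $\Xi_2$ for $|j|\le\Malphapm$, with the factor $4d$ in the definitions of $\Nalphamn,\Nalphapn,\Malphamm,\Malphapm$ absorbing the $\tfrac49$ and $4$), followed by a union bound and Bernstein/Chernoff estimate of $\P(\Xi_2^\complement)+\P(\Xi_0^\complement)$ using Assumption~\ref{ass:adap:fully} to convert the exponential in $m\alpha_{\Malphapm+1}$ into a polynomial tail, is exactly the scheme of Lemma~A7 in~\cite{johannes2013adaptive} to which the paper defers. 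One point worth making fully explicit, as it is where the argument is tightest: on $\Xi_0^\complement$ the deviation $|\widehat{\fou f}_{\Malphapm+1}-\fou f_{\Malphapm+1}|$ exceeds $t = \tfrac12(m^{-1}\log m)^{1/2}$, and the raw Bernstein bound $\exp(-c m t^2)=m^{-c/4}$ alone does not reach $m^{-4}$ for the small Bernstein constant $c$; one must use the further inequality $t^2 > d\,\alpha_{\Malphapm+1}$ (from $4d\alpha_{\Malphapm+1}<m^{-1}\log m$) to obtain a bound of the exact form addressed by Assumption~\ref{ass:adap:fully}, as you indicate with the phrase ``combined with Assumption~\ref{ass:adap:fully}''.
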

The proof follows along the lines of the proof of Lemma~A7 in~\cite{johannes2013adaptive} and is thus omitted.

\printbibliography

\end{document}